\documentclass[10pt,a4paper]{article}
\usepackage[utf8]{inputenc}
\usepackage[T1]{fontenc}
\usepackage{lmodern}
\usepackage{amsmath, amsfonts, amssymb, amsthm}
\usepackage{geometry}
\usepackage{graphicx}
\usepackage{subcaption}
\usepackage{xcolor}
\usepackage{hyperref}
\usepackage{bbm}
\usepackage{caption}
\usepackage{lineno}
\usepackage{eurosym}
\usepackage{comment}
\usepackage{algorithm}
\usepackage{algpseudocode}
\usepackage{float}
\usepackage{amsthm}
\usepackage{booktabs}
\usepackage{amsthm}

\newtheorem{theorem}{Theorem}[section]
\newtheorem{lemma}[theorem]{Lemma}
\newtheorem{proposition}[theorem]{Proposition}
\newtheorem{definition}[theorem]{Definition}
\newtheorem{remark}[theorem]{Remark}

\newtheorem{problem}[theorem]{Problem}

\begin{document}
	
	\title{The Inverse Problem of Velocity Reconstruction from Flow-Induced
		Magnetic Fields: Stability and Uniqueness}
	\author{Yacine Mokhtari}
	\date{}
	\maketitle
	
	\begin{abstract}
		The reconstruction of velocity fields from flow-induced magnetic
		measurements is a fundamental inverse problem in magnetohydrodynamics, with
		broad applications in geophysics, astrophysics, and metallurgy. To address
		the nonlinearity of the governing equations, the magnetic induction equation
		is linearized around a known, space-dependent background magnetic field.
		This reveals a coupled parabolic--hyperbolic system in which magnetic
		diffusion interacts with velocity advection along the background field
		lines. By decoupling these mechanisms into a two-stage framework, sufficient
		conditions are established for both the uniqueness and stability of the
		velocity recovery. The main result proves that the non-uniqueness arising
		from unobservable velocity perturbations parallel to the background field is
		eliminated when the velocity trace on the inflow boundary is prescribed
		alongside magnetic observations. The analysis yields global uniqueness and a
		quantitative stability estimate of hybrid type: a Lipschitz contribution
		from the inflow trace combined with a weaker modulus from the parabolic
		observation, reflecting the mixed parabolic--hyperbolic structure of the
		problem. These theoretical findings are validated through three-dimensional
		numerical simulations that demonstrate the robustness of the decoupled
		reconstruction approach under noisy conditions.
	\end{abstract}
	
	
	
	\section{Introduction}
	
	The motion of a conducting fluid through a magnetic field induces electric
	currents, which in turn generate secondary magnetic fields. This
	magneto-induction mechanism is described by the magnetic induction equation
	and underlies a wide range of phenomena in geophysics, astrophysics, space
	physics, and industrial applications; see, for example, \cite%
	{davidson2001introduction,goedbloed2019magnetohydrodynamics,gurnett2017introduction,gerbeau2006mathematical,zhdanov2002geophysical}%
	. In many of these settings, direct measurements of the fluid velocity are
	difficult or impractical because of the inaccessibility of the flow, the
	opacity of the medium, or the extreme physical conditions. Magnetic
	measurements therefore provide a potentially valuable indirect means of
	recovering the underlying fluid motion. This motivates the inverse problem
	of reconstructing a velocity field from magnetic field observations, which
	has received considerable attention in the literature; see, e.g., \cite%
	{stefani1999velocity,stefani2000uniqueness,lehtikangas2016reconstruction,mokhtari2026velocity}%
	.
	
	The evolution of electromagnetic fields in a conducting medium is governed
	by Maxwell's equations. Under the magnetohydrodynamic approximation,
	neglecting displacement currents and free-charge accumulation, and using
	Ohm's law 
	\begin{equation*}
		\boldsymbol{J} =\sigma\bigl(\boldsymbol{E}+\boldsymbol{v}\times\boldsymbol{B}%
		\bigr),
	\end{equation*}
	one obtains the magnetic induction equation 
	\begin{equation}
		\partial_t\boldsymbol{B} -\eta\Delta\boldsymbol{B} =\nabla\times(\boldsymbol{%
			v}\times\boldsymbol{B}), \qquad \nabla\cdot\boldsymbol{B}=0, \qquad \eta=%
		\frac{1}{\mu_0\sigma},  \label{eq:induction}
	\end{equation}
	where $\boldsymbol{v}$ denotes the fluid velocity, $\boldsymbol{B}$ the
	magnetic induction, $\sigma>0$ the electrical conductivity, $\mu_0>0$ the
	magnetic permeability, and $\eta>0$ the magnetic diffusivity; see \cite%
	{stefani1999velocity,stefani2000uniqueness}. Throughout this work, we set $%
	\eta=1$ by nondimensionalization.
	
	We consider a connected bounded domain $\Omega\subset\mathbb{R}^3$ with sufficiently
	smooth boundary $\partial\Omega$, and a fixed time $T>0$. The magnetic field
	is subject to the boundary conditions 
	\begin{equation}
		\boldsymbol{B}\cdot\boldsymbol{n}=g_1, \qquad \boldsymbol{n}%
		\times(\nabla\times\boldsymbol{B}) =\boldsymbol{g}_2 \quad\text{on }%
		(0,T)\times\partial\Omega,  \label{eq:BC-magnetic}
	\end{equation}
	where $\boldsymbol{n}$ denotes the outward unit normal, $g_1$ is a
	scalar-valued boundary datum, and $\boldsymbol{g}_2$ is a tangential
	vector-valued datum. The first condition prescribes the normal component of
	the magnetic field and hence the magnetic flux through the boundary. The
	second prescribes the tangential component of its curl. Since 
	\begin{equation*}
		\nabla\times\boldsymbol{B}=\mu_0\boldsymbol{J},
	\end{equation*}
	the latter condition is equivalent to prescribing the tangential component
	of the current density, up to the factor $\mu_0$. These boundary conditions
	are natural in the analysis of magnetic diffusion operators and their
	associated boundary-value problems; see, for example, \cite%
	{schwarz1995hodge,arnold2006finiteelement}. In the Hodge-theoretic
	formulation, the homogeneous conditions 
	\begin{equation*}
		\boldsymbol{B}\cdot\boldsymbol{n}=0, \qquad \boldsymbol{n}\times(\nabla\times%
		\boldsymbol{B})=\boldsymbol{0} \quad\text{on }\partial\Omega
	\end{equation*}
	are the magnetic (absolute) boundary conditions for the Hodge Laplacian on
	divergence-free vector fields. Their physical interpretation depends on the
	electromagnetic model and the assumptions imposed on the wall; they should
	therefore not be identified indiscriminately with every formulation of a
	perfectly conducting boundary. For the latter distinction and the associated
	MHD boundary-value formulations, see \cite%
	{gerbeau2006mathematical,freidberg1987ideal}. The boundary data must also
	satisfy the compatibility conditions imposed by the divergence constraint
	and the evolution equation.
	
	The inverse problem studied here is to determine whether the velocity field $%
	\boldsymbol{v}$ can be uniquely recovered from observations of the magnetic
	field. We represent the available measurements by an observation operator $%
	\mathcal{L}$ acting on $\boldsymbol{B}$, so that the data take the form 
	\begin{equation*}
		\mathcal{L}(\boldsymbol{B}).
	\end{equation*}
	Depending on the observation configuration, $\mathcal{L}$ may represent
	measurements over a subregion of the domain, over a restricted time
	interval, or other partial observations specified in the subsequent
	analysis. The central questions are whether the observations determine the
	velocity uniquely, which compatibility conditions are necessary for
	reconstruction, and how the geometric and boundary assumptions influence
	identifiability.
	
	\subsection*{Related work}
	
	Velocity reconstruction from electromagnetic measurements has been
	investigated through analytical, numerical, and experimental approaches. The
	reconstruction problem using magnetic-field and electric-potential
	measurements was formulated in \cite{stefani1999velocity}. Non-uniqueness of
	the reconstruction problem was subsequently analyzed in
	\cite{stefani2000uniqueness}, where a regularization based on minimizing the
	kinetic energy of the velocity field was proposed as a selection criterion.
	A contactless method for velocity reconstruction in electrically conducting
	fluids was developed in \cite{stefani2000contactless}, and theoretical and
	experimental aspects of inverse problems in magnetohydrodynamics were
	further investigated in \cite{stefani2006inverse}. A finite-element-based
	Bayesian approach to reconstructing the longitudinal velocity component in
	electromagnetic flow tomography was proposed in
	\cite{lehtikangas2016reconstruction}, but no uniqueness or stability results
	are proved there. It is worth mentioning that recently, in
	\cite{huang2022carleman}, a Carleman estimate was established for the full
	MHD system; however, that work recovers a source term that is independent of the magnetic field or the velocity of the fluid
	
	More recently, \cite{mokhtari2026velocity} established uniqueness criteria
	for the linearized induction equation in whole-space and periodic settings
	with constant background magnetic field.
	
	\subsection*{Contribution} We prove stability and uniqueness for velocity
	reconstruction in a bounded domain from partial parabolic observations of
	the magnetic field. Magnetic observations alone are not sufficient for
	uniqueness; prescribing the velocity on the inflow boundary together with
	the magnetic boundary conditions restores uniqueness under suitable
	conditions on the space-dependent background field and bounded-domain
	transport geometry. The problem has a hybrid parabolic--hyperbolic
	structure: magnetic diffusion is parabolic, while velocity transport along
	the background field is hyperbolic. This motivates a two-stage
	reconstruction and yields a hybrid stability estimate combining a Lipschitz
	contribution from the inflow trace with a weaker modulus from the parabolic
	observation. Unlike previous work in unbounded or periodic settings with
	constant background fields, we allow a space-dependent background field.
	
	\subsection*{Organization of the paper}
	This paper is organized as follows. In Section~\ref{sec:forward}, we present
	the linearized model around a background magnetic field, the existence
	result for the forward problem, and the statement of the inverse problem. In
	Section~\ref{sec:setting}, we present the geometric and functional tools
	needed to announce and prove the main results for the inverse problem. In
	Section~\ref{sec:curl-div}, we establish a well-posedness result for a
	curl--div system that is required in the proof of the main results. In
	Section~\ref{sec:main results}, we state the main results for the inverse
	problem. In Section~\ref{sec:proof-main-results}, we prove the main results,
	and in Section~\ref{sec:numerical}, we present the numerical validation of
	both the stability and uniqueness results.

	
	\section{The model}
	
	\label{sec:forward}
	
	The nonlinearity of the induction equation makes direct analysis of the
	associated inverse problem difficult. Following the linearization strategy
	commonly used in inverse problems for magnetohydrodynamics \cite%
	{stefani1999velocity,stefani2000uniqueness}, we decompose the magnetic field
	as 
	\begin{equation}
		\boldsymbol{B}(t,\boldsymbol{x}) =\boldsymbol{F}(\boldsymbol{x})+\boldsymbol{%
			\ b}(t,\boldsymbol{x}),  \label{eq:B-decomposition}
	\end{equation}
	where $\boldsymbol{F}$ is a known, time-independent background magnetic
	field and $\boldsymbol{b}$ is the magnetic perturbation.
	
	We assume that the background field satisfies 
	\begin{equation}
		\boldsymbol{F}\in \lbrack C^{2}(\overline{\Omega })]^{3},\qquad \nabla \cdot 
		\boldsymbol{F}=0,\qquad \text{in }\Omega .  \label{F-assumption}
	\end{equation}
	
	Since $\boldsymbol{B}$ and $\boldsymbol{F}$ are divergence-free, the
	perturbation is also divergence-free: 
	\begin{equation*}
		\nabla\cdot \boldsymbol{b} =\nabla\cdot \boldsymbol{B}-\nabla\cdot 
		\boldsymbol{F}=0.
	\end{equation*}
	
	Recall from the introduction that the total magnetic field satisfies the
	boundary conditions 
	\begin{equation}
		\boldsymbol{B}\cdot \boldsymbol{n}=g_{1},\qquad \boldsymbol{n}\times (\nabla
		\times \boldsymbol{B})=\boldsymbol{g}_{2},\qquad \text{on }(0,T)\times
		\partial \Omega .  \label{eq:BC-magnetic-total}
	\end{equation}
	Because the decomposition \eqref{eq:B-decomposition} separates the known
	background field from the unknown perturbation, we impose the boundary
	condition on the total field $\boldsymbol{B}$ in such a way that the
	prescribed background field $\boldsymbol{F}$ satisfies the same boundary
	data 
	\begin{equation}
		g_{1}:=\boldsymbol{F}\cdot \boldsymbol{n},\qquad \boldsymbol{g}_{2}:= 
		\boldsymbol{n}\times (\nabla \times \boldsymbol{F}).  \label{eq:g1g2-choice}
	\end{equation}
	Consequently the perturbation satisfies homogeneous absolute boundary
	conditions 
	\begin{equation*}
		\boldsymbol{b}\cdot \boldsymbol{n}=0,\qquad \boldsymbol{n}\times (\nabla
		\times \boldsymbol{b})=\boldsymbol{0},\qquad \text{on }(0,T)\times \partial
		\Omega .
	\end{equation*}
	
	This choice is natural: the background field already satisfies the
	perfectly-conducting-wall or symmetry boundary conditions, and the
	perturbation is the difference between the total field and this known
	background. Hence the boundary data of the total field are precisely those
	of the background, so the perturbation inherits boundary data. Physically,
	the linearized perturbation does not alter the normal magnetic flux or the
	tangential current density at the boundary.
	
	Substituting \eqref{eq:B-decomposition} into the induction equation, using
	that $\boldsymbol{F}$ is time-independent and satisfies \eqref{F-assumption}
	, and dropping the bilinear term $\nabla\times(\boldsymbol{v}\times 
	\boldsymbol{b})$, we obtain the linearized system 
	\begin{equation}
		\begin{cases}
			\partial_t \boldsymbol{b}-\Delta \boldsymbol{b} =\Delta \boldsymbol{F}
			+\nabla\times(\boldsymbol{v}\times\boldsymbol{F}), & \text{in }
			(0,T)\times\Omega, \\[1mm] 
			\nabla\cdot \boldsymbol{b}=0, & \text{in }(0,T)\times\Omega, \\[1mm] 
			\boldsymbol{b}\cdot\boldsymbol{n}=0,\qquad \boldsymbol{n}\times(\nabla\times 
			\boldsymbol{b})=\boldsymbol{0}, & \text{on }(0,T)\times\partial\Omega, \\%
			[1mm] 
			\boldsymbol{b}|_{t=0}=\boldsymbol{b}_0, & \text{in }\Omega,%
		\end{cases}
		\label{b-system}
	\end{equation}
	where $\boldsymbol{b}_0=\boldsymbol{B}_0-\boldsymbol{F}$.
	
	
	\subsection{The Hodge Laplacian}
	
	\label{subsec:hodge-helmholtz} 
	
	It has been noted in several works, see, for example, \cite%
	{Gerbeth2005,monniaux2016stokes,gaudin2024hodge}, that, in $\Omega \subset 
	\mathbb{R}
	^{3}$, the Laplace operator acting on vector fields, 
	\begin{equation}
		-\Delta _{\mathrm{H}}\boldsymbol{u}:=-\Delta \boldsymbol{u}=\nabla \times
		(\nabla \times \boldsymbol{u})-\nabla (\nabla \cdot \boldsymbol{u}),
		\label{eq:hodge-laplacian}
	\end{equation}
	with the boundary conditions 
	\begin{equation}
		\boldsymbol{u}\cdot \boldsymbol{n}=0,\qquad \boldsymbol{n}\times (\nabla
		\times \boldsymbol{u})=\boldsymbol{0}\qquad \text{on }\partial \Omega ,
		\label{eq:hodge-boundary}
	\end{equation}
	is called the \emph{Hodge Laplacian}.
	
	We introduce the solenoidal Hilbert space 
	\begin{equation}
		\mathbb{H}:=\left\{ \boldsymbol{u}\in \lbrack L^{2}(\Omega )]^{3}:\nabla
		\cdot \boldsymbol{u}=0\ \text{in }\Omega ,\quad \boldsymbol{u}\cdot 
		\boldsymbol{n}=0\ \text{on }\partial \Omega \right\} .  \label{eq:H}
	\end{equation}
	Let 
	\begin{equation*}
		\mathbb{P}:[L^{2}(\Omega )]^{3}\longrightarrow \mathbb{H}
	\end{equation*}
	denote the Helmholtz--Leray orthogonal projection.
	
	A key property of the Hodge realization is its compatibility with $\mathbb{P}
	$ in $%
	\mathbb{R}
	^{3}$. More precisely, for $\boldsymbol{u}$ in the domain of the Hodge
	Laplacian, 
	\begin{equation}
		\mathbb{P}\boldsymbol{u}\in D(\Delta _{\mathrm{H}}),\qquad -\mathbb{P}\Delta 
		\boldsymbol{u}=-\Delta _{\mathrm{H}}\mathbb{P}\boldsymbol{u}=\nabla \times
		(\nabla \times \boldsymbol{u}),  \label{eq:P-Hodge}
	\end{equation}
	and $\mathbb{P}\boldsymbol{u}$ satisfies the same boundary conditions, 
	\begin{equation}
		(\mathbb{P}\boldsymbol{u})\cdot \boldsymbol{n}=0,\qquad \boldsymbol{n}\times %
		\bigl(\nabla \times \mathbb{P}\boldsymbol{u}\bigr)=\boldsymbol{0}\qquad 
		\text{on }\partial \Omega .  \label{eq:P-boundary}
	\end{equation}
	Thus, the compatibility is a property of the operator realizations and their
	domains, rather than an unrestricted commutation relation $\mathbb{P}\Delta
	=\Delta \mathbb{P}$ on $[L^{2}(\Omega )]^{3}$; see \cite{gaudin2024hodge}.
	
	We consider the realization of the Hodge Laplacian on $\mathbb{H}$ with
	domain 
	\begin{equation}
		D(\Delta_{\mathrm{H}}) := \left\{ \boldsymbol{u}\in\mathbb{H}: \Delta 
		\boldsymbol{u}\in[L^2(\Omega)]^3, \quad \boldsymbol{n}\times(\nabla\times 
		\boldsymbol{u}) = \boldsymbol{0} \ \text{on }\partial\Omega \right\}.
		\label{eq:hodge-domain}
	\end{equation}
	
	Since $\nabla\cdot\boldsymbol{u}=0$ for every $\boldsymbol{u}\in D(\Delta_{ 
		\mathrm{H}})$, we have 
	\begin{equation}
		-\Delta_{\mathrm{H}}\boldsymbol{u} = -\Delta\boldsymbol{u} =
		\nabla\times(\nabla\times\boldsymbol{u}).  \label{eq:hodge-curl-curl}
	\end{equation}
	
	Hence the divergence-free constraint and the boundary conditions are
	incorporated directly into the Hodge--Laplacian realization. The evolution
	problem can therefore be formulated directly on $\mathbb{H}$, without
	introducing an additional pressure term or an explicit Helmholtz projection. 
	
	
	\subsection{Well-posedness}
	
	\label{subsec:well-posedness} 
	
	For the linearized induction system \eqref{b-system}, we set 
	\begin{equation}
		\boldsymbol{h}(t)=\Delta \boldsymbol{F}+\nabla \times (\boldsymbol{v}
		(t)\times \boldsymbol{F}),\qquad t\in (0,T).  \label{h}
	\end{equation}
	
	The Hodge--Laplacian realization $\Delta_{\mathrm{H}}$ on $\mathbb{H}$ with
	absolute boundary conditions is self-adjoint and nonnegative. Consequently, $%
	\Delta_{\mathrm{H}}$ is maximal dissipative and generates an analytic $C_0 $%
	-semigroup on $\mathbb{H}$; see, for example, \cite[Theorem 7.1]%
	{mitrea2009analyticity}. We have the following result \cite[Theorem 3.2]%
	{magenes1972non}
	
	\begin{theorem}
		\label{theorem:existence} Let $T>0$. Assume that $\Omega $ is sufficiently
		smooth and let 
		\begin{equation*}
			\boldsymbol{h}\in L^{2}(0,T;\mathbb{H}),\qquad \boldsymbol{b}_{0}\in
			D(\Delta _{\mathrm{H}}^{\frac{1}{2}}).
		\end{equation*}%
		Then the problem \eqref{b-system} admits a unique strong solution 
		\begin{equation*}
			\boldsymbol{b}\in L^{2}(\left[ 0,T\right] ;D(\Delta _{\mathrm{H}}))\cap
			H^{1}(0,T;\mathbb{H}).
		\end{equation*}
	\end{theorem}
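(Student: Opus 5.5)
The plan is to recast \eqref{b-system} as an abstract Cauchy problem on the Hilbert space $\mathbb{H}$ and then invoke maximal $L^{2}$-regularity for the self-adjoint generator. Set $A:=-\Delta_{\mathrm{H}}$ with domain $D(\Delta_{\mathrm{H}})$ from \eqref{eq:hodge-domain}. By \eqref{eq:hodge-curl-curl}, the divergence constraint and both boundary conditions are encoded in $D(A)$, so \eqref{b-system} reads
\begin{equation*}
\boldsymbol{b}'(t)+A\boldsymbol{b}(t)=\boldsymbol{h}(t),\qquad \boldsymbol{b}(0)=\boldsymbol{b}_{0},
\end{equation*}
with $\boldsymbol{h}\in L^{2}(0,T;\mathbb{H})$. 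This step needs one check: $A$ is self-adjoint and nonnegative on $\mathbb{H}$. It is the operator associated with the closed, symmetric, coercive-up-to-shift form
\begin{equation*}
a(\boldsymbol{u},\boldsymbol{w})=\int_{\Omega}(\nabla\times\boldsymbol{u})\cdot(\nabla\times\boldsymbol{w})\,d\boldsymbol{x},
\end{equation*}
defined on $V=\{\boldsymbol{u}\in\mathbb{H}:\nabla\times\boldsymbol{u}\in[L^{2}(\Omega)]^{3}\}$. The condition $\boldsymbol{n}\times(\nabla\times\boldsymbol{u})=\boldsymbol{0}$ then arises as the natural boundary condition when integrating by parts. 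Moreover $V=D(A^{1/2})$ by Kato's square root property, which is automatic for symmetric forms.

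With this in hand, I would prove existence and the estimate by a spectral (or Galerkin) argument. On a smooth bounded domain, $V\hookrightarrow\mathbb{H}$ is compact (Gaffney/Weber), so $A$ has an orthonormal eigenbasis $\{\boldsymbol{e}_{k}\}$ with eigenvalues $0\le\lambda_{k}\to\infty$. Expanding $\boldsymbol{b}=\sum b_{k}\boldsymbol{e}_{k}$ gives scalar ODEs
\begin{equation*}
b_{k}'+\lambda_{k}b_{k}=h_{k},
\end{equation*}
solved by Duhamel's formula. The key energy identity comes from testing with $A\boldsymbol{b}$ (equivalently, multiplying the $k$-th ODE by $\lambda_{k}b_{k}$):
\begin{equation*}
\tfrac12\tfrac{d}{dt}\|A^{1/2}\boldsymbol{b}\|^{2}+\|A\boldsymbol{b}\|^{2}=(\boldsymbol{h},A\boldsymbol{b})\le\tfrac12\|\boldsymbol{h}\|^{2}+\tfrac12\|A\boldsymbol{b}\|^{2}.
\end{equation*}
This yields
\begin{equation*}
\|A\boldsymbol{b}\|_{L^{2}(0,T;\mathbb{H})}^{2}\le\|A^{1/2}\boldsymbol{b}_{0}\|^{2}+\|\boldsymbol{h}\|_{L^{2}(0,T;\mathbb{H})}^{2}.
\end{equation*}
The bound on $\boldsymbol{b}'=\boldsymbol{h}-A\boldsymbol{b}$ in $L^{2}(0,T;\mathbb{H})$ then follows from the equation. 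The $L^{2}(0,T;\mathbb{H})$ norm of $\boldsymbol{b}$ itself is controlled by the $H^{1}$-in-time bound together with $\|\boldsymbol{b}_{0}\|$. These estimates are uniform in the truncation, so passing to the limit gives $\boldsymbol{b}\in L^{2}(0,T;D(\Delta_{\mathrm{H}}))\cap H^{1}(0,T;\mathbb{H})$.

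Uniqueness follows from linearity. The difference $\boldsymbol{w}$ of two solutions solves the homogeneous problem with $\boldsymbol{w}(0)=0$, and testing with $\boldsymbol{w}$ gives $\tfrac{d}{dt}\|\boldsymbol{w}\|^{2}\le0$, hence $\boldsymbol{w}=0$. The solution also lies in $C([0,T];D(A^{1/2}))$ by the Lions–Magenes interpolation lemma, so the initial condition is meaningful. This is precisely the abstract result \cite[Theorem 3.2]{magenes1972non} with $V=D(A^{1/2})$.

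I expect the main obstacle to be identifying the operator, not the evolution estimate. One has to show that the form domain is exactly $D(A^{1/2})$ and that the form operator coincides with the realization in \eqref{eq:hodge-domain}. This requires the trace $\boldsymbol{n}\times(\nabla\times\boldsymbol{u})$ to make sense in $H^{-1/2}(\partial\Omega)$ for $\boldsymbol{u}$ with $\Delta\boldsymbol{u}\in L^{2}$. I would handle this by a generalized Green formula for $H(\mathrm{curl})$ fields, together with the Gaffney inequality on smooth domains, which gives $V\subset[H^{1}(\Omega)]^{3}$ and the compact embedding used above.

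A separate modeling point concerns $\boldsymbol{h}$ from \eqref{h}. It need not lie in $\mathbb{H}$ automatically, because its normal trace may not vanish. If not, one replaces $\boldsymbol{h}$ by $\mathbb{P}\boldsymbol{h}$, using \eqref{eq:P-Hodge}. The theorem, however, simply assumes $\boldsymbol{h}\in L^{2}(0,T;\mathbb{H})$, so this does not enter the proof.
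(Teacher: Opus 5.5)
Your proposal is correct and follows the same route as the paper, which only notes that $-\Delta_{\mathrm{H}}$ is a nonnegative self-adjoint operator on $\mathbb{H}$ (citing Mitrea--Monniaux) and then invokes the abstract maximal $L^{2}$-regularity theorem \cite[Theorem 3.2]{magenes1972non}. You go further than the paper in two places: you construct $A$ by the form method with $D(A^{1/2})=V$ and derive the estimate spectrally by testing with $A\boldsymbol{b}$, and you correctly allow $\lambda_k\ge 0$ (harmonic fields) and read the domain \eqref{eq:hodge-domain} through the form.
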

	
	We therefore define the state space 
	\begin{equation}
		\mathbb{Y}:=\boldsymbol{b}\in L^2(\left[ 0,T\right] ;D(\Delta _{\mathrm{H}%
		}))\cap H^{1}(0,T;\mathbb{H})).  \label{Y}
	\end{equation}
	
	\begin{remark}
		By definition of the source term $\boldsymbol{h}$ in \eqref{h}, the source
		term is required to belong to $\mathbb{H}$. Since 
		\begin{equation*}
			\nabla \cdot \boldsymbol{h}(t)=\nabla \cdot \left( \Delta \boldsymbol{F}%
			+\nabla \times (\boldsymbol{v}(t)\times \boldsymbol{F})\right) =\left(
			\Delta \left( \nabla \cdot \boldsymbol{F}\right) +\nabla \cdot \left( \nabla
			\times ( \boldsymbol{v}(t)\times \boldsymbol{F})\right) \right) =0,\quad 
			\mathrm{a.e \ on}\ (0,T),
		\end{equation*}
		holds identically, the remaining condition $\boldsymbol{h}\cdot \boldsymbol{n%
		}=0$ on $\partial \Omega $ gives precisely the boundary condition 
		\begin{equation*}
			\boldsymbol{n}\cdot \nabla \times (\boldsymbol{v}(t)\times \boldsymbol{F} )=-%
			\boldsymbol{n}\cdot \Delta \oldstylenums{F}\qquad \mathrm{on}\ \partial
			\Omega \ \mathrm{for} \ \mathrm{a.e \ on}\ (0,T).
		\end{equation*}
	\end{remark}
	\subsection{Statement of the inverse problem}
	
	We can now formulate the main inverse problem. Assume that $\boldsymbol{v}$
	can be written in a separated form%
	\begin{equation*}
		\boldsymbol{v}(t,\boldsymbol{x})=\alpha (t)\boldsymbol{u}(\boldsymbol{x}),
	\end{equation*}%
	where $\alpha \in L^{2}(0,T)$ is not zero on $(0,T)$ function that is
	supposed to be known.
	
	\begin{problem}[\textbf{Inverse Problem: Recovering $\boldsymbol{v}$ from
			observations $\mathcal{L}(\boldsymbol{y})$}]
		\label{problem-inverse} Determine the spatial velocity vector field $%
		\boldsymbol{u}$ in $\Omega $ from the measurements modeled by the
		observation operator $\mathcal{L}(\boldsymbol{y})$, supplemented by
		additional boundary measurements on $\boldsymbol{u}$ as required for
		well-posedness.
	\end{problem}
	
	First, observe that uniqueness of source recovery in Problem~\ref%
	{problem-inverse} cannot hold for arbitrary $\boldsymbol{v}$ so that $%
	\boldsymbol{z}\in \mathbb{H}$ defined in \ref{h}. Indeed, the linear
	operator 
	\begin{equation}
		\boldsymbol{u}\longmapsto \nabla \times \bigl(\boldsymbol{u}\times 
		\boldsymbol{F}\bigr),  \label{1}
	\end{equation}%
	possesses a nontrivial kernel. Specifically, let $c(\boldsymbol{x})$ be a
	sufficiently smooth scalar function satisfying the transport equation $%
	\nabla \cdot \left( c\boldsymbol{F}\right) =0,$ i.e, 
	\begin{equation*}
		\nabla \cdot \left( c\boldsymbol{F}\right) =\boldsymbol{F}\cdot \nabla
		c=F_{1}\partial _{1}c+F_{2}\partial _{2}c+F_{3}\partial _{3}c=0,\qquad \text{
			in }\Omega .
	\end{equation*}%
	The existence of such a function $c$ follows from the transport equation
	theory. We have 
	\begin{equation*}
		(\boldsymbol{u}+c\boldsymbol{F})\times \boldsymbol{F}=\boldsymbol{u}\times 
		\boldsymbol{F},\qquad t\in (0,T),
	\end{equation*}%
	and consequently, 
	\begin{equation}
		\nabla \times \bigl((\boldsymbol{u}+c\boldsymbol{F})\times \boldsymbol{F}%
		\bigr)=\nabla \times (\boldsymbol{u}\times \boldsymbol{F}).  \label{parallel}
	\end{equation}%
	Thus, any component of $\boldsymbol{v}$ parallel to the background field $%
	\boldsymbol{F}$ is inherently unobservable through the induction equation,
	independently of the available state measurements $\mathcal{L}$. Additional
	information regarding the source field is therefore required to establish
	unique reconstruction.
	
	Using the vector identity 
	\begin{equation*}
		\nabla \times (\boldsymbol{A}\times \boldsymbol{B})=\boldsymbol{A}(\nabla
		\cdot \boldsymbol{B})-\boldsymbol{B}(\nabla \cdot \boldsymbol{A})+(%
		\boldsymbol{B}\cdot \nabla )\boldsymbol{A}-(\boldsymbol{A}\cdot \nabla )%
		\boldsymbol{B},
	\end{equation*}%
	with $\boldsymbol{A}=\boldsymbol{u}$, $\boldsymbol{B}=\boldsymbol{F}$, and
	taking into account $\nabla \cdot \boldsymbol{u}=0$ and $\nabla \cdot 
	\boldsymbol{F}=0$, the operator \eqref{1} expands as 
	\begin{equation}
		\boldsymbol{u}\longmapsto \nabla \times (\boldsymbol{u}\times \boldsymbol{F}%
		)=(\boldsymbol{F}\cdot \nabla )\boldsymbol{u}-(\boldsymbol{u}\cdot \nabla )%
		\boldsymbol{F},\qquad t\in (0,T).  \label{First-order}
	\end{equation}%
	To make this structural non-uniqueness explicit, recall that by the idendity %
	\ref{First-order} the operator \ref{1} is a first-order transport operator
	along the characteristics of $\boldsymbol{F}$. In particular, parallel
	perturbations of the form $c\boldsymbol{F}$ cannot be detected via the curl
	operator alone.
	
	To eliminate this ambiguity, we supplement the internal state observations
	with measurements of $\boldsymbol{u}$ restricted to the inflow boundaryin
	order to ensures unique source reconstruction.


	
	
	
	\section{Geometric and Functional settings for the inverse problem}
	\label{sec:setting}
	\subsection{Geometric setting}
	
	Let $\Gamma_{\mathrm{in}}\subset\partial\Omega$ denote the inflow boundary
	region, i.e., the portion of the boundary through which the background field 
	$\boldsymbol{F}$ enters the domain: 
	\begin{equation}
		\Gamma_{\mathrm{in}} := \left\{ \boldsymbol{x}\in\partial\Omega: \boldsymbol{%
			F}(\boldsymbol{x})\cdot\boldsymbol{n}(\boldsymbol{x})<0 \right\},
		\label{Gamma_in}
	\end{equation}
	where $\boldsymbol{n}$ denotes the outward unit normal to $\partial\Omega$.
	
	Since $\boldsymbol{F}\in[C^{2}(\overline{\Omega})]^{3}$, we extend $%
	\boldsymbol{F}$ to a $C^{2}$ vector field on $\mathbb{R}^{3}$ with compact
	support. By the Cauchy--Lipschitz theorem, the associated global flow 
	\begin{equation*}
		\boldsymbol{X} \in C^{1}(\mathbb{R}\times\mathbb{R}^{3};\mathbb{R}^{3})
	\end{equation*}
	is uniquely defined by 
	\begin{equation}
		\partial_t\boldsymbol{X}(t,\boldsymbol{x}_{0}) = \boldsymbol{F}\bigl(%
		\boldsymbol{X}(t,\boldsymbol{x}_{0})\bigr), \qquad \boldsymbol{X}(0,%
		\boldsymbol{x}_{0}) = \boldsymbol{x}_{0}.  \label{flow}
	\end{equation}
	
	\begin{definition}[$\Omega$-filling]
		\label{Definition-Omega-Filing} Let $\boldsymbol{F}$ generate the flow $(t,%
		\boldsymbol{x}_{0})\mapsto\boldsymbol{X}(t,\boldsymbol{x}_{0})$, let $%
		\Gamma_{\mathrm{in}}$ be the inflow boundary with surface measure $\sigma$,
		let $\tau(\boldsymbol{x}_{0})>0$ be the exit times, and set 
		\begin{equation*}
			\mathcal{D}:=\{(t,\boldsymbol{x}_{0}):\boldsymbol{x}_{0}\in\Gamma_{\mathrm{in%
			}},\ 0<t<\tau(\boldsymbol{x}_{0})\}.
		\end{equation*}
		The flow is \emph{$\Omega$-filling} if there exist $T_{\mathrm{fill}}>0$ and
		a null set $\mathcal{N}\subset\Omega$ such that, for a.e.\ $\boldsymbol{x}%
		\in\Omega\setminus\mathcal{N}$, there exists a pair $(\boldsymbol{x}%
		_{0},\tau)$ with 
		\begin{equation*}
			\boldsymbol{x}=\boldsymbol{X}(\tau,\boldsymbol{x}_{0}), \qquad \boldsymbol{x}%
			_{0}\in\Gamma_{\mathrm{in}},\ 0<\tau\le T_{\mathrm{fill}}, \qquad 
			\boldsymbol{X}(t,\boldsymbol{x}_{0})\in\Omega \ \ \forall\,0<t<\tau.
		\end{equation*}
		Moreover, $\tau(\boldsymbol{x}_{0})\le T_{\mathrm{fill}}$ for $\sigma$-a.e.\ 
		$\boldsymbol{x}_{0}\in\Gamma_{\mathrm{in}}$.
	\end{definition}
	
	\begin{lemma}[Injectivity of the characteristic flow]
		\label{lem:flow-injective} The flow 
		\begin{equation*}
			\boldsymbol{X}:\mathcal{D}\to\Omega,\qquad (t,\boldsymbol{x}_0)\mapsto 
			\boldsymbol{X}(t,\boldsymbol{x}_0),
		\end{equation*}
		is injective.
	\end{lemma}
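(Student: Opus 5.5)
Suppose $\boldsymbol{X}(t_1,\boldsymbol{x}_1)=\boldsymbol{X}(t_2,\boldsymbol{x}_2)=:\boldsymbol{y}$ with $(t_i,\boldsymbol{x}_i)\in\mathcal{D}$. The plan is to reduce to the group property of the global flow and then rule out a trajectory that returns to the inflow boundary while staying inside $\Omega$. Since $\boldsymbol{F}$ has been extended to a compactly supported $C^{2}$ field on $\mathbb{R}^3$, the Cauchy--Lipschitz theorem gives a global flow with $\boldsymbol{X}(s,\boldsymbol{X}(t,\boldsymbol{x}))=\boldsymbol{X}(s+t,\boldsymbol{x})$ and $\boldsymbol{X}(t,\cdot)$ a bijection of $\mathbb{R}^3$ with inverse $\boldsymbol{X}(-t,\cdot)$. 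Without loss of generality $t_1\ge t_2$. Applying $\boldsymbol{X}(-t_2,\cdot)$ gives
\[
\boldsymbol{x}_2=\boldsymbol{X}(t_1-t_2,\boldsymbol{x}_1).
\]
If $t_1=t_2$ this yields $\boldsymbol{x}_1=\boldsymbol{x}_2$ and we are done, so it remains to exclude $s:=t_1-t_2>0$.

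\textbf{Case $s>0$.} Here $0<s<t_1<\tau(\boldsymbol{x}_1)$, so by definition of the exit time $\tau(\boldsymbol{x}_1)$ (the first time the trajectory leaves $\Omega$, consistent with the requirement $\boldsymbol{X}(t,\boldsymbol{x}_0)\in\Omega$ for $0<t<\tau$ in Definition~\ref{Definition-Omega-Filing}) the point $\boldsymbol{X}(s,\boldsymbol{x}_1)$ lies in the open set $\Omega$. But $\boldsymbol{X}(s,\boldsymbol{x}_1)=\boldsymbol{x}_2\in\Gamma_{\mathrm{in}}\subset\partial\Omega$, and $\Omega\cap\partial\Omega=\emptyset$, a contradiction. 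Hence $s=0$, which gives $t_1=t_2$ and $\boldsymbol{x}_1=\boldsymbol{x}_2$.

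The main point needing care is the precise meaning of $\tau(\boldsymbol{x}_0)$, since the paper only names it ``exit time''. I will fix it as
\[
\tau(\boldsymbol{x}_0)=\sup\{t>0:\boldsymbol{X}(r,\boldsymbol{x}_0)\in\Omega\ \forall r\in(0,t)\},
\]
and check that this is positive for $\boldsymbol{x}_0\in\Gamma_{\mathrm{in}}$. Positivity follows because $\boldsymbol{F}(\boldsymbol{x}_0)\cdot\boldsymbol{n}(\boldsymbol{x}_0)<0$. With $\rho$ a local $C^1$ defining function of $\Omega$ (negative inside, $\nabla\rho$ parallel to $\boldsymbol{n}$), we get $\frac{d}{dt}\rho(\boldsymbol{X}(t,\boldsymbol{x}_0))|_{t=0}<0$, so the trajectory immediately enters $\Omega$. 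With this definition the contradiction above is immediate, and no further geometric hypothesis (such as the $\Omega$-filling property) is needed for injectivity.
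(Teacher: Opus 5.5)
Your proof is correct and follows the same route as the paper. Both use the flow property to write $\boldsymbol{x}_2=\boldsymbol{X}(t_1-t_2,\boldsymbol{x}_1)$, then rule out $t_1>t_2$ because $0<t_1-t_2<t_1<\tau(\boldsymbol{x}_1)$ would place $\boldsymbol{x}_2\in\Gamma_{\mathrm{in}}\subset\partial\Omega$ inside the open set $\Omega$. Your supremum definition of $\tau$, the check that $\tau>0$ on $\Gamma_{\mathrm{in}}$, and treating $t_1=t_2$ via the same identity instead of a separate backward-uniqueness step are minor clarifications of details the paper leaves implicit.
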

	
	\begin{proof}
		Suppose
		\[
		\boldsymbol{X}(t_1,\boldsymbol{x}_1)=\boldsymbol{X}(t_2,\boldsymbol{x}_2)
		\]
		with \((t_1,\boldsymbol{x}_1),(t_2,\boldsymbol{x}_2)\in\mathcal{D}\).
		Without loss of generality, assume \(t_1\ge t_2\).
		
		If \(t_1>t_2\), set \(s=t_1-t_2>0\). By uniqueness of solutions to the autonomous ODE,
		\[
		\boldsymbol{x}_2=\boldsymbol{X}(s,\boldsymbol{x}_1).
		\]
		But
		\[
		0<s=t_1-t_2<t_1<\tau(\boldsymbol{x}_1),
		\]
		so \(\boldsymbol{X}(s,\boldsymbol{x}_1)\in\Omega\). Hence \(\boldsymbol{x}_2\in\Omega\), which contradicts \(\boldsymbol{x}_2\in\Gamma_{\mathrm{in}}\subset\partial\Omega\). Therefore \(t_1=t_2\).
		
		Now
		\[
		\boldsymbol{X}(t_1,\boldsymbol{x}_1)=\boldsymbol{X}(t_1,\boldsymbol{x}_2).
		\]
		Since \(t_1<\tau(\boldsymbol{x}_1)\) and \(t_1<\tau(\boldsymbol{x}_2)\), uniqueness of the ODE applied backward in time yields \(\boldsymbol{x}_1=\boldsymbol{x}_2\). Thus \(\boldsymbol{X}\) is injective on \(\mathcal{D}\).
	\end{proof}
	
	\begin{remark}
		\label{rem:surjectivity-condition} A standard sufficient condition for the 
		\emph{$\Omega$-filling} property is the existence of a unit vector $%
		\boldsymbol{\gamma}$ and a constant $\lambda>0$ such that \cite[Proposition~1%
		]{azerad1996inegalite} 
		\begin{equation}
			\boldsymbol{F}(\boldsymbol{x})\cdot\boldsymbol{\gamma} > \lambda \qquad\text{%
				for all }\boldsymbol{x}\in\overline{\Omega}.  \label{Filling-Condition}
		\end{equation}
		This ensures every characteristic enters through $\Gamma_{\mathrm{in}}$,
		traverses $\Omega$ in finite time, and covers the domain. By Lemma~\ref%
		{lem:flow-injective}, the representation is automatically unique, so the
		flow is a bijection from $\mathcal{D}$ onto $\Omega\setminus\mathcal{N}$.
	\end{remark}
	
	\begin{remark}
		In the constant case $\boldsymbol{F}(\boldsymbol{x})\equiv \boldsymbol{F}%
		_{0}\neq 0$, the flow is explicit: 
		\begin{equation*}
			\boldsymbol{X}(t,\boldsymbol{x}_{0})=\boldsymbol{x}_{0}+t\boldsymbol{F}_{0}.
		\end{equation*}
		Condition~\eqref{Filling-Condition} holds trivially with $\boldsymbol{\gamma}%
		=\|\boldsymbol{F}_{0}\|^{-1}\boldsymbol{F}_{0}$, and injectivity follows
		from Lemma~\ref{lem:flow-injective}. If the $\Omega$-filling property also
		holds, then $\boldsymbol{X}$ is a bijection from $\mathcal{D}$ onto $%
		\Omega\setminus\mathcal{N}$.
	\end{remark}
	
	\subsection{Functional setting}
	
	\begin{definition}[The space $\mathbb{X}_{\boldsymbol{F}}$]
		\label{Definition-XF} We define 
		\begin{equation}
			\mathbb{X}_{\boldsymbol{F}}:=\left\{ \boldsymbol{u}\in \left[ L^{2}(\Omega )%
			\right] ^{3}:\nabla \cdot \boldsymbol{u}=0,\quad \nabla \times (\boldsymbol{u%
			}\times \boldsymbol{F})\in \mathbb{H}\right\} ,  \label{X_F}
		\end{equation}%
		equipped with the norm 
		\begin{equation*}
			\Vert \boldsymbol{u}\Vert _{\mathbb{X}_{\boldsymbol{F}}}^{2}:=\left\Vert 
			\boldsymbol{u}\right\Vert _{\mathbb{H}}^{2}+\left\Vert \nabla \times (%
			\boldsymbol{u}\times \boldsymbol{F})\right\Vert _{\mathbb{H}}^{2}.
		\end{equation*}
	\end{definition}
	
	Under the \emph{$\Omega$-filling} assumption, Proposition 2.4 of \cite%
	{brunken2019parametrized} yields a bounded trace operator 
	\begin{equation*}
		\gamma _{-}:\mathbb{X}_{\boldsymbol{F}}\longrightarrow L^{2}\bigl(\Gamma _{%
			\mathrm{in}};\tau|\boldsymbol{F}\cdot \boldsymbol{n}|\bigr),
	\end{equation*}
	where 
	\begin{equation*}
		L^{2}\bigl(\Gamma _{\mathrm{in}};\tau|\boldsymbol{F}\cdot \boldsymbol{n}|%
		\bigr):=\left\{ \boldsymbol{g}:\Gamma _{\mathrm{in}}\rightarrow \mathbb{R}%
		^{3}:\boldsymbol{g}\ \text{is measurable and}\ \int_{\Gamma _{\mathrm{in}}}|%
		\boldsymbol{g}|^{2}\tau|\boldsymbol{F}\cdot \boldsymbol{n}|\,d\sigma <\infty
		\right\},
	\end{equation*}
	equipped with the norm 
	\begin{equation*}
		\Vert \boldsymbol{g}\Vert _{L^{2}(\Gamma _{\mathrm{in}};\tau|\boldsymbol{F}%
			\cdot \boldsymbol{n}|)}^{2}:=\int_{\Gamma _{\mathrm{in}}}|\boldsymbol{g}%
		|^{2}\tau|\boldsymbol{F}\cdot \boldsymbol{n}|\,d\sigma .
	\end{equation*}
	We have:
	
	\begin{proposition}
		\label{proposition-trace} Assume that $\boldsymbol{F}$ satisfies %
		\eqref{F-assumption}, and that the flow $\boldsymbol{X}$ is \emph{$\Omega$-filling} in the sense of Definition~\ref{Definition-Omega-Filing}. Then
		there exists a positive constant $C>0$ such that 
		\begin{equation}
			\int_{\Gamma _{\mathrm{in}}}|\gamma _{-}\boldsymbol{u}|^{2}\,\tau \,|%
			\boldsymbol{F}\cdot \boldsymbol{n}|\,\mathrm{d}\sigma \leq C\,\Vert 
			\boldsymbol{u}\Vert _{\mathbb{X}_{\boldsymbol{F}}}^{2},\qquad \boldsymbol{u}%
			\in \mathbb{X}_{\boldsymbol{F}}.  \label{trace-estimate}
		\end{equation}
	\end{proposition}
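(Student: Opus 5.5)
Since $\nabla\cdot\boldsymbol{u}=0$ and $\nabla\cdot\boldsymbol{F}=0$, the identity \eqref{First-order} shows that for $\boldsymbol{u}\in\mathbb{X}_{\boldsymbol{F}}$ the directional derivative $(\boldsymbol{F}\cdot\nabla)\boldsymbol{u}=\nabla\times(\boldsymbol{u}\times\boldsymbol{F})+(\boldsymbol{u}\cdot\nabla)\boldsymbol{F}$ lies in $[L^{2}(\Omega)]^{3}$. Because $\boldsymbol{F}\in[C^{2}(\overline{\Omega})]^{3}$, we get
\begin{equation*}
\|(\boldsymbol{F}\cdot\nabla)\boldsymbol{u}\|_{L^{2}}\le \|\nabla\times(\boldsymbol{u}\times\boldsymbol{F})\|_{L^{2}}+\|\nabla\boldsymbol{F}\|_{L^{\infty}}\|\boldsymbol{u}\|_{L^{2}}\le C\|\boldsymbol{u}\|_{\mathbb{X}_{\boldsymbol{F}}}.
\end{equation*}
Thus $\mathbb{X}_{\boldsymbol{F}}$ embeds continuously into the graph space $W:=\{\boldsymbol{u}\in[L^{2}]^{3}:(\boldsymbol{F}\cdot\nabla)\boldsymbol{u}\in[L^{2}]^{3}\}$ of the transport operator. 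The estimate then reduces to the weighted inflow trace inequality for $W$, applied componentwise. This is essentially the content of \cite[Proposition~2.4]{brunken2019parametrized}, which we may invoke. For completeness I would still sketch it via characteristics.

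\textbf{Step 1: computation for smooth $\boldsymbol{u}$.} By density I first take $\boldsymbol{u}$ smooth on $\overline{\Omega}$ and work along characteristics using the parametrization of Lemma~\ref{lem:flow-injective}. Under the $\Omega$-filling assumption this parametrization is a bijection $\mathcal{D}\to\Omega\setminus\mathcal{N}$. The change of variables formula for $\boldsymbol{x}=\boldsymbol{X}(t,\boldsymbol{x}_{0})$ reads $\mathrm{d}\boldsymbol{x}=|\boldsymbol{F}\cdot\boldsymbol{n}(\boldsymbol{x}_{0})|\,\mathrm{d}t\,\mathrm{d}\sigma(\boldsymbol{x}_{0})$. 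The Jacobian is constant in $t$ by Liouville's formula, since $\nabla\cdot\boldsymbol{F}=0$, and at $t=0$ it equals the normal flux. Along each characteristic $w(t):=\boldsymbol{u}(\boldsymbol{X}(t,\boldsymbol{x}_{0}))$ satisfies $w'(t)=((\boldsymbol{F}\cdot\nabla)\boldsymbol{u})(\boldsymbol{X}(t,\boldsymbol{x}_{0}))$. Hence for $0<t<\tau(\boldsymbol{x}_{0})$,
\begin{equation*}
|w(0)|^{2}\le 2|w(t)|^{2}+2\tau\int_{0}^{\tau}|w'(s)|^{2}\,\mathrm{d}s .
\end{equation*}
Integrating in $t$ over $(0,\tau)$ gives
\begin{equation*}
\tau|w(0)|^{2}\le 2\int_{0}^{\tau}|w|^{2}\,\mathrm{d}t+2\tau^{2}\int_{0}^{\tau}|w'|^{2}\,\mathrm{d}t .
\end{equation*}
I then use $\tau\le T_{\mathrm{fill}}$ for $\sigma$-a.e.\ $\boldsymbol{x}_{0}$, multiply by $|\boldsymbol{F}\cdot\boldsymbol{n}|$, integrate over $\Gamma_{\mathrm{in}}$, and undo the change of variables. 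This yields
\begin{equation*}
\int_{\Gamma_{\mathrm{in}}}|\boldsymbol{u}|^{2}\tau|\boldsymbol{F}\cdot\boldsymbol{n}|\,\mathrm{d}\sigma\le 2\|\boldsymbol{u}\|_{L^{2}}^{2}+2T_{\mathrm{fill}}^{2}\|(\boldsymbol{F}\cdot\nabla)\boldsymbol{u}\|_{L^{2}}^{2}.
\end{equation*}
Combining this with the first bound gives \eqref{trace-estimate}.

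\textbf{Step 2: extension to all of $\mathbb{X}_{\boldsymbol{F}}$.} The estimate passes to the full space by density of smooth functions in the graph norm of $W$, which is a Friedrichs-type mollification argument. This defines $\gamma_{-}$ as the continuous extension of the restriction map.

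\textbf{Main obstacle.} The delicate point is justifying the change of variables and the density. Exit times may be discontinuous, and characteristics tangent to $\partial\Omega$ form a set on which the weight $|\boldsymbol{F}\cdot\boldsymbol{n}|$ degenerates. I would handle this by cutting off near the set $\{\boldsymbol{F}\cdot\boldsymbol{n}=0\}$ and applying monotone convergence, relying on the null set $\mathcal{N}$ in Definition~\ref{Definition-Omega-Filing}. Alternatively, I would simply cite \cite{brunken2019parametrized} for this step.
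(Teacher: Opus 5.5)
Your proposal is correct and follows essentially the same route as the paper: the vector identity gives $(\boldsymbol{F}\cdot\nabla)\boldsymbol{u}\in L^{2}$ with the same bound, and you apply the one-dimensional inequality along each characteristic, integrate against $|\boldsymbol{F}\cdot\boldsymbol{n}|\,\mathrm{d}\sigma$ using the Liouville change of variables, and use $\tau\le T_{\mathrm{fill}}$ (your constant $2\tau^{2}$ is only slightly cruder than the paper's $\tau^{2}$). The one difference is how regularity is justified: the paper works directly with $\boldsymbol{u}\in\mathbb{X}_{\boldsymbol{F}}$, transporting the distributional identity to flow coordinates and using Fubini to get $t\mapsto\boldsymbol{u}(\boldsymbol{X}(t,\boldsymbol{x}_0))\in H^{1}(0,\tau(\boldsymbol{x}_0))$ for a.e.\ $\boldsymbol{x}_0$, whereas you prove the bound for smooth fields in the graph norm of $W$ and extend by Friedrichs-type density, which is equally valid because the density is needed only in $W$ and not in the divergence-free class $\mathbb{X}_{\boldsymbol{F}}$.
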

	
	\begin{proof}
		Fix $\boldsymbol{u}\in\mathbb{X}_{\boldsymbol{F}}$ and set
		\[
		\boldsymbol{w}
		:=
		\nabla\times(\boldsymbol{u}\times\boldsymbol{F}),
		\qquad
		\boldsymbol{f}
		:=
		\boldsymbol{w}+(\boldsymbol{u}\cdot\nabla)\boldsymbol{F}.
		\]
		The vector identity
		\[
		\nabla\times(\boldsymbol{u}\times\boldsymbol{F})
		=
		\boldsymbol{u}\,\nabla\cdot\boldsymbol{F}
		-\boldsymbol{F}\,\nabla\cdot\boldsymbol{u}
		+(\boldsymbol{F}\cdot\nabla)\boldsymbol{u}
		-(\boldsymbol{u}\cdot\nabla)\boldsymbol{F}
		\]
		and the divergence-free assumptions imply, in distributions, that
		\[
		(\boldsymbol{F}\cdot\nabla)\boldsymbol{u}
		=
		\boldsymbol{f}.
		\]
		Moreover,
		\[
		\|\boldsymbol{f}\|_{L^2(\Omega)}
		\le
		\|\boldsymbol{w}\|_{L^2(\Omega)}
		+
		\|D\boldsymbol{F}\|_{L^\infty(\Omega)}
		\|\boldsymbol{u}\|_{L^2(\Omega)}.
		\]
		
		The change-of-variables formula transforms this distributional identity
		into
		\[
		\partial_t(\boldsymbol{u}\circ\boldsymbol{X})
		=
		\boldsymbol{f}\circ\boldsymbol{X}
		\]
		in flow coordinates. This can be verified by testing locally in
		flow-coordinate charts. Since both pullbacks are square-integrable in
		the flow-coordinate measure, Fubini's theorem implies that
		$t\mapsto\boldsymbol{u}(\boldsymbol{X}(t,\boldsymbol{x}_0))$
		belongs to $H^1(0,\tau(\boldsymbol{x}_0))$ for almost every
		$\boldsymbol{x}_0\in\Gamma_{\mathrm{in}}$.
		
		For $\boldsymbol{z}\in H^1(0,\tau;\mathbb{R}^d)$,
		\[
		\tau|\boldsymbol{z}(0)|^2
		\le
		2\int_0^\tau|\boldsymbol{z}(t)|^2\,\mathrm{d}t
		+
		\tau^2\int_0^\tau|\boldsymbol{z}'(t)|^2\,\mathrm{d}t.
		\]
		Apply this inequality along almost every flow line, multiply by
		$|\boldsymbol{F}(\boldsymbol{x}_0)\cdot
		\boldsymbol{n}(\boldsymbol{x}_0)|$, and integrate over
		$\Gamma_{\mathrm{in}}$. The change-of-variables formula and
		$\tau\le T_{\mathrm{fill}}$ give
		\[
		\int_{\Gamma_{\mathrm{in}}}
		|\gamma_{-}\boldsymbol{u}|^{2}\,
		\tau|\boldsymbol{F}\cdot\boldsymbol{n}|\,\mathrm{d}\sigma
		\le
		2\|\boldsymbol{u}\|_{L^2(\Omega)}^{2}
		+
		T_{\mathrm{fill}}^{2}\|\boldsymbol{f}\|_{L^2(\Omega)}^{2}.
		\]
		Using the bound for $\boldsymbol{f}$ and
		$(a+b)^2\le 2a^2+2b^2$, we obtain
		\[
		\int_{\Gamma_{\mathrm{in}}}
		|\gamma_{-}\boldsymbol{u}|^{2}\,
		\tau|\boldsymbol{F}\cdot\boldsymbol{n}|\,\mathrm{d}\sigma
		\le
		C\left(
		\|\boldsymbol{u}\|_{L^2(\Omega)}^{2}
		+
		\|\boldsymbol{w}\|_{L^2(\Omega)}^{2}
		\right),
		\]
		where one may take
		\[
		C=
		\max\left\{
		2+2T_{\mathrm{fill}}^{2}\|D\boldsymbol{F}\|_{L^\infty}^{2},
		\;2T_{\mathrm{fill}}^{2}
		\right\},		\]
		which is exactly the estimate \eqref{trace-estimate}.
	\end{proof}
	
	\begin{remark}[$L^2_{\mathrm{loc}}$-trace]
		\label{remark:tau-essential} In previous works, the trace of $\boldsymbol{u}%
		\in\mathbb{X}_{\boldsymbol{F}}$ on the inflow boundary $\Gamma_{\mathrm{in}} 
		$ is only known to be bounded in $L^2_{\mathrm{loc}}(\Gamma_{\mathrm{in}};|%
		\boldsymbol{F}\cdot\boldsymbol{n}|\,\mathrm{d}\sigma)$, without the factor $%
		\tau$ (see for instance the survey \cite{imagawa2025revisit}). This local
		regularity is not sufficient to obtain a global estimate in $L^2(\Gamma_{%
			\mathrm{in}};|\boldsymbol{F}\cdot\boldsymbol{n}|\,\mathrm{d}\sigma)$,
		because the exit time $\tau(\boldsymbol{x}_0)$ degenerates near the grazing
		part of $\Gamma_{\mathrm{in}}$, where $\boldsymbol{F}\cdot\boldsymbol{n}\to0$%
		. The weight $\tau$ in \eqref{trace-estimate} is therefore essential: it
		compensates for this degeneracy and yields a global trace estimate in $%
		L^2(\Gamma_{\mathrm{in}};\tau|\boldsymbol{F}\cdot\boldsymbol{n}|\,\mathrm{d}%
		\sigma)$ uniformly over $\mathbb{X}_{\boldsymbol{F}}$.
	\end{remark}
	
	\begin{remark}[Sharpness of the weight]
		\label{rem:sharpness-tau-weight} The factor $\tau$ in the trace estimate
		cannot in general be omitted. More precisely, for $\Omega=B(0,1)\subset%
		\mathbb{R}^{3}$ and $\boldsymbol{F}=\boldsymbol{e}_{1}$, there is no
		constant $C$ such that * 
		\begin{equation*}
			\|\gamma_-\boldsymbol{u}\|_{L^{2} (\Gamma_{\mathrm{in}};\,|\boldsymbol{F}%
				\cdot\boldsymbol{n}|\,\mathrm{d}\sigma)} \le C\|\boldsymbol{u}\|_{\mathbb{X}%
				_{\boldsymbol{F}}} \qquad \text{for all }\boldsymbol{u}\in\mathbb{X}_{%
				\boldsymbol{F}}.
		\end{equation*}
		In fact, for every $\alpha<1$, the same is true if the boundary measure is
		weighted by $\tau^\alpha$ instead of $\tau$.
		
		To see this, note that 
		\begin{equation*}
			\Gamma_{\mathrm{in}} = \{\boldsymbol{y}\in\partial B(0,1):y_1<0\}, \qquad 
			\boldsymbol{X}(t,\boldsymbol{y}) =\boldsymbol{y}+t\boldsymbol{e}_1, \qquad
			\tau(\boldsymbol{y})=-2y_1.
		\end{equation*}
		Write $r^2=y_2^2+y_3^2$. On the inflow hemisphere, 
		\begin{equation*}
			y_1=-\sqrt{1-r^2}, \qquad \tau=2\sqrt{1-r^2}, \qquad |\boldsymbol{F}\cdot%
			\boldsymbol{n}|\,\mathrm{d}\sigma =\mathrm{d}y_2\,\mathrm{d}y_3.
		\end{equation*}
		
		Fix $0<\varepsilon<1$, and choose a nonzero $\psi_\varepsilon\in C_c^\infty(%
		\mathbb{R}^{2})$ whose support is contained in the annulus 
		\begin{equation*}
			A_\varepsilon := \left\{(y_2,y_3): 1-\left(\frac{\varepsilon}{2}\right)^2
			<y_2^2+y_3^2<1 \right\}.
		\end{equation*}
		Define 
		\begin{equation*}
			\boldsymbol{u}_\varepsilon(\boldsymbol{x}) := \nabla\times \bigl(%
			\psi_\varepsilon(x_2,x_3)\boldsymbol{e}_1\bigr) = \bigl(0,\partial_3\psi_%
			\varepsilon, -\partial_2\psi_\varepsilon\bigr).
		\end{equation*}
		Then 
		\begin{equation*}
			\nabla\cdot\boldsymbol{u}_\varepsilon=0, \qquad \nabla\times(\boldsymbol{u}%
			_\varepsilon\times\boldsymbol{e}_1)=0.
		\end{equation*}
		Thus $\boldsymbol{u}_\varepsilon\in\mathbb{X}_{\boldsymbol{F}}$ and 
		\begin{equation*}
			\|\boldsymbol{u}_\varepsilon\|_{\mathbb{X}_{\boldsymbol{F}}}^{2} = \|%
			\boldsymbol{u}_\varepsilon\|_{L^{2}(\Omega)}^{2}.
		\end{equation*}
		The field is constant along each flow line, so the flow change-of-variables
		formula gives 
		\begin{equation*}
			\|\boldsymbol{u}_\varepsilon\|_{L^{2}(\Omega)}^{2} = \int_{A_\varepsilon}
			\tau(y_2,y_3) |\nabla\psi_\varepsilon(y_2,y_3)|^{2} \,\mathrm{d}y_2\,\mathrm{%
				d}y_3.
		\end{equation*}
		On the other hand, 
		\begin{equation*}
			\|\gamma_-\boldsymbol{u}_\varepsilon\|_ {L^{2}(\Gamma_{\mathrm{in}}; \,|%
				\boldsymbol{F}\cdot\boldsymbol{n}|\,\mathrm{d}\sigma)}^{2} =
			\int_{A_\varepsilon} |\nabla\psi_\varepsilon|^{2} \,\mathrm{d}y_2\,\mathrm{d}%
			y_3.
		\end{equation*}
		Since $\tau<\varepsilon$ on $A_\varepsilon$, 
		\begin{equation*}
			\|\boldsymbol{u}_\varepsilon\|_{\mathbb{X}_{\boldsymbol{F}}}^{2} \le
			\varepsilon \int_{A_\varepsilon}|\nabla\psi_\varepsilon|^{2} \,\mathrm{d}%
			y_2\,\mathrm{d}y_3.
		\end{equation*}
		Consequently, 
		\begin{equation*}
			\frac{ \|\gamma_-\boldsymbol{u}_\varepsilon\|_ {L^{2}(\Gamma_{\mathrm{in}};
					\,|\boldsymbol{F}\cdot\boldsymbol{n}|\,\mathrm{d}\sigma)} }{ \|\boldsymbol{u}%
				_\varepsilon\|_{\mathbb{X}_{\boldsymbol{F}}} } \ge \varepsilon^{-1/2},
		\end{equation*}
		so the unweighted estimate fails as $\varepsilon\rightarrow0$.
		
		More generally, for $\alpha<1$, 
		\begin{equation*}
			\int_{A_\varepsilon}\tau^\alpha|\nabla\psi_\varepsilon|^2 \,\mathrm{d}y_2\,%
			\mathrm{d}y_3 \ge \varepsilon^{\alpha-1}
			\int_{A_\varepsilon}\tau|\nabla\psi_\varepsilon|^2 \,\mathrm{d}y_2\,\mathrm{d%
			}y_3,
		\end{equation*}
		because $\tau<\varepsilon$ and $\alpha-1<0$. Thus the trace estimate also
		fails with the weight $\tau^\alpha$ for every $\alpha<1$.
	\end{remark}
	
	Since $\boldsymbol{F}\in \lbrack C^{2}(\overline{\Omega })]^{d}$ and $\Gamma
	_{\mathrm{in}}\subset \partial \Omega $ is compact, the function $%
	\boldsymbol{x}\mapsto |\boldsymbol{F}(\boldsymbol{x})\cdot \boldsymbol{n}(%
	\boldsymbol{x})|$ is continuous on $\Gamma _{\mathrm{in}}$, hence bounded:
	there exists $M>0$ such that 
	\begin{equation*}
		0\leq \tau |\boldsymbol{F}(\boldsymbol{x})\cdot \boldsymbol{n}(\boldsymbol{x}%
		)|\leq T_{\mathrm{fill}}M\qquad \text{for all }\boldsymbol{x}\in \Gamma _{%
			\mathrm{in}}.
	\end{equation*}%
	Therefore, for any $\boldsymbol{g}\in \lbrack L^{2}(\Gamma _{\mathrm{in}%
	})]^{d}$, 
	\begin{equation*}
		\Vert \boldsymbol{g}\Vert _{\lbrack L^{2}(\Gamma _{\mathrm{in}};\,\tau |%
			\boldsymbol{F}\cdot \boldsymbol{n}|)]^{d}}^{2}=\int_{\Gamma _{\mathrm{in}}}|%
		\boldsymbol{g}|^{2}\,\tau |\boldsymbol{F}\cdot \boldsymbol{n}|\,d\sigma \leq
		T_{\mathrm{fill}}M\int_{\Gamma _{\mathrm{in}}}|\boldsymbol{g}|^{2}\,d\sigma
		=T_{\mathrm{fill}}M\,\Vert \boldsymbol{g}\Vert _{\lbrack L^{2}(\Gamma _{%
				\mathrm{in}})]^{d}}^{2}.
	\end{equation*}%
	Thus, the injection of $[L^{2}(\Gamma _{\mathrm{in}})]^{d}$ in $%
	[L^{2}(\Gamma _{\mathrm{in}};\,\tau |\boldsymbol{F}\cdot \boldsymbol{n}%
	|)]^{d}$ is continuous, and since the weight $\tau |\boldsymbol{F}(%
	\boldsymbol{x})\cdot \boldsymbol{n}(\boldsymbol{x})|$ is bounded, and
	positive, the space $\left[ C^{\infty }\left( \Gamma _{\mathrm{in}}\right) %
	\right] ^{d}$ is dense in $[L^{2}(\Gamma _{\mathrm{in}};\,\tau |\boldsymbol{F%
	}\cdot \boldsymbol{n}|)]^{d}$ \cite[Proposition A.3]{duarte2024introduction}.
	
	In particular, since $H^{1}(\Gamma _{\mathrm{in}})\hookrightarrow
	L^{2}(\Gamma _{\mathrm{in}})$, we also have 
	\begin{equation*}
		\lbrack H^{1}(\Gamma _{\mathrm{in}})]^{d}\hookrightarrow \lbrack
		L^{2}(\Gamma _{\mathrm{in}};\,\tau |\boldsymbol{F}\cdot \boldsymbol{n}%
		|)]^{d}.
	\end{equation*}%
	Thus every $\boldsymbol{g}\in \lbrack H^{1}(\Gamma _{\mathrm{in}})]^{d}$
	belongs to the weighted $L^{2}$ space with weight $\tau |\boldsymbol{F}\cdot 
	\boldsymbol{n}|$.
	
	
	\section{The curl--div system}\label{sec:curl-div}
	
	In this section, we establish a well-posedness result for the stationary
	curl--div system 
	\begin{equation}
		\begin{cases}
			\nabla \times (\boldsymbol{u}\times \boldsymbol{F})=\boldsymbol{z}, & \text{
				in }\Omega , \\[1mm] 
			\nabla \cdot \boldsymbol{u}=0, & \text{in }\Omega , \\[1mm] 
			\boldsymbol{u}=\boldsymbol{g}, & \text{on }\Gamma_{\mathrm{in}} .%
		\end{cases}
		\label{eq:div-curl}
	\end{equation}
	This system is overdetermined: it consists of four scalar equations for the
	three components of $\boldsymbol{u}$. Hence solvability requires a
	compatibility condition on the data $\boldsymbol{g}$.
	
	For $\boldsymbol{g}\in [H^{1}(\Gamma_{\mathrm{in}})]^{d}$, we introduce the
	compatibility functional 
	\begin{equation}  \label{C(g,h)}
		{\mathcal{C}}(\boldsymbol{g}) := (\boldsymbol{F}\cdot\boldsymbol{n}%
		)\nabla_{\Gamma_{\mathrm{in}}}\cdot\boldsymbol{g} +\bigl( D\boldsymbol{F}\,%
		\boldsymbol{g} - D_{\Gamma}\boldsymbol{g}\,\boldsymbol{F}_{\Gamma} \bigr)%
		\cdot\boldsymbol{n} \qquad \mathrm{on}\ \Gamma_{\mathrm{in}},
	\end{equation}
	where 
	\begin{equation*}
		\boldsymbol{F}_{\Gamma}:=\boldsymbol{F}-(\boldsymbol{F}\cdot \boldsymbol{n}
		)\,\boldsymbol{n}
	\end{equation*}
	denotes the tangential component of $\boldsymbol{F}$ on $\Gamma_{\mathrm{in}
	} $, $\nabla_{\Gamma_{\mathrm{in}}}\cdot$ is the surface divergence on $%
	\Gamma_{\mathrm{in}}$, $D_{\Gamma}\boldsymbol{g}$ is the tangential Jacobian
	of $\boldsymbol{g}$.
	
	\begin{proposition}
		\label{Proposition2} Assume that $\Omega$ is sufficiently smooth and that $%
		\boldsymbol{F}$ satisfies \ref{F}. Let $(\boldsymbol{g},\boldsymbol{z})\in
		[H^{1}(\Gamma_{\mathrm{in}})]^{d}\times \mathbb{H}$. Assume that the flow $%
		\boldsymbol{X}$ is \emph{$\Omega$-filling and one-to-one} in the sense of
		Definition~\ref{Definition-Omega-Filing}. Then system \eqref{eq:div-curl}
		admits a unique solution $\boldsymbol{u}$ if and only if 
		\begin{equation}  \label{compat}
			\mathcal{C}(\boldsymbol{g})=0 \qquad \text{on }\Gamma_{\mathrm{in}}.
		\end{equation}
		
		Furthermore, the solution satisfies the stability estimate 
		\begin{equation}
			\Vert \boldsymbol{u}\Vert_{\mathbb{X}_{\boldsymbol{F}}}^{2} \leq C\left(
			\Vert \boldsymbol{z}\Vert_{\mathbb{H}}^{2} + \Vert \boldsymbol{g}
			\Vert_{[H^{1}(\Gamma_{\mathrm{in}})]^{d}}^{2} \right),  \label{eq:estimate}
		\end{equation}
		where $C$ depends only on $\Omega$ and $\boldsymbol{F}$.
	\end{proposition}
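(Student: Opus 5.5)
Using \eqref{First-order}, I reduce the first equation of \eqref{eq:div-curl} to a linear transport system along the characteristics of $\boldsymbol{F}$:
\begin{equation*}
(\boldsymbol{F}\cdot\nabla)\boldsymbol{u}-(\boldsymbol{u}\cdot\nabla)\boldsymbol{F}=\boldsymbol{z}\quad\text{in }\Omega,\qquad \boldsymbol{u}=\boldsymbol{g}\ \text{on }\Gamma_{\mathrm{in}}.
\end{equation*}
This identity holds only when $\nabla\cdot\boldsymbol{u}=0$, so I treat it together with the divergence constraint. In flow coordinates $(t,\boldsymbol{x}_0)\in\mathcal{D}$, which are legitimate by Lemma~\ref{lem:flow-injective} and the $\Omega$-filling assumption, $\boldsymbol{U}(t,\boldsymbol{x}_0):=\boldsymbol{u}(\boldsymbol{X}(t,\boldsymbol{x}_0))$ solves the linear ODE
\begin{equation*}
\partial_t\boldsymbol{U}=D\boldsymbol{F}(\boldsymbol{X})\,\boldsymbol{U}+\boldsymbol{z}\circ\boldsymbol{X},\qquad \boldsymbol{U}(0,\boldsymbol{x}_0)=\boldsymbol{g}(\boldsymbol{x}_0).
\end{equation*}
Duhamel's formula with the resolvent $R(t,s;\boldsymbol{x}_0)$ of $D\boldsymbol{F}\circ\boldsymbol{X}$ then gives an explicit candidate $\boldsymbol{u}$ on $\Omega\setminus\mathcal{N}$, unique along every characteristic. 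Uniqueness for \eqref{eq:div-curl} follows at once: the difference of two solutions has $\boldsymbol{z}=0$ and $\boldsymbol{g}=0$, so it vanishes on almost every flow line, hence a.e.\ in $\Omega$.

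For the necessity of \eqref{compat}, I set $\phi:=\nabla\cdot\boldsymbol{u}$ for the transported candidate. Taking the divergence of $\nabla\times(\boldsymbol{u}\times\boldsymbol{F})=\boldsymbol{z}$ gives $0=\nabla\cdot\boldsymbol{z}$, which carries no information. Instead, I would take the divergence of the expanded transport form directly. Using $\nabla\cdot\boldsymbol{F}=0$, one finds
\begin{equation*}
\nabla\cdot\bigl((\boldsymbol{F}\cdot\nabla)\boldsymbol{u}-(\boldsymbol{u}\cdot\nabla)\boldsymbol{F}\bigr)=(\boldsymbol{F}\cdot\nabla)\phi,
\end{equation*}
since the terms $\partial_iF_j\partial_ju_i$ cancel and $\boldsymbol{u}\cdot\nabla(\nabla\cdot\boldsymbol{F})=0$. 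The full curl identity reads $\nabla\times(\boldsymbol{u}\times\boldsymbol{F})=(\boldsymbol{F}\cdot\nabla)\boldsymbol{u}-(\boldsymbol{u}\cdot\nabla)\boldsymbol{F}-\phi\boldsymbol{F}$, so the transported candidate actually satisfies $\nabla\times(\boldsymbol{u}\times\boldsymbol{F})=\boldsymbol{z}-\phi\boldsymbol{F}$. Taking the divergence yields $0=\nabla\cdot\boldsymbol{z}-\nabla\cdot(\phi\boldsymbol{F})$, i.e.\ $(\boldsymbol{F}\cdot\nabla)\phi=0$. Thus $\phi$ is constant along characteristics, and $\phi\equiv0$ holds if and only if $\phi|_{\Gamma_{\mathrm{in}}}=0$.

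The key computation is to express $\phi$ on $\Gamma_{\mathrm{in}}$ in terms of the data. There I split $\nabla\cdot\boldsymbol{u}=\nabla_{\Gamma}\cdot\boldsymbol{u}_{\Gamma}$-type tangential parts plus $\partial_n(\boldsymbol{u}\cdot\boldsymbol{n})$, up to curvature terms. The tangential derivatives are known from $\boldsymbol{g}$. The normal derivative is recovered from the transport equation itself: $\boldsymbol{F}\cdot\nabla\boldsymbol{u}=(\boldsymbol{F}\cdot\boldsymbol{n})\partial_n\boldsymbol{u}+D_\Gamma\boldsymbol{g}\,\boldsymbol{F}_\Gamma$, which is solvable because $\boldsymbol{F}\cdot\boldsymbol{n}\neq0$ on $\Gamma_{\mathrm{in}}$. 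Multiplying by $\boldsymbol{F}\cdot\boldsymbol{n}$ should give
\begin{equation*}
(\boldsymbol{F}\cdot\boldsymbol{n})\,\phi|_{\Gamma_{\mathrm{in}}}=\mathcal{C}(\boldsymbol{g})-\boldsymbol{z}\cdot\boldsymbol{n},
\end{equation*}
and $\boldsymbol{z}\cdot\boldsymbol{n}=0$ because $\boldsymbol{z}\in\mathbb{H}$. Hence $\phi\equiv0$ if and only if $\mathcal{C}(\boldsymbol{g})=0$, which proves both directions of the equivalence; sufficiency then also gives $\nabla\times(\boldsymbol{u}\times\boldsymbol{F})=\boldsymbol{z}$ exactly. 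This boundary identity is the main obstacle. The sign conventions in $\mathcal{C}$ and the curvature terms in the surface divergence have to match exactly, and $\phi$ has only weak regularity, with $\boldsymbol{z}\in L^2$. I would first verify the identity for smooth $\boldsymbol{g}$ and $\boldsymbol{z}$, then pass to the limit using the density facts noted after Remark~\ref{rem:sharpness-tau-weight} together with the weak formulation of $(\boldsymbol{F}\cdot\nabla)\phi=0$.

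For the estimate \eqref{eq:estimate}, Gronwall's inequality along characteristics with $\tau\le T_{\mathrm{fill}}$ gives
\begin{equation*}
|\boldsymbol{U}(t)|^2\le C\Bigl(|\boldsymbol{g}(\boldsymbol{x}_0)|^2+T_{\mathrm{fill}}\int_0^{\tau}|\boldsymbol{z}\circ\boldsymbol{X}|^2\,ds\Bigr),
\end{equation*}
with $C=e^{2T_{\mathrm{fill}}\|D\boldsymbol{F}\|_\infty}$. I integrate in $t$ over $(0,\tau(\boldsymbol{x}_0))$ and against $|\boldsymbol{F}\cdot\boldsymbol{n}|\,d\sigma$, which is the volume element in flow coordinates since $\nabla\cdot\boldsymbol{F}=0$. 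This yields
\begin{equation*}
\|\boldsymbol{u}\|_{L^2}^2\le C\bigl(\|\boldsymbol{g}\|^2_{L^2(\Gamma_{\mathrm{in}};\tau|\boldsymbol{F}\cdot\boldsymbol{n}|)}+\|\boldsymbol{z}\|^2_{\mathbb{H}}\bigr)\le C\bigl(\|\boldsymbol{g}\|^2_{H^1}+\|\boldsymbol{z}\|^2_{\mathbb{H}}\bigr),
\end{equation*}
using the embedding shown after Remark~\ref{rem:sharpness-tau-weight}. Since $\nabla\times(\boldsymbol{u}\times\boldsymbol{F})=\boldsymbol{z}$, adding $\|\boldsymbol{z}\|_{\mathbb{H}}^2$ gives the full $\mathbb{X}_{\boldsymbol{F}}$ norm and hence \eqref{eq:estimate}.
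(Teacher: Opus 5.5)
Your proposal is correct and follows essentially the same route as the paper's appendix proof. The shared steps are: Duhamel's formula along characteristics with the resolvent of $D\boldsymbol{F}\circ\boldsymbol{X}$; the $L^2$ bound via the flow change of variables and the weighted inflow space; the transport equation $(\boldsymbol{F}\cdot\nabla)q=0$ for $q=\nabla\cdot\boldsymbol{u}$; the boundary identity from the tangential-divergence formula; and density. The one slip is a sign: the computation actually gives $(\boldsymbol{F}\cdot\boldsymbol{n})\phi|_{\Gamma_{\mathrm{in}}}=\mathcal{C}(\boldsymbol{g})+\boldsymbol{z}\cdot\boldsymbol{n}$, which is harmless since $\boldsymbol{z}\cdot\boldsymbol{n}=0$. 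Applying that single identity to the possibly non-solenoidal transported candidate for both directions is, if anything, cleaner than the paper's necessity step, which uses $\nabla\cdot\boldsymbol{u}_n=0$ for smooth approximants that need not be divergence-free.
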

	
	\begin{proof}
		See Appendix~\ref{AppendixB: proof curl-div}.
	\end{proof}
	
	\begin{remark}
		System \eqref{eq:div-curl} is a first-order transport system and can be
		recast in the form \eqref{First-order}. Consequently, a natural route to
		well-posedness is through the characteristic flow associated with $%
		\boldsymbol{F}$. In particular, in the divergence-free case $\nabla\cdot 
		\boldsymbol{F}=0$, a coercivity argument based solely on the transport
		bilinear form is not available.
		
		To make this precise, define 
		\begin{equation*}
			\Gamma_{\mathrm{out}} :=\{x\in\partial\Omega:\boldsymbol{F}(x)\cdot 
			\boldsymbol{n}(x)>0\},
		\end{equation*}
		Temporarily ignore the constraint $\nabla\cdot\boldsymbol{u}=0$. Define 
		\begin{equation*}
			\mathbb{W}:=\left\{\boldsymbol{u}\in [H^{1}(\Omega)]^3 \,:\, \boldsymbol{u}%
			=0\ \mathrm{on }\ \Gamma_{\mathrm{in}}\right\}
		\end{equation*}
		and consider the transport bilinear form 
		\begin{equation*}
			a(\boldsymbol{u},\boldsymbol{w}) :=\int_{\Omega}\Big( (\boldsymbol{F}
			\cdot\nabla)\boldsymbol{u} -(D\boldsymbol{F})\boldsymbol{u}\Big)\cdot 
			\boldsymbol{w}\,\mathrm{d}\boldsymbol{x},
		\end{equation*}
		where $(D\boldsymbol{F})_{ij}=\partial_{j}F_{i}$. Taking $\boldsymbol{w}= 
		\boldsymbol{u}$ and using $\nabla\cdot\boldsymbol{F}=0$, integration by
		parts yields 
		\begin{equation*}
			a(\boldsymbol{u},\boldsymbol{u}) =\frac12\int_{\partial\Omega} ( \boldsymbol{%
				F}\cdot\boldsymbol{n})|\boldsymbol{u}|^{2}\,\mathrm{d}S -\int_{\Omega}%
			\boldsymbol{u}^{T}(D\boldsymbol{F})\boldsymbol{u} \,\mathrm{d} \boldsymbol{x}%
			.
		\end{equation*}
		Since $\boldsymbol{u}=0$ on $\Gamma_{\mathrm{in}}$, the boundary term
		reduces to 
		\begin{equation*}
			a(\boldsymbol{u},\boldsymbol{u}) =\frac12\int_{\Gamma_{\mathrm{out}}} ( 
			\boldsymbol{F}\cdot\boldsymbol{n})|\boldsymbol{u}|^{2}\,\mathrm{d}S
			-\int_{\Omega}\boldsymbol{u}^{T}(D\boldsymbol{F})\boldsymbol{u} \,\mathrm{d} 
			\boldsymbol{x}.
		\end{equation*}
		On $\Gamma_{\mathrm{out}}$, we have $\boldsymbol{F}\cdot\boldsymbol{n}>0$,
		so the boundary contribution is nonnegative. Hence the volume term alone
		cannot be coercive: a pointwise negativity condition of the form 
		\begin{equation*}
			\boldsymbol{\xi}^{T}(D\boldsymbol{F})(\boldsymbol{x})\,\boldsymbol{\xi} \le
			-c_{0}|\boldsymbol{\xi}|^{2} \qquad\text{for all }\boldsymbol{\xi}\in 
			\mathbb{R}^{d} \ \text{and a.e. }\boldsymbol{x}\in\Omega
		\end{equation*}
		would be sufficient, but it is incompatible with $\nabla\cdot\boldsymbol{F}
		=0$. Indeed, summing the inequality for $\boldsymbol{\xi}=e_{i}$ over the
		canonical basis $\{e_{1},\dots,e_{d}\}$ gives 
		\begin{equation*}
			\nabla\cdot\boldsymbol{F} =\mathrm{trace}(D\boldsymbol{F}) =\sum_{i=1}^{d}
			e_{i}^{T}(D\boldsymbol{F})e_{i} \le -dc_{0}<0,
		\end{equation*}
		which contradicts $\nabla\cdot\boldsymbol{F}=0$.
		
		Therefore, the stationary curl--div system \eqref{eq:div-curl} is properly
		analyzed by propagating the inflow data along the characteristics of $%
		\boldsymbol{F}$.
	\end{remark}
	
	\begin{remark}
		The curl-div system \eqref{eq:div-curl} is a particular case of the general
		system 
		\begin{equation*}
			\begin{cases}
				\nabla \times (\boldsymbol{A}\boldsymbol{u})=\boldsymbol{z}, & \mathrm{in}\
				\Omega , \\[1mm] 
				\nabla \cdot \boldsymbol{u}=0, & \mathrm{in}\ \Omega , \\[1mm] 
				\text{suitable boundary conditions}, & \mathrm{on}\ \Gamma _{\mathrm{in}},%
			\end{cases}%
		\end{equation*}
		which has been studied under the assumption that $\boldsymbol{A}$ is
		uniformly positive definite; see, for example, \cite{alonso2019curl}. In the
		present setting, 
		\begin{equation*}
			\boldsymbol{A}\boldsymbol{u}=\boldsymbol{u}\times \boldsymbol{F},
		\end{equation*}
		and hence $\boldsymbol{A}$ is singular, since $\boldsymbol{u}\times 
		\boldsymbol{F}=\boldsymbol{0}$ whenever $\boldsymbol{u}$ is parallel to $%
		\boldsymbol{F}$. Consequently, the results of \cite{alonso2019curl} do not
		apply directly to the present degenerate setting. A systematic
		characterization of well-posedness for such singular curl-div systems is
		beyond the scope of this work.
	\end{remark}
	
	\section{Main results}\label{sec:main results}
	
	We now establish a global stability estimate for the inverse problem,
	showing that the unknown source $\boldsymbol{u}$ depends continuously on the
	observation data and its prescribed boundary trace.
	
	To encompass different measurement configurations, we introduce a bounded
	linear observation operator 
	\begin{equation}
		\mathcal{L}:\mathbb{Y}\longrightarrow\mathbb{H}_{\mathrm{obs}},  \label{F}
	\end{equation}
	where $\mathbb{Y}$ is the state space defined in~\eqref{Y} and $\mathbb{H}_{ 
		\mathrm{obs}}$ is the corresponding observation space. The results below
	depend on $\mathcal{L}$ only through the injectivity or stability property
	assumed for the associated parabolic inverse source problem.
	
	The framework includes, in particular, internal observations, boundary flux
	observations, and final-time measurements, such as 
	\begin{equation}
		\mathcal{L}(\boldsymbol{b}) = \boldsymbol{b}|_{(0,T)\times\omega_{\mathrm{\
					obs }}}, \qquad \mathbb{H}_{\mathrm{obs}} = L^2(0,T;[L^2(\omega_{\mathrm{obs}
		})]^d),  \label{internal-F}
	\end{equation}
	or
	
	\begin{equation}
		\mathcal{L}(\boldsymbol{b}) = \boldsymbol{b}(T,\cdot)|_{\omega_{\mathrm{obs}
		}}, \qquad \mathbb{H}_{\mathrm{obs}} = [L^2(\omega_{\mathrm{obs}})]^d.
		\label{Final-F}
	\end{equation}
	Such observation configurations are standard in parabolic inverse source
	problems; see, e.g., \cite%
	{KianSambouSoccorsi2020,Yamamoto2009,SakamotoYamamoto2009}.
	
	Accordingly, for system~\eqref{b-system}, we define the generalized
	observation operator 
	\begin{equation}
		\begin{aligned} \Lambda_{\mathcal{L}}:\mathbb{X}_{\boldsymbol{F}}
			&\longrightarrow
			\mathbb{H}_{\mathrm{obs}}\times[H^1(\Gamma_{\mathrm{in}})]^d,\\
			\boldsymbol{u} &\longmapsto \bigl(\mathcal{L}(\boldsymbol{b}),
			\boldsymbol{u}|_{\Gamma_{\mathrm{in}}}\bigr), \end{aligned}  \label{L_gen}
	\end{equation}
	where $\boldsymbol{b}$ is the state generated by $\boldsymbol{v}=\alpha 
	\boldsymbol{u}$ through system~\eqref{b-system}.
	
	\subsection{Stability result}
	
	We first introduce the auxiliary parabolic system for the difference fields.
	For $i=1,2$, let $\boldsymbol{z}_i\in\mathbb{H}$ and let $\boldsymbol{y}_i$
	solve 
	\begin{equation}
		\begin{cases}
			\partial_{t}\boldsymbol{y}_i-\Delta\boldsymbol{y}_i =\alpha(t)\boldsymbol{z}%
			_i+\Delta\boldsymbol{F}, & \text{in }(0,T)\times\Omega, \\[1mm] 
			\nabla\cdot\boldsymbol{y}_i=0, & \text{in }(0,T)\times\Omega, \\[1mm] 
			\boldsymbol{y}_i\cdot\boldsymbol{n}=0,\qquad \boldsymbol{n}%
			\times(\nabla\times\boldsymbol{y}_i)=\boldsymbol{0}, & \text{on }%
			(0,T)\times\partial\Omega, \\[1mm] 
			\boldsymbol{y}_i|_{t=0}=\boldsymbol{y}_{0}, & \text{in }\Omega.%
		\end{cases}
		\label{eq:abstract-inverseh}
	\end{equation}
	By Theorem~\ref{theorem:existence}, for $\alpha\boldsymbol{z}_i+\Delta%
	\boldsymbol{F}\in L^{2}(0,T;\mathbb{H})$, the unique solution to %
	\eqref{eq:abstract-inverseh} belongs to $\mathbb{Y}$ given in~\eqref{Y}. The
	difference $\boldsymbol{y}:=\boldsymbol{y}_1-\boldsymbol{y}_2$ solves 
	\begin{equation}
		\begin{cases}
			\partial_{t}\boldsymbol{y}-\Delta \boldsymbol{y} =\alpha(t)(\boldsymbol{z}_1-%
			\boldsymbol{z}_2), & \text{in }(0,T)\times\Omega, \\[1mm] 
			\nabla\cdot\boldsymbol{y}=0, & \text{in }(0,T)\times\Omega, \\[1mm] 
			\boldsymbol{y}\cdot\boldsymbol{n}=0,\qquad \boldsymbol{n}\times(\nabla\times%
			\boldsymbol{y})=\boldsymbol{0}, & \text{on }(0,T)\times\partial\Omega, \\%
			[1mm] 
			\boldsymbol{y}|_{t=0}=\boldsymbol{0}, & \text{in }\Omega.%
		\end{cases}
		\label{eq:abstract-inverseh-diff}
	\end{equation}
	For the magnetic perturbations $\boldsymbol{b}_i$ generated by $\boldsymbol{v%
	}_i=\alpha\boldsymbol{u}_i$ from the same initial data $\boldsymbol{b}_0$
	and the same background field $\boldsymbol{F}$, one has 
	\begin{equation}
		\boldsymbol{b}_1-\boldsymbol{b}_2 =\boldsymbol{y}_1-\boldsymbol{y}_2,
		\label{eq:b-y-difference}
	\end{equation}
	
	\begin{theorem}
		\label{main-stability}  Assume that $\boldsymbol{F}$ satisfies~%
		\eqref{F-assumption} and let  $\gamma_{-}\boldsymbol{u}_i\in[H^{1}(\Gamma_{%
			\mathrm{in}})]^{3}$  (in the trace sense) be given for $i=1,2$. Let  $%
		\Theta:[0,\infty)\to[0,\infty)$ be a continuous function with  $%
		\lim_{s\to0}\Theta(s)=0$. Assume that:
		
		\begin{enumerate}
			\item[$\mathrm{(i)}$] $\alpha\in L^{2}(0,T)$ is such that the auxiliary 
			parabolic inverse source problem~\eqref{eq:abstract-inverseh-diff} admits 
			the stability estimate  
			\begin{equation}
				\|\boldsymbol{z}_1-\boldsymbol{z}_2\|_{\mathbb{H}}^{2} \leq \Theta\left( \|%
				\mathcal{L}(\boldsymbol{y}_1-\boldsymbol{y}_2)\|_ {\mathbb{H}_{\mathrm{obs}%
				}}^{2} \right),  \label{eq:parabolic-stab}
			\end{equation}
			where $\boldsymbol{y}_i$ is the solution of  \eqref{eq:abstract-inverseh}
			associated with $\boldsymbol{z}_i$ for  $i=1,2$, and $\mathcal{L}$ is the
			observation operator defined  in~\eqref{F}.
			
			\item[$\mathrm{(ii)}$] The boundary measurement difference  $\gamma_{-}(%
			\boldsymbol{u}_1-\boldsymbol{u}_2)$ satisfies the  compatibility condition  
			\begin{equation}
				\mathcal{C}\left( \gamma_{-}(\boldsymbol{u}_1-\boldsymbol{u}_2) \right)=0,
				\qquad\text{on }\Gamma_{\mathrm{in}},  \label{C(h,g)=0}
			\end{equation}
			where $\mathcal{C}$ is defined in~\eqref{C(g,h)}.
			
			\item[$\mathrm{(iii)}$] The background vector field $\boldsymbol{F}$ 
			satisfies~\eqref{F-assumption} and the flow $\boldsymbol{X}$ is  \emph{$%
				\Omega$-filling and one-to-one} in the sense of  Definition~\ref%
			{Definition-Omega-Filing}. 
		\end{enumerate}
		
		Then the velocity difference  $\boldsymbol{u}_1-\boldsymbol{u}_2\in\mathbb{X}%
		_{\boldsymbol{F}}$  satisfies the stability estimate  
		\begin{equation}
			\begin{aligned} \|\boldsymbol{u}_1-\boldsymbol{u}_2\|_
				{\mathbb{X}_{\boldsymbol{F}}}^{2} &\leq C\Bigl[ \Theta\left(
				\|\mathcal{L}(\boldsymbol{y}_1-\boldsymbol{y}_2)\|_
				{\mathbb{H}_{\mathrm{obs}}}^{2}
				\right)+\|\gamma_{-}(\boldsymbol{u}_1-\boldsymbol{u}_2)\|_
				{[H^{1}(\Gamma_{\mathrm{in}})]^{3}}^{2} \Bigr], \end{aligned}
			\label{estimat-stab}
		\end{equation}
		where $C>0$ is a constant independent of $\boldsymbol{u}_1$ and  $%
		\boldsymbol{u}_2$.
	\end{theorem}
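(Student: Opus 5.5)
The proof proceeds in two stages that mirror the parabolic--hyperbolic splitting. Set $\boldsymbol{u}:=\boldsymbol{u}_1-\boldsymbol{u}_2$. By linearity $\boldsymbol{u}\in\mathbb{X}_{\boldsymbol{F}}$. Define $\boldsymbol{z}_i:=\nabla\times(\boldsymbol{u}_i\times\boldsymbol{F})\in\mathbb{H}$, so that $\boldsymbol{z}:=\boldsymbol{z}_1-\boldsymbol{z}_2=\nabla\times(\boldsymbol{u}\times\boldsymbol{F})$.

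\emph{Stage 1 (parabolic).} Since $\boldsymbol{v}_i=\alpha\boldsymbol{u}_i$, the source in \eqref{b-system} is $\boldsymbol{h}_i=\Delta\boldsymbol{F}+\alpha(t)\boldsymbol{z}_i$. Hence $\boldsymbol{b}_i$ coincides with the solution $\boldsymbol{y}_i$ of \eqref{eq:abstract-inverseh} with $\boldsymbol{y}_0=\boldsymbol{b}_0$, by the uniqueness in Theorem~\ref{theorem:existence}. This justifies \eqref{eq:b-y-difference}, and the difference solves \eqref{eq:abstract-inverseh-diff} with source $\alpha(\boldsymbol{z}_1-\boldsymbol{z}_2)$. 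Assumption (i) then gives directly
\begin{equation*}
\|\boldsymbol{z}\|_{\mathbb{H}}^{2}\le\Theta\bigl(\|\mathcal{L}(\boldsymbol{y}_1-\boldsymbol{y}_2)\|_{\mathbb{H}_{\mathrm{obs}}}^{2}\bigr).
\end{equation*}

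\emph{Stage 2 (hyperbolic).} The difference $\boldsymbol{u}$ solves the curl--div system \eqref{eq:div-curl} with right-hand side $\boldsymbol{z}\in\mathbb{H}$, $\nabla\cdot\boldsymbol{u}=0$, and inflow datum $\boldsymbol{g}:=\gamma_-\boldsymbol{u}=\gamma_-\boldsymbol{u}_1-\gamma_-\boldsymbol{u}_2\in[H^1(\Gamma_{\mathrm{in}})]^3$. The trace is meaningful and linear by Proposition~\ref{proposition-trace}. Assumption (ii) gives $\mathcal{C}(\boldsymbol{g})=0$, and assumption (iii) gives the geometric hypotheses, so Proposition~\ref{Proposition2} applies. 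The system therefore has a unique solution, and it satisfies \eqref{eq:estimate}:
\begin{equation*}
\|\boldsymbol{u}\|_{\mathbb{X}_{\boldsymbol{F}}}^2\le C\bigl(\|\boldsymbol{z}\|_{\mathbb{H}}^2+\|\boldsymbol{g}\|_{[H^1(\Gamma_{\mathrm{in}})]^3}^2\bigr).
\end{equation*}
Inserting the Stage~1 bound gives \eqref{estimat-stab}, with $C$ depending only on $\Omega$ and $\boldsymbol{F}$.

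\emph{Main obstacle.} The delicate step is identifying the given difference $\boldsymbol{u}$ with \emph{the} solution furnished by Proposition~\ref{Proposition2}, so that the estimate \eqref{eq:estimate} can be applied to it. The estimate is stated for the constructed solution, so we need the uniqueness part: any $\mathbb{X}_{\boldsymbol{F}}$ field with the same data must coincide with it. I would argue this through the characteristics, using the transport form $(\boldsymbol{F}\cdot\nabla)\boldsymbol{u}=\boldsymbol{z}+(\boldsymbol{u}\cdot\nabla)\boldsymbol{F}$.

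\begin{itemize}
\item As in the proof of Proposition~\ref{proposition-trace}, $t\mapsto\boldsymbol{u}(\boldsymbol{X}(t,\boldsymbol{x}_0))$ is $H^1$ along a.e.\ flow line.
\item It therefore solves a linear ODE with initial value $\boldsymbol{g}(\boldsymbol{x}_0)$.
\item By Lemma~\ref{lem:flow-injective} and the $\Omega$-filling property, these flow lines cover $\Omega$ up to a null set, so the ODE determines $\boldsymbol{u}$ a.e.
\end{itemize}

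Alternatively, the same route yields the estimate directly, bypassing the identification. Gronwall along each line with $\tau\le T_{\mathrm{fill}}$ bounds $|\boldsymbol{u}(\boldsymbol{X}(t,\boldsymbol{x}_0))|^2$ by $|\boldsymbol{g}(\boldsymbol{x}_0)|^2+T_{\mathrm{fill}}\int_0^\tau|\boldsymbol{z}\circ\boldsymbol{X}|^2$. Integrating with weight $|\boldsymbol{F}\cdot\boldsymbol{n}|\,d\sigma$ via the flow change of variables controls $\|\boldsymbol{u}\|_{L^2}^2$ by $\|\boldsymbol{z}\|^2_{\mathbb{H}}$ plus the weighted trace norm, which is dominated by the $H^1(\Gamma_{\mathrm{in}})$ norm by the embedding noted after Remark~\ref{rem:sharpness-tau-weight}. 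Adding $\|\boldsymbol{z}\|^2_{\mathbb{H}}$ completes the $\mathbb{X}_{\boldsymbol{F}}$ norm.
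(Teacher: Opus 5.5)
Your proposal is correct and is essentially the paper's argument: assumption (i) bounds $\|\boldsymbol{z}\|_{\mathbb{H}}^{2}$, and the estimate \eqref{eq:estimate} of Proposition~\ref{Proposition2} bounds $\|\boldsymbol{u}\|_{\mathbb{X}_{\boldsymbol{F}}}^{2}$. The only cosmetic difference is that the paper applies Proposition~\ref{Proposition2} to $\alpha(t)\boldsymbol{u}$ for a.e.\ $t$, integrates in time and divides by $\|\alpha\|_{L^{2}(0,T)}^{2}$, whereas you apply it directly to $\boldsymbol{u}$.

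Your concern about identifying $\boldsymbol{u}$ with the constructed solution is justified. The paper's Step~4 applies \eqref{eq:general-estimate} to an arbitrary difference, although that estimate was only derived for the density-constructed solution. Your characteristic ODE/Gronwall argument therefore closes a real gap. It also shows that the bound holds without using (ii), with only the weaker weighted trace norm $L^{2}(\Gamma_{\mathrm{in}};\tau|\boldsymbol{F}\cdot\boldsymbol{n}|\,d\sigma)$ on the right-hand side.
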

	
	\begin{remark}[Hybrid Error Propagation]
		The stability estimate~\eqref{estimat-stab} reflects the hybrid 
		parabolic--hyperbolic nature of the inverse problem~\eqref{problem-inverse}.
		The error in the inflow boundary data propagates into the domain with 
		Lipschitz stability, as expected for the underlying first-order transport 
		equation along the characteristics of $\boldsymbol{F}$. In contrast, the 
		error in the parabolic observation $\mathcal{L}(\boldsymbol{y}_1-  
		\boldsymbol{y}_2)$ is transmitted through the modulus $\Theta$, which may 
		be Lipschitz, H\"{o}lder or logarithmic  \cite%
		{kian2020logarithmic,huang2020stability,imanuvilov2026stability}  for $\mathcal{L}$ given in~\eqref{internal-F}--\eqref{Final-F},  reflecting the
		ill-posedness of the associated parabolic inverse source  problem (these references handle the Dirichlet boundary condition case). Thus, the
		overall stability combines a Lipschitz contribution from  transport with a
		potentially weaker contribution from parabolic source  recovery. Since $%
		\boldsymbol{b}_1-\boldsymbol{b}_2=  \boldsymbol{y}_1-\boldsymbol{y}_2$, the
		same modulus applies to differences  of magnetic observations.
	\end{remark}
		\subsection*{Stability from the terminal observation}
	
	
	The injectivity result of Theorem~\ref{thm:injectivity-beta}(ii) shows that,
	under the nonvanishing condition \eqref{non-vanishing}, the terminal
	observation $\boldsymbol{y}(T)$ determines $\boldsymbol{z}$ uniquely. We now
	quantify this determination. Throughout this section we work with the
	terminal observation $\mathcal{L}(\boldsymbol{y})=\boldsymbol{y}(T)\in D(\Delta_{\mathrm{H}})$. The case of subdomain observation $\omega_{\mathrm{obs}}$ needs more car.

	\begin{theorem}
		\label{thm:lip-stab} Assume that there exists a constant $c_{0}>0$ such that 
		\begin{equation}
			|\gamma _{n}|\geq \frac{c_{0}}{1+\lambda _{n}},\qquad n\geq 1,
			\label{lower-bound}
		\end{equation}
		where $\gamma _{n}$ is defined in \ref{non-vanishing}. Then for every $%
		\boldsymbol{z}\in \mathbb{H}$, the associated solution $\boldsymbol{y}$ of %
		\eqref{eq:abstract-inverseh} satisfies 
		\begin{equation}  \label{eq:lip-stab}
			\|\boldsymbol{z}\|_{\mathbb{H}}^{2}\leq C\Big(\|\boldsymbol{y}(T)\|_{\mathbb{%
					H}}^{2} +\|\Delta_{\mathrm{H}}\boldsymbol{y}(T)\|_{\mathbb{H}}^{2}\Big),
		\end{equation}
		for some constant $C>0$ independent of $\boldsymbol{z}$.
	\end{theorem}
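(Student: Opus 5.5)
The plan is a spectral argument based on the Hodge realization $\Delta_{\mathrm{H}}$ on $\mathbb{H}$ from Section~\ref{subsec:hodge-helmholtz}. Since the estimate is linear in $\boldsymbol{z}$, I will apply it to the homogeneous difference system \eqref{eq:abstract-inverseh-diff}, with zero initial datum and no $\Delta\boldsymbol{F}$ term. This is the setting in which $\gamma_n$ in \eqref{non-vanishing} is defined, and it is the only form used in Theorem~\ref{main-stability}(i). I assume, consistently with \eqref{non-vanishing}, that
\[
\gamma_n=\int_0^T e^{-\lambda_n(T-s)}\alpha(s)\,\mathrm{d}s ,
\]
where $\{\lambda_n\}_{n\ge1}$ are the eigenvalues of $-\Delta_{\mathrm{H}}$, counted with multiplicity, and $\{\boldsymbol{\varphi}_n\}$ are the corresponding eigenfunctions.

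\textbf{Step 1 (eigenbasis).} On a smooth bounded domain the realization $-\Delta_{\mathrm{H}}$ with absolute boundary conditions is self-adjoint and nonnegative. Its form domain embeds compactly in $\mathbb{H}$ (Gaffney inequality plus Rellich), so it has compact resolvent. Hence $\mathbb{H}$ has an orthonormal basis $\{\boldsymbol{\varphi}_n\}$ of eigenfunctions with $0\le\lambda_1\le\lambda_2\le\dots\to\infty$. Possible zero eigenvalues, coming from harmonic Neumann fields, are simply included in the list.

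\textbf{Step 2 (Duhamel in modes).} Write $\boldsymbol{z}=\sum_n z_n\boldsymbol{\varphi}_n$. The strong solution from Theorem~\ref{theorem:existence} is given by the semigroup formula, which makes sense since $\alpha\in L^2(0,T)$. Projecting onto $\boldsymbol{\varphi}_n$ gives the scalar ODE $y_n'+\lambda_n y_n=\alpha(t)z_n$ with $y_n(0)=0$, and therefore
\[
y_n(T)=\gamma_n z_n .
\]
The terminal state $\boldsymbol{y}(T)$ lies in $D(\Delta_{\mathrm{H}})$, as assumed in the setting preceding the theorem. Parseval then gives
\[
\|\boldsymbol{y}(T)\|_{\mathbb{H}}^2=\sum_n|\gamma_n|^2|z_n|^2,
\qquad
\|\Delta_{\mathrm{H}}\boldsymbol{y}(T)\|_{\mathbb{H}}^2=\sum_n\lambda_n^2|\gamma_n|^2|z_n|^2 .
\]

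\textbf{Step 3 (inversion).} By \eqref{lower-bound},
\[
|z_n|^2\le \frac{(1+\lambda_n)^2}{c_0^2}\,|\gamma_n|^2|z_n|^2\le \frac{2}{c_0^2}\,(1+\lambda_n^2)\,|y_n(T)|^2 .
\]
Summing over $n$ yields \eqref{eq:lip-stab} with $C=2/c_0^2$, which is independent of $\boldsymbol{z}$.

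The only genuinely delicate point is Step 1. One must check that the Hodge realization has compact resolvent and a complete eigenbasis in $\mathbb{H}$, including the finite-dimensional kernel, so that the Parseval identities hold. One must also check that the modal formula $y_n(T)=\gamma_n z_n$ is legitimate for $\alpha$ merely in $L^2(0,T)$. Both reduce to standard facts for smooth $\Omega$, namely the Gaffney inequality and mild/strong solution equivalence, so I expect no real obstruction there; the remainder is elementary.
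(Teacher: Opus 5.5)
Your proposal is correct and follows the paper's proof essentially verbatim. You expand $\boldsymbol{y}(T)$ in the eigenbasis of $-\Delta_{\mathrm{H}}$ to get the coefficients $\gamma_n z_n$, invert coefficientwise using \eqref{lower-bound}, and sum with $(1+\lambda)^2\le 2(1+\lambda^2)$, giving $C=2/c_0^2$. Your explicit reduction to the homogeneous difference system (zero initial datum, no $\Delta\boldsymbol{F}$ term) is exactly what the paper's spectral formula for $\boldsymbol{y}(T)$ tacitly assumes, so stating it is a useful clarification rather than a deviation.
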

	
	\begin{proof}
		We write \(\boldsymbol{y}(T)\) in the spectral representation
		\[
		\boldsymbol{y}(T)
		=
		\sum_{n=1}^{\infty}\sum_{k=1}^{m_n}
		\gamma_n\,z_{n,k}\,\boldsymbol{\phi}_{n,k}.
		\]
		Using \eqref{lower-bound} coefficientwise, for each \((n,k)\),
		\[
		|z_{n,k}|
		=
		\frac{|z_{n,k}\gamma_n|}{|\gamma_n|}
		\le
		\frac{1+\lambda_n}{c_0}\,|z_{n,k}\gamma_n|.
		\]
		Squaring and summing over \(n,k\) yields
		\begin{align*}
			\|\boldsymbol{z}\|_{\mathbb{H}}^{2}
			&=
			\sum_{n=1}^{\infty}\sum_{k=1}^{m_n}|z_{n,k}|^2
			\le
			\frac{1}{c_0^2}
			\sum_{n=1}^{\infty}\sum_{k=1}^{m_n}
			(1+\lambda_n)^2\,|z_{n,k}\gamma_n|^2 \\[1mm]
			&=
			\frac{1}{c_0^2}\,
			\|(I-\Delta_{\mathrm{H}})\boldsymbol{y}(T)\|_{\mathbb{H}}^2
			\le
			C\Big(
			\|\boldsymbol{y}(T)\|_{\mathbb{H}}^2
			+
			\|\Delta_{\mathrm{H}}\boldsymbol{y}(T)\|_{\mathbb{H}}^2
			\Big),
		\end{align*}
		where we used that
		\(-\Delta_{\mathrm{H}}\boldsymbol{\phi}_{n,k}
		=\lambda_n\boldsymbol{\phi}_{n,k}\), and the last inequality follows from
		\((1+\lambda)^2\le 2(1+\lambda^2)\) for all \(\lambda\ge 0\).
	\end{proof}
	
	\begin{remark}[Example of a time profile satisfying \eqref{lower-bound}]
		Take the constant profile 
		\begin{equation*}
			\alpha (t)\equiv 1,\qquad t\in \lbrack 0,T].
		\end{equation*}%
		Then, for $n\geq 1$, 
		\begin{equation*}
			\gamma _{n}=\int_{0}^{T}e^{-\lambda _{n}(T-s)}\,ds=\int_{0}^{T}e^{-\lambda
				_{n}u}\,du=\frac{1-e^{-\lambda _{n}T}}{\lambda _{n}}\geq \frac{1-e^{-\lambda
					_{1}T}}{\lambda _{n}+1},
		\end{equation*}%
		Since $\lambda _{n}>0$, we have $\gamma _{n}>0$ for every $n\geq 1$, so the
		non-vanishing condition \eqref{non-vanishing} holds.
	\end{remark}

	\subsection{Uniqueness Result}
	
	We now state the main uniqueness result for the inverse problem~%
	\eqref{problem-inverse}. The result follows from the injectivity of the
	observation of the auxiliary variable and the \emph{$\Omega$-filling}
	property of the flow generated by $\boldsymbol{F}$.
	
	\begin{theorem}
		\label{main-theorem}  Assume that $\boldsymbol{F}$ satisfies~%
		\eqref{F-assumption}. Suppose, in  addition, that:
		
		\begin{enumerate}
			\item[$\mathrm{(i)}$] $\alpha\in L^{2}(0,T)$ is such that the operator  
			\begin{eqnarray}
				\mathcal{O}&:&\mathbb{H}\longrightarrow\mathbb{H}_{\mathrm{obs}},  \label{O}
				\\
				\mathcal{O}(\boldsymbol{z})&=&\mathcal{L}(\boldsymbol{y}(\boldsymbol{z})), 
				\notag
			\end{eqnarray}
			is injective, where $\boldsymbol{y}(\boldsymbol{z})$ denotes the solution 
			of~\eqref{eq:abstract-inverseh} associated with $\boldsymbol{z}$ and 
			initial datum $\boldsymbol{0}$, and $\mathcal{L}$ is the observation 
			operator defined in~\eqref{F};
			
			\item[$\mathrm{(ii)}$] The background vector field $\boldsymbol{F}$ 
			satisfies~\eqref{F-assumption} and the flow $\boldsymbol{X}$ is  \emph{$%
				\Omega$-filling and one-to-one} in the sense of  Definition~\ref%
			{Definition-Omega-Filing}. 
		\end{enumerate}
		
		Then the observation operator $\Lambda_{\mathcal{L}}$ defined  in~%
		\eqref{L_gen} is injective. More precisely, if  
		\begin{equation}
			\mathcal{L}(\boldsymbol{y}_1)=\mathcal{L}(\boldsymbol{y}_2) \qquad\text{and}%
			\qquad \gamma_{-}\boldsymbol{u}_1=\gamma_{-}\boldsymbol{u}_2 \quad\text{on }%
			\Gamma_{\mathrm{in}},  \label{eq:uniqueness-data}
		\end{equation}
		then  
		\begin{equation}
			\boldsymbol{u}_1=\boldsymbol{u}_2.  \label{eq:uniqueness-conclusion}
		\end{equation}
	\end{theorem}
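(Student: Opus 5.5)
The argument has two stages, matching the decoupled parabolic--hyperbolic structure. For $i=1,2$ set $\boldsymbol{z}_i:=\nabla\times(\boldsymbol{u}_i\times\boldsymbol{F})\in\mathbb{H}$; this lies in $\mathbb{H}$ because $\boldsymbol{u}_i\in\mathbb{X}_{\boldsymbol{F}}$. The state $\boldsymbol{b}_i$ generated by $\boldsymbol{v}_i=\alpha\boldsymbol{u}_i$ through \eqref{b-system} then solves \eqref{eq:abstract-inverseh} with source $\alpha\boldsymbol{z}_i+\Delta\boldsymbol{F}$ and initial datum $\boldsymbol{b}_0$. By \eqref{eq:b-y-difference}, the difference $\boldsymbol{y}:=\boldsymbol{y}_1-\boldsymbol{y}_2$ solves \eqref{eq:abstract-inverseh-diff} with source $\alpha(\boldsymbol{z}_1-\boldsymbol{z}_2)$ and zero initial datum. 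The background term $\Delta\boldsymbol{F}$ and the common initial datum cancel in the difference, and uniqueness in Theorem~\ref{theorem:existence} identifies $\boldsymbol{y}$ with $\boldsymbol{y}(\boldsymbol{z}_1-\boldsymbol{z}_2)$, the zero-initial-datum solution appearing in the definition \eqref{O} of $\mathcal{O}$.

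In the first (parabolic) stage, linearity of $\mathcal{L}$ and the hypothesis $\mathcal{L}(\boldsymbol{y}_1)=\mathcal{L}(\boldsymbol{y}_2)$ give
\[
\mathcal{O}(\boldsymbol{z}_1-\boldsymbol{z}_2)=\mathcal{L}(\boldsymbol{y}_1-\boldsymbol{y}_2)=0 .
\]
Injectivity of $\mathcal{O}$ from assumption (i) then yields $\boldsymbol{z}_1=\boldsymbol{z}_2$. One point needs care here: the operator $\mathcal{O}$ in (i) is defined through solutions with zero initial datum and no $\Delta\boldsymbol{F}$ term. It should therefore be stated explicitly that $\mathcal{O}$ is applied to the difference problem \eqref{eq:abstract-inverseh-diff}, which has exactly that form.

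In the second (hyperbolic) stage, set $\boldsymbol{w}:=\boldsymbol{u}_1-\boldsymbol{u}_2\in\mathbb{X}_{\boldsymbol{F}}$. It satisfies the homogeneous curl--div system \eqref{eq:div-curl}:
\[
\nabla\times(\boldsymbol{w}\times\boldsymbol{F})=\boldsymbol{0},\qquad \nabla\cdot\boldsymbol{w}=0 \ \text{in }\Omega,\qquad \gamma_-\boldsymbol{w}=\boldsymbol{0}\ \text{on }\Gamma_{\mathrm{in}}.
\]
The data $(\boldsymbol{g},\boldsymbol{z})=(\boldsymbol{0},\boldsymbol{0})$ trivially satisfy the compatibility condition $\mathcal{C}(\boldsymbol{0})=0$. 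The uniqueness part of Proposition~\ref{Proposition2}, which applies by assumption (ii) ($\Omega$-filling and one-to-one flow), gives $\boldsymbol{w}=\boldsymbol{0}$. Alternatively, the stability estimate \eqref{eq:estimate} with zero right-hand side gives $\|\boldsymbol{w}\|_{\mathbb{X}_{\boldsymbol{F}}}=0$ directly.

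I expect the main obstacle to be whether Proposition~\ref{Proposition2} really covers weak solutions in $\mathbb{X}_{\boldsymbol{F}}$, as opposed to only the solution it constructs. To guard against this, I would also give a direct argument along characteristics. As in the proof of Proposition~\ref{proposition-trace}, $\boldsymbol{w}$ satisfies $(\boldsymbol{F}\cdot\nabla)\boldsymbol{w}=(\boldsymbol{w}\cdot\nabla)\boldsymbol{F}$ in distributions. Setting $\boldsymbol{W}(t):=\boldsymbol{w}(\boldsymbol{X}(t,\boldsymbol{x}_0))$, for a.e.\ $\boldsymbol{x}_0\in\Gamma_{\mathrm{in}}$ this becomes the linear ODE
\[
\boldsymbol{W}'(t)=D\boldsymbol{F}(\boldsymbol{X}(t,\boldsymbol{x}_0))\,\boldsymbol{W}(t),\qquad \boldsymbol{W}\in H^1(0,\tau(\boldsymbol{x}_0)),\qquad \boldsymbol{W}(0)=\boldsymbol{0}.
\]
Gronwall's inequality gives $\boldsymbol{W}\equiv\boldsymbol{0}$ on $(0,\tau(\boldsymbol{x}_0))$. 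By the $\Omega$-filling property and the injectivity of Lemma~\ref{lem:flow-injective}, these characteristics cover $\Omega\setminus\mathcal{N}$, so $\boldsymbol{w}=\boldsymbol{0}$ a.e.\ in $\Omega$. Hence $\boldsymbol{u}_1=\boldsymbol{u}_2$, and $\Lambda_{\mathcal{L}}$ is injective.
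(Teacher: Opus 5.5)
Your proof is correct and is essentially the paper's argument. You apply the injectivity of $\mathcal{O}$ to the zero-initial-datum difference problem to get $\nabla\times((\boldsymbol{u}_1-\boldsymbol{u}_2)\times\boldsymbol{F})=\boldsymbol{0}$, and then invoke the uniqueness part of Proposition~\ref{Proposition2} with zero curl and inflow data. The paper instead starts from $\gamma_-\boldsymbol{v}_1=\gamma_-\boldsymbol{v}_2$ and divides out $\|\alpha\|_{L^2(0,T)}^2$, which is unnecessary given the trace hypothesis on $\boldsymbol{u}$ in the statement. Your backup Gronwall argument along characteristics is a real gain in rigor. Step~4 of the paper's proof of Proposition~\ref{Proposition2} applies an estimate established only for the solution constructed by density to an arbitrary solution. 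Your argument instead shows directly that any $\boldsymbol{w}\in\mathbb{X}_{\boldsymbol{F}}$ with $\nabla\times(\boldsymbol{w}\times\boldsymbol{F})=\boldsymbol{0}$ and $\gamma_-\boldsymbol{w}=\boldsymbol{0}$ vanishes.
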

	
	By Theorem~\ref{main-theorem}, the source $\boldsymbol{u}$ is uniquely
	determined by the state observations $\mathcal{L}(\boldsymbol{y})$ together
	with the Dirichlet trace $\gamma_{-}\boldsymbol{u}$ on the inflow boundary.
	Also, for the uniqueness result, the compatibility condition~$\mathrm{(ii)}$
	of Theorem~\ref{main-stability} is not required.
	
	\begin{remark}[Gauge fixing and hybrid parabolic--hyperbolic uniqueness]
		\label{rem:parabolic-hyperbolic-uniqueness}  The measurement $\Lambda_{%
			\mathcal{L}}$ defined in~\eqref{L_gen} combines  two mechanisms. The
		parabolic observation $\mathcal{L}$ uniquely determines  $\boldsymbol{z}$,
		where  $\alpha\boldsymbol{z}=\nabla\times(\boldsymbol{u}\times\boldsymbol{F}%
		)-\Delta\boldsymbol{F}$.  However, the operator  $\boldsymbol{u}%
		\mapsto\nabla\times(\boldsymbol{u}\times\boldsymbol{F})$  has a nontrivial
		kernel: any component of $\boldsymbol{v}$ collinear with  $\boldsymbol{F}$
		is annihilated and remains invisible to the state  observations, as shown in~%
		\eqref{parallel}. The hyperbolic inflow trace  $\gamma_{-}(\boldsymbol{u})$
		propagates along the  characteristics of $\boldsymbol{F}$ and fixes
		precisely this missing  component, thereby removing the gauge freedom. Hence
		neither measurement  alone suffices, while together they yield uniqueness in
		Theorem~\ref{main-theorem}.
	\end{remark}
	
	\begin{remark}[Applications to partial observations]
		The abstract observation operator $\mathcal{L}$ allows  Theorem~\ref%
		{main-theorem} to be applied to several measurement  configurations for the
		auxiliary parabolic  problems~\eqref{eq:abstract-inverseh}. For example, one
		may consider the observations~\eqref{internal-F}--\eqref{Final-F}. Such
		observation  configurations are studied in inverse source problems for
		parabolic  equations; see, e.g.,  \cite%
		{yamamoto2009carleman,lin2022theoretical,imanuvilov2026stability}.  Whenever
		the corresponding observation determines the spatial source  difference $%
		\boldsymbol{z}_1-\boldsymbol{z}_2$ uniquely (or satisfies the  stability
		estimate assumed in Theorem~\ref{main-stability}), the theorem  applies
		directly.
	\end{remark}
	
	\subsection*{Application of the uniqueness result}
	
	For a bounded connected domain $\Omega$ with sufficiently regular boundary, $%
	-\Delta_{\mathrm{H}}$ is a nonnegative self-adjoint operator on $\mathbb{H}$
	with compact resolvent \cite{ferraresso2026note}. Consequently, its spectrum
	is discrete. Denote its distinct eigenvalues by 
	\begin{equation*}
		0<\lambda_1\leq\lambda_2\leq\cdots, \qquad \lambda_n\longrightarrow+\infty,
	\end{equation*}
	and let 
	\begin{equation*}
		E_{\lambda_n}=\ker(	-\Delta_{\mathrm{H}}-\lambda_n I)
	\end{equation*}
	be the corresponding finite-dimensional eigenspaces. For each $n\geq1$, let 
	\begin{equation*}
		\{\boldsymbol{\phi}_{n,k}:1\leq k\leq m_n\}
	\end{equation*}
	be an orthonormal basis of $E_{\lambda_n}$, where $m_n=\dim E_{\lambda_n}$.
	The eigenfunctions satisfy the absolute boundary conditions 
	\begin{equation*}
		\boldsymbol{\phi}_{n,k}\cdot\boldsymbol{n}=0, \qquad (\nabla\times 
		\boldsymbol{\phi}_{n,k})\times\boldsymbol{n}=0 \quad\text{on }
		\partial\Omega, \qquad \nabla\cdot\boldsymbol{\phi}_{n,k}=0.
	\end{equation*}
	
	For each $\boldsymbol{z}\in \mathbb{H}$. Let $\boldsymbol{y}(\boldsymbol{z})$
	be the solution to system \eqref{eq:abstract-inverseh} and let $\mathcal{O}$
	be the operator defined in \ref{O}. We consider the following two
	observation configurations defined in \ref{internal-F} and \ref{Final-F}.
	
	\begin{theorem}
		\label{thm:injectivity-beta}
		Let \(\Omega\subset\mathbb{R}^3\) be bounded and connected, and let
		\(\alpha\in L^\infty(0,T)\).
		
		\begin{enumerate}
			\item[(i)] Assume that \(\omega_{\mathrm{obs}}\) is a non-empty open set,
			that \(\alpha\not\equiv 0\), and that
			\begin{equation*}
				\operatorname{supp}\alpha \subset [0,T_0]
				\qquad \text{for some }T_0<T.
			\end{equation*}
			Then \(\mathcal{O}\) defined in \ref{O} is injective for the observation
			\(\mathcal{L}\) defined in \ref{internal-F}.
			
			\item[(ii)] Suppose that
			\begin{equation}
				\gamma_i:=\int_0^T e^{-\lambda_i(T-s)}\alpha(s)\,ds\neq0,
				\qquad i\geq 1.
				\label{non-vanishing}
			\end{equation}
			Then \(\mathcal{O}\) defined in \ref{O} is injective for the terminal
			observation \(\mathcal{L}\) defined in \ref{Final-F}:
			\begin{equation*}
				\mathcal{O}(\boldsymbol{y})=\boldsymbol{y}(T)
				\quad \text{in }\mathbb{H}.
			\end{equation*}
		\end{enumerate}
	\end{theorem}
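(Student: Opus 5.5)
The plan is to use the spectral decomposition of $-\Delta_{\mathrm{H}}$ on $\mathbb{H}$ together with Duhamel's formula. By linearity it suffices to show that $\mathcal{L}(\boldsymbol{y})=0$ forces $\boldsymbol{z}=0$, where $\boldsymbol{y}$ solves the difference system \eqref{eq:abstract-inverseh-diff} with source $\alpha(t)\boldsymbol{z}$ and zero initial datum. Expanding $\boldsymbol{z}=\sum_{n,k} z_{n,k}\boldsymbol{\phi}_{n,k}$ in the eigenbasis and using Theorem~\ref{theorem:existence}, the strong solution is
\begin{equation*}
\boldsymbol{y}(t)=\sum_{n=1}^{\infty}\Bigl(\int_0^t e^{-\lambda_n(t-s)}\alpha(s)\,ds\Bigr)\sum_{k=1}^{m_n} z_{n,k}\boldsymbol{\phi}_{n,k}.
\end{equation*}
The series converges in $\mathbb{H}$ for each $t$, since the coefficients are bounded by $\|\alpha\|_{L^1}$.

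\textbf{Part (ii).} Evaluating the formula at $t=T$ gives $\boldsymbol{y}(T)=\sum_n \gamma_n \sum_k z_{n,k}\boldsymbol{\phi}_{n,k}$. If $\boldsymbol{y}(T)=0$, orthonormality yields $\gamma_n z_{n,k}=0$ for all $(n,k)$. The nonvanishing condition \eqref{non-vanishing} then gives $z_{n,k}=0$, so $\boldsymbol{z}=0$. This part is immediate.

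\textbf{Part (i).} We proceed in three steps.

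\emph{Step 1: reduce to a homogeneous evolution after $T_0$.} Since $\operatorname{supp}\alpha\subset[0,T_0]$, for $t\in(T_0,T)$ we have $\boldsymbol{y}(t)=e^{(t-T_0)\Delta_{\mathrm{H}}}\boldsymbol{y}(T_0)$. Hence
\begin{equation*}
\boldsymbol{y}(t)=\sum_n e^{-\lambda_n(t-T_0)}\gamma_n^{0}\,P_n\boldsymbol{z},\qquad \gamma_n^{0}=\int_0^{T_0}e^{-\lambda_n(T_0-s)}\alpha(s)\,ds,
\end{equation*}
where $P_n$ is the orthogonal projection onto $E_{\lambda_n}$. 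This function extends holomorphically in $t$ to $\operatorname{Re} t>T_0$, with values in $\mathbb{H}$, hence also after restriction to $\omega_{\mathrm{obs}}$.

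\emph{Step 2: separate the eigenmodes.} The observation vanishes on $(T_0,T)\times\omega_{\mathrm{obs}}$. By analyticity it vanishes for all $t>T_0$. A standard Dirichlet-series argument separates the modes: multiply by $e^{\lambda_1(t-T_0)}$ and let $t\to\infty$, which kills all higher modes; then iterate over the distinct $\lambda_n$. This gives $\gamma_n^{0}\,(P_n\boldsymbol{z})|_{\omega_{\mathrm{obs}}}=0$ for each $n$. The element $\gamma_n^0P_n\boldsymbol{z}$ lies in $E_{\lambda_n}$ and vanishes on the open set $\omega_{\mathrm{obs}}$. It is divergence free and satisfies $-\Delta\boldsymbol{w}=\lambda_n\boldsymbol{w}$ componentwise, so each component is real-analytic in the connected domain $\Omega$. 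Unique continuation therefore gives $\gamma_n^0P_n\boldsymbol{z}=0$ in $\Omega$.

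\emph{Step 3: the obstacle, namely $\gamma_n^0\neq0$.} This is the step I expect to be the main difficulty, because $\gamma_n^0$ can vanish for specific $n$ even though $\alpha\not\equiv0$. My first approach is to use the information on the whole window $(0,T)$ rather than only after $T_0$. Assume the components of $\boldsymbol{z}$ are nonzero, and reduce to the time-convolution identity
\begin{equation*}
\int_0^t\alpha(s)\,\bigl(e^{(t-s)\Delta_{\mathrm{H}}}\boldsymbol{z}\bigr)|_{\omega_{\mathrm{obs}}}\,ds=0,\qquad t\in(0,T).
\end{equation*}
Then apply Titchmarsh's convolution theorem on $(0,T)$. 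Since $\alpha\not\equiv0$, we may take $t_*=\inf\operatorname{supp}\alpha<T_0<T$. Titchmarsh then forces $(e^{\tau\Delta_{\mathrm{H}}}\boldsymbol{z})|_{\omega_{\mathrm{obs}}}=0$ for $\tau\in(0,T-t_*)$, and this interval is nonempty precisely because $T_0<T$.

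Analyticity of the semigroup in $\tau$ then extends this vanishing to all $\tau>0$. The mode separation of Step 2 gives $(P_n\boldsymbol{z})|_{\omega_{\mathrm{obs}}}=0$ for each $n$, now without any $\gamma_n^0$ factor. Unique continuation yields $P_n\boldsymbol{z}=0$ for all $n$, so $\boldsymbol{z}=0$.

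The delicate points are two. First, Titchmarsh must be justified for vector-valued ($L^2(\omega_{\mathrm{obs}})$-valued) functions; this can be done by pairing with test functions. Second, the unique continuation for $E_{\lambda_n}$ must be justified, and this follows from componentwise ellipticity.
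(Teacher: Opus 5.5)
Your part (ii) is exactly the paper's argument. For part (i), your final argument (Step 3) is correct but follows a different route from the paper. The paper uses the hypothesis $\operatorname{supp}\alpha\subset[0,T_0]$ essentially. After $T_0$ the evolution is free, so real analyticity of $t\mapsto\mathcal{L}\boldsymbol{w}(t)$ extends the vanishing of the observation from $(0,T)$ to all of $(0,\infty)$. Contractivity then makes $\boldsymbol{w}$ bounded, so a Laplace transform is available and gives $\widehat{\alpha}(p)\,\mathcal{L}(pI-\Delta_{\mathrm H})^{-1}\boldsymbol{z}=\boldsymbol{0}$ for $\operatorname{Re}p>0$. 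Because $\alpha$ has compact support, $\widehat{\alpha}$ is entire and not identically zero, so it can be divided out on an open set. The resolvent identity is then continued to $\mathbb{C}\setminus\{-\lambda_i\}$, and taking residues at $p=-\lambda_i$ yields $\mathcal{L}P_i\boldsymbol{z}=\boldsymbol{0}$, after which unique continuation concludes. This is how the paper sidesteps the possible vanishing of your $\gamma_n^0$: it never evaluates the solution at a single time, but uses the entire Laplace transform. You handle the same obstacle directly in the time variable with the Titchmarsh convolution theorem. You then use analyticity of the semigroup and a Dirichlet-series mode separation in place of the residue computation. The trade-off is as follows. The paper needs only the identity theorem and the pole structure of the resolvent, whereas you rely on a deeper (though classical) theorem. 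In exchange, your argument is more general. The interval $(0,T-t_*)$ is nonempty simply because $\alpha\not\equiv0$ on $(0,T)$ forces $t_*<T$; it is not ``precisely because $T_0<T$'' as you wrote. Also, the Titchmarsh interval is uniform in the test function $\varphi$, since $\alpha$ is fixed. Hence your proof gives injectivity for every nonzero $\alpha\in L^1(0,T)$ with no support restriction. Your Steps 1--2 are then superfluous, except as the place where you set up the mode separation and the unique continuation for elements of $E_{\lambda_n}$.
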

	
	\begin{proof}
		Suppose that
		\[
		\mathcal{O}(\boldsymbol{z}_1)=\mathcal{O}(\boldsymbol{z}_2).
		\]
		Set
		\[
		\boldsymbol{z}=\boldsymbol{z}_1-\boldsymbol{z}_2,
		\qquad
		\boldsymbol{w}
		=\boldsymbol{y}(\boldsymbol{z}_1)-\boldsymbol{y}(\boldsymbol{z}_2).
		\]
		Since the two solutions have the same initial condition, their difference
		satisfies
		\begin{equation}
			\label{eq:difference}
			\boldsymbol{w}'(t)-\Delta_{\mathrm H}\boldsymbol{w}(t)
			=\alpha(t)\boldsymbol{z},
			\qquad
			\boldsymbol{w}(0)=\boldsymbol{0}.
		\end{equation}
		
		Recall that \(-\Delta_{\mathrm H}\) is a nonnegative self-adjoint operator on
		\(\mathbb{H}\) with compact resolvent. Let
		\(\{\lambda_i\}_{i\geq 1}\) denote the distinct eigenvalues of
		\(-\Delta_{\mathrm H}\). For each \(i\geq 1\), let \(E_{\lambda_i}\) be the
		corresponding eigenspace, of dimension \(m_i\), and let
		\(\{\boldsymbol{\phi}_{i,k}\}_{k=1}^{m_i}\) be an orthonormal basis of
		\(E_{\lambda_i}\). Write
		\[
		\boldsymbol{z}
		=\sum_{i\geq 1}\sum_{k=1}^{m_i}
		z_{i,k}\boldsymbol{\phi}_{i,k},
		\qquad
		z_{i,k}
		=(\boldsymbol{z},\boldsymbol{\phi}_{i,k})_{\mathbb{H}}.
		\]
		By Duhamel's formula,
		\begin{equation}
			\label{eq:w-eigen-expansion}
			\boldsymbol{w}(t)
			=\sum_{i\geq 1}\sum_{k=1}^{m_i}
			z_{i,k}
			\left(\int_0^t e^{-\lambda_i(t-s)}
			\alpha(s)\,ds\right)\boldsymbol{\phi}_{i,k}.
		\end{equation}
		The series converges in \(\mathbb{H}\) for every \(t\in[0,T]\).
		
		\medskip
		\noindent
		\textbf{Case (i): observation over a time interval.}
		
		Assume that
		\[
		\mathcal{L}\boldsymbol{w}(t)=\boldsymbol{0}
		\quad\text{for almost every }t\in(0,T).
		\]
		We prove that \(\boldsymbol{z}=\boldsymbol{0}\).
		
		Since \(\alpha\) is supported in \([0,T_0]\), with \(T_0<T\), equation
		\eqref{eq:difference} is homogeneous for \(t>T_0\). Thus
		\[
		\boldsymbol{w}(t)
		=e^{(t-T_0)\Delta_{\mathrm H}}\boldsymbol{w}(T_0),
		\qquad t>T_0.
		\]
		Since \(-\Delta_{\mathrm H}\) is nonnegative and self-adjoint,
		\(\Delta_{\mathrm H}\) generates the analytic contraction semigroup
		\(e^{t\Delta_{\mathrm H}}\). Consequently,
		\[
		t\longmapsto\mathcal{L}\boldsymbol{w}(t)
		\]
		is real analytic on \((T_0,\infty)\).
		
		The observation vanishes almost everywhere on \((T_0,T)\). By continuity, it
		vanishes everywhere on this interval. The identity theorem for real-analytic
		functions therefore gives
		\[
		\mathcal{L}\boldsymbol{w}(t)=\boldsymbol{0},
		\qquad t>T_0.
		\]
		Together with the original observation equality, this yields
		\[
		\mathcal{L}\boldsymbol{w}(t)=\boldsymbol{0}
		\quad\text{for almost every }t>0.
		\]
		
		The solution \(\boldsymbol{w}\) is bounded on \([0,\infty)\). Indeed, it is
		continuous on \([0,T_0]\), and for \(t>T_0\),
		\[
		\|\boldsymbol{w}(t)\|_{\mathbb{H}}
		\leq \|\boldsymbol{w}(T_0)\|_{\mathbb{H}},
		\]
		by contractivity of the semigroup. Hence, for \(\operatorname{Re}p>0\), the
		Laplace transform
		\[
		\widehat{\boldsymbol{w}}(p)
		=\int_0^\infty e^{-pt}\boldsymbol{w}(t)\,dt
		\]
		exists in \(\mathbb{H}\). Also, since \(\alpha\) is supported in \([0,T_0]\),
		\[
		\widehat{\alpha}(p)
		=\int_0^{T_0}e^{-pt}\alpha(t)\,dt.
		\]
		Taking the Laplace transform of \eqref{eq:difference} gives
		\[
		(pI-\Delta_{\mathrm H})\widehat{\boldsymbol{w}}(p)
		=\widehat{\alpha}(p)\boldsymbol{z}.
		\]
		Therefore,
		\[
		\widehat{\boldsymbol{w}}(p)
		=\widehat{\alpha}(p)(pI-\Delta_{\mathrm H})^{-1}\boldsymbol{z},
		\qquad \operatorname{Re}p>0.
		\]
		
		Since \(\mathcal{L}\) is bounded and
		\(\mathcal{L}\boldsymbol{w}(t)=\boldsymbol{0}\) almost everywhere, we obtain
		\[
		\widehat{\alpha}(p)\,
		\mathcal{L}(pI-\Delta_{\mathrm H})^{-1}\boldsymbol{z}
		=\boldsymbol{0},
		\qquad \operatorname{Re}p>0.
		\]
		
		The function \(\widehat{\alpha}\) is entire and is not identically zero because
		\(\alpha\not\equiv0\). Its nonzero set therefore contains a nonempty open
		subset of the right half-plane. Consequently,
		\[
		\mathcal{L}(pI-\Delta_{\mathrm H})^{-1}\boldsymbol{z}
		=\boldsymbol{0}
		\]
		on that open subset.
		
		The function
		\[
		p\longmapsto
		\mathcal{L}(pI-\Delta_{\mathrm H})^{-1}\boldsymbol{z}
		\]
		is holomorphic on
		\[
		\mathbb{C}\setminus\{-\lambda_i:i\geq 1\}.
		\]
		This set is connected, so the identity theorem yields
		\[
		\mathcal{L}(pI-\Delta_{\mathrm H})^{-1}\boldsymbol{z}
		=\boldsymbol{0},
		\qquad
		p\in\mathbb{C}\setminus\{-\lambda_i:i\geq 1\}.
		\]
		
		Let \(P_i\) be the orthogonal projection onto \(E_{\lambda_i}\). Near
		\(p=-\lambda_i\), the resolvent \((pI-\Delta_{\mathrm H})^{-1}\) has the
		expansion
		\[
		(pI-\Delta_{\mathrm H})^{-1}
		=\frac{P_i}{p+\lambda_i}+\mathcal{R}_i(p),
		\]
		where \(\mathcal{R}_i\) is holomorphic near \(p=-\lambda_i\).
		Taking the residue at \(p=-\lambda_i\) gives
		\[
		\mathcal{L}P_i\boldsymbol{z}=\boldsymbol{0}.
		\]
		Equivalently,
		\[
		\mathcal{L}\left(
		\sum_{k=1}^{m_i}z_{i,k}\boldsymbol{\phi}_{i,k}
		\right)=\boldsymbol{0}.
		\]
		
		Since \(\omega_{\mathrm{obs}}\subset\Omega\) is a non-empty open set, the
		elliptic unique continuation property \cite{saut1982unicite} gives
		\[
		\sum_{k=1}^{m_i}z_{i,k}\boldsymbol{\phi}_{i,k}
		=\boldsymbol{0}
		\quad\text{in }\mathbb{H}.
		\]
		By orthonormality, \(z_{i,k}=0\) for every \(k\). Since \(i\) is arbitrary,
		all coefficients vanish and
		\[
		\boldsymbol{z}=\boldsymbol{0}.
		\]
		Therefore,
		\[
		\boldsymbol{z}_1=\boldsymbol{z}_2.
		\]
		
		\medskip
		\noindent
		\textbf{Case (ii): terminal observation.}
		
		Suppose instead that equality of the terminal observations gives
		\[
		\boldsymbol{w}(T)=\boldsymbol{0}
		\quad\text{in }\mathbb{H}.
		\]
		Evaluating \eqref{eq:w-eigen-expansion} at \(t=T\), we obtain
		\[
		\boldsymbol{0}
		=\sum_{i\geq 1}\sum_{k=1}^{m_i}
		\gamma_i z_{i,k}\boldsymbol{\phi}_{i,k},
		\qquad
		\gamma_i
		=\int_0^T e^{-\lambda_i(T-s)}\alpha(s)\,ds.
		\]
		Taking the inner product with each basis element yields
		\[
		\gamma_i z_{i,k}=0,
		\qquad i\geq 1,\quad k=1,\ldots,m_i.
		\]
		Assume the nonvanishing moment condition
		\[
		\gamma_i\neq0
		\qquad\text{for every }i\geq 1.
		\]
		Then \(z_{i,k}=0\) for every \(i,k\), so \(\boldsymbol{z}=\boldsymbol{0}\).
		Consequently,
		\[
		\boldsymbol{z}_1=\boldsymbol{z}_2.
		\]
		
		Thus the observation operator is injective in both cases.
	\end{proof}
	

	
	
	\section{Proof of main results}
	\label{sec:proof-main-results}
	
	We prove the stability and uniqueness results for the time-dependent
	velocity field. Since the background field $\boldsymbol F$ is independent
	of time, the stationary curl--div estimate of
	Proposition~\ref{Proposition2} can be applied for almost every
	$t\in(0,T)$, provided its compatibility and regularity assumptions hold.
	
	By linearity, it suffices to consider the difference of two solutions
	with the same initial magnetic data and the same background field
	$\boldsymbol F$. The known term $\Delta\boldsymbol F$ and the common
	initial contribution then cancel.
	
	For $i=1,2$, let
	\[
	\boldsymbol v_i(t,\boldsymbol x)
	=\alpha(t)\boldsymbol u_i(\boldsymbol x),
	\qquad
	\boldsymbol z_i
	=\nabla\times(\boldsymbol u_i\times\boldsymbol F).
	\]
	Set
	\[
	\boldsymbol u=\boldsymbol u_1-\boldsymbol u_2,
	\qquad
	\boldsymbol v=\boldsymbol v_1-\boldsymbol v_2
	=\alpha\boldsymbol u,
	\qquad
	\boldsymbol z=\boldsymbol z_1-\boldsymbol z_2
	=\nabla\times(\boldsymbol u\times\boldsymbol F).
	\]
	Consequently, the difference source is
	\[
	\boldsymbol h(t)=\alpha(t)\boldsymbol z,
	\qquad
	\boldsymbol g(t)=\gamma_-\boldsymbol v(t)
	=\alpha(t)\gamma_-\boldsymbol u.
	\]
	Throughout the proof, we assume
	$\alpha\in L^2(0,T)$ and
	$\|\alpha\|_{L^2(0,T)}>0$.
	
	\subsection{Proof of the stability result}
	
	\begin{proof}[Proof of Theorem~\protect\ref{main-stability}]
		By the regularity assumption in Theorem~\ref{main-stability},
		\[
		\gamma_-\boldsymbol u
		\in[H^1(\Gamma_{\mathrm{in}})]^3.
		\]
		Hence, for almost every $t\in(0,T)$,
		\[
		\boldsymbol g(t)
		=\alpha(t)\gamma_-\boldsymbol u
		\in[H^1(\Gamma_{\mathrm{in}})]^3.
		\]
		Moreover, the compatibility condition is preserved under multiplication
		by $\alpha(t)$. Indeed, by linearity of $\mathcal C$ and the assumption
		$\mathcal C(\gamma_-\boldsymbol u)=0$,
		\[
		\mathcal C(\boldsymbol g(t))
		=\alpha(t)\mathcal C(\gamma_-\boldsymbol u)=0
		\quad\text{for a.e. }t\in(0,T).
		\]
		
		Therefore, Proposition~\ref{Proposition2} applies to
		$\boldsymbol v(t)$ for almost every $t\in(0,T)$ and gives
		\begin{equation}
			\label{eq:pointwise-stability}
			\|\boldsymbol v(t)\|_{\mathbb X_{\boldsymbol F}}^2
			\leq C\left(
			\|\boldsymbol h(t)\|_{\mathbb H}^2
			+
			\|\boldsymbol g(t)\|_{[H^1(\Gamma_{\mathrm{in}})]^3}^2
			\right),
		\end{equation}
		where $C>0$ is independent of $t$.
		
		Integrating \eqref{eq:pointwise-stability} over $(0,T)$ and using
		$\boldsymbol v=\alpha\boldsymbol u$,
		$\boldsymbol h=\alpha\boldsymbol z$, and
		$\boldsymbol g=\alpha\gamma_-\boldsymbol u$, we obtain
		\[
		\begin{aligned}
			\|\alpha\|_{L^2(0,T)}^2
			\|\boldsymbol u\|_{\mathbb X_{\boldsymbol F}}^2
			\leq C\|\alpha\|_{L^2(0,T)}^2
			\bigl(
			&\|\boldsymbol z\|_{\mathbb H}^2+\|\gamma_-\boldsymbol u\|_
			{[H^1(\Gamma_{\mathrm{in}})]^3}^2
			\bigr).
		\end{aligned}
		\]
		Since $\|\alpha\|_{L^2(0,T)}>0$, division by
		$\|\alpha\|_{L^2(0,T)}^2$ yields
		\begin{equation}
			\label{eq:u-before-parabolic}
			\|\boldsymbol u\|_{\mathbb X_{\boldsymbol F}}^2
			\leq C\left(
			\|\boldsymbol z\|_{\mathbb H}^2
			+
			\|\gamma_-\boldsymbol u\|_
			{[H^1(\Gamma_{\mathrm{in}})]^3}^2
			\right).
		\end{equation}
		
		By assumption (i) of Theorem~\ref{main-stability}, the auxiliary
		inverse-source problem for the difference field satisfies
		\begin{equation}
			\label{eq:h-stability-time-dependent}
			\|\boldsymbol z\|_{\mathbb H}^2
			\leq
			\Theta\left(
			\|\mathcal L(\boldsymbol y_1-\boldsymbol y_2)\|_
			{\mathbb H_{\mathrm{obs}}}^2
			\right),
		\end{equation}
		where $\boldsymbol y_i$ is the solution of
		\eqref{eq:abstract-inverseh} associated with $\boldsymbol z_i$.
		
		Since the two magnetic perturbations have the same initial data and
		background field, their difference satisfies
		\[
		\boldsymbol b_1-\boldsymbol b_2
		=\boldsymbol y_1-\boldsymbol y_2.
		\]
		By linearity of $\mathcal L$,
		\begin{equation}
			\label{eq:obs-difference}
			\mathcal L(\boldsymbol y_1-\boldsymbol y_2)
			=\mathcal L(\boldsymbol b_1)-\mathcal L(\boldsymbol b_2).
		\end{equation}
		Combining \eqref{eq:u-before-parabolic},
		\eqref{eq:h-stability-time-dependent}, and
		\eqref{eq:obs-difference}, we conclude that
		\[
		\begin{aligned}
			\|\boldsymbol u_1-\boldsymbol u_2\|_
			{\mathbb X_{\boldsymbol F}}^2
			\leq C\Bigl[
			&\Theta\Bigl(
			\|\mathcal L(\boldsymbol b_1)
			-\mathcal L(\boldsymbol b_2)\|_
			{\mathbb H_{\mathrm{obs}}}^2
			\Bigr)+	\|\gamma_-(\boldsymbol u_1-\boldsymbol u_2)\|_
			{[H^1(\Gamma_{\mathrm{in}})]^3}^2
			\Bigr].
		\end{aligned}
		\]
		This is the asserted estimate \eqref{estimat-stab}.
	\end{proof}

	\subsection{Proof of the uniqueness result}
	
	\begin{proof}[Proof of Theorem~\protect\ref{main-theorem}]
		Suppose that
		\[
		\Lambda_{\mathcal L}(\boldsymbol v_1)
		=\Lambda_{\mathcal L}(\boldsymbol v_2).
		\]
		By the definition of $\Lambda_{\mathcal L}$, we have
		\[
		\mathcal L(\boldsymbol b_1)=\mathcal L(\boldsymbol b_2),
		\qquad
		\gamma_-\boldsymbol v_1=\gamma_-\boldsymbol v_2.
		\]
		Set
		\[
		\boldsymbol u=\boldsymbol u_1-\boldsymbol u_2,
		\qquad
		\boldsymbol z
		=\nabla\times(\boldsymbol u\times\boldsymbol F)
		=\boldsymbol z_1-\boldsymbol z_2.
		\]
		
		As in the stability proof, the common initial contribution and known
		terms cancel, and therefore
		\[
		\boldsymbol b_1-\boldsymbol b_2
		=\boldsymbol y_1-\boldsymbol y_2.
		\]
		It follows that
		\[
		\mathcal O(\boldsymbol z)
		=\mathcal L(\boldsymbol y_1-\boldsymbol y_2)
		=\mathcal L(\boldsymbol b_1)-\mathcal L(\boldsymbol b_2)
		=0.
		\]
		By assumption (i) of Theorem~\ref{main-theorem}, the operator
		$\mathcal O$ is injective. Consequently,
		\[
		\boldsymbol z=0,
		\qquad\text{that is,}\qquad
		\nabla\times(\boldsymbol u\times\boldsymbol F)=0
		\quad\text{in }\Omega.
		\]
		
		Furthermore, since
		$\boldsymbol v_i=\alpha\boldsymbol u_i$, the equality of the inflow
		traces gives
		\[
		\alpha(t)\gamma_-\boldsymbol u=0
		\quad\text{in }
		L^2\bigl(0,T;
		L^2(\Gamma_{\mathrm{in}};
		\tau|\boldsymbol F\cdot\boldsymbol n|\,d\sigma)\bigr).
		\]
		Thus
		\[
		\begin{aligned}
			0
			&=\|\alpha\gamma_-\boldsymbol u\|_
			{L^2(0,T;L^2(\Gamma_{\mathrm{in}};
				\tau|\boldsymbol F\cdot\boldsymbol n|\,d\sigma))}^2=\|\alpha\|_{L^2(0,T)}^2
			\|\gamma_-\boldsymbol u\|_
			{L^2(\Gamma_{\mathrm{in}};
				\tau|\boldsymbol F\cdot\boldsymbol n|\,d\sigma)}^2.
		\end{aligned}
		\]
		Since $\|\alpha\|_{L^2(0,T)}>0$, we conclude that
		\[
		\gamma_-\boldsymbol u=0
		\quad\text{in }
		L^2(\Gamma_{\mathrm{in}};
		\tau|\boldsymbol F\cdot\boldsymbol n|\,d\sigma).
		\]
		
		We have therefore obtained
		\[
		\nabla\times(\boldsymbol u\times\boldsymbol F)=0,
		\qquad
		\gamma_-\boldsymbol u=0.
		\]
		Applying Proposition~\ref{Proposition2} with zero curl data and zero
		inflow data gives
		\[
		\|\boldsymbol u\|_{\mathbb X_{\boldsymbol F}}=0.
		\]
		Hence $\boldsymbol u_1=\boldsymbol u_2$, and consequently $\Lambda_{\mathcal L}$ is injective, proving the uniqueness result.
	\end{proof}
	
	\begin{remark}[Weaker trace observation]
		\label{rem:weaker-trace}
		For the uniqueness result (Theorem \ref{main-theorem}), the observation operator can be weakened to
		\begin{equation}
			\begin{aligned}
				\Lambda_{\mathcal{L}}:\mathbb{X}_{\boldsymbol{F}}
				&\longrightarrow
				\mathbb{H}_{\mathrm{obs}}\times
				L^{2}\bigl(\Gamma_{\mathrm{in}};\tau|\boldsymbol{F}\cdot\boldsymbol{n}|\,d\sigma\bigr),\\
				\boldsymbol{u}
				&\longmapsto
				\bigl(\mathcal{L}(\boldsymbol{b}),\,
				\gamma_{-}\boldsymbol{u}\bigr),
			\end{aligned}
			\label{L_gen_weak}
		\end{equation}
		since \(\gamma_{-}\) is bounded into this weighted space by
		Proposition~\ref{proposition-trace}. The \(H^{1}(\Gamma_{\mathrm{in}})\)
		regularity was used only to define \(\mathcal{C}(\boldsymbol{g})\) in
		\(L^{2}(\Gamma_{\mathrm{in}})\), and it is not needed in the uniqueness
		proof: the trace data there are zero, and \(\mathcal{C}(\boldsymbol{0})=0\)
		trivially. Hence uniqueness holds for \eqref{L_gen_weak}. 
	\end{remark}
	
	
	\section{Numerical Simulations}
	\label{sec:numerical}
	
	We present three-dimensional numerical experiments comparing two
	reconstruction strategies for the spatial velocity field
	$\boldsymbol u$, under the separable ansatz
	\[
	\boldsymbol v(t,\boldsymbol x)=\alpha(t)\boldsymbol u(\boldsymbol x),
	\]
	where $\alpha\in L^2(0,T)$ is known. Throughout the numerical
	experiments, we take $\alpha(t)\equiv1$, so that
	$\boldsymbol v=\boldsymbol u$. The source term is therefore
	\[
	\boldsymbol h
	=\Delta\boldsymbol F+\boldsymbol z,
	\qquad
	\boldsymbol z=\nabla\times(\boldsymbol u\times\boldsymbol F).
	\]
	
	The first strategy is the two-stage approach developed above. It first
	reconstructs the auxiliary source $\boldsymbol z$ from a terminal
	observation of the auxiliary state $\boldsymbol y$, and then recovers
	$\boldsymbol u$ by solving the curl--div problem~\eqref{eq:div-curl}.
	The second strategy reconstructs $\boldsymbol u$ directly from the
	terminal magnetic observation by solving a regularized inverse problem
	subject to the divergence and inflow constraints
	\[
	\nabla\cdot\boldsymbol u=0\quad\text{in }\Omega,
	\qquad
	\gamma_-\boldsymbol u=\boldsymbol g_{\mathrm{obs}}
	\quad\text{on }\Gamma_{\mathrm{in}}.
	\]
	
	We consider the terminal observation operator
	\[
	\mathcal L(\boldsymbol b)=\boldsymbol b(T)\in\mathbb H.
	\]
	Here $\boldsymbol b(\boldsymbol u)$ denotes the solution of
	\eqref{b-system} generated by
	$\boldsymbol v=\alpha\boldsymbol u$. The objective is to reconstruct
	$\boldsymbol u$ from the terminal magnetic data and the prescribed
	inflow trace.
	
	\subsection{Two-stage reconstruction}
	\label{subsec:two-stage}
	
	In the first stage, we reconstruct the auxiliary source
	$\boldsymbol z$ from the terminal observation of the auxiliary state
	$\boldsymbol y$. Since $\alpha$ and $\Delta\boldsymbol F$ are known,
	the source in the magnetic equation is determined by
	$\Delta\boldsymbol F+\alpha\boldsymbol z$. For the auxiliary problem
	\eqref{eq:abstract-inverseh}, however, the unknown source is
	$\alpha\boldsymbol z$.
	
	Accordingly, $\boldsymbol y_{\mathrm{obs}}(T)$ denotes the terminal
	observation after accounting for the known contribution associated with
	$\Delta\boldsymbol F$ and the prescribed initial data. This distinction
	is necessary when the auxiliary state $\boldsymbol y$ is defined by
	subtracting those known contributions from the magnetic state
	$\boldsymbol b$.
	
	The reconstructed source must be compatible with the prescribed inflow
	trace in order for the second-stage curl--div problem to be solvable.
	
	For a tolerance parameter $\varepsilon\geq0$, define
	\[
	\mathbb G_{\mathrm{adm}}
	=
	\left\{
	\boldsymbol g\in[H^1(\Gamma_{\mathrm{in}})]^3:
	\|\mathcal C(\boldsymbol g)\|_{L^2(\Gamma_{\mathrm{in}})}
	\leq\varepsilon
	\right\},
	\]
	where $\mathcal C$ is the compatibility operator defined in
	\eqref{C(g,h)}, with its dependence on the data understood as specified
	in that definition.
	
	The exact compatibility condition is
	$\mathcal C(\boldsymbol g)=0$. Thus, if $\varepsilon>0$, membership
	in $\mathbb G_{\mathrm{adm}}$ expresses approximate compatibility only;
	it does not, by itself, guarantee exact solvability of the curl--div
	problem. If the measured boundary data fail the exact condition, one
	must either use compatible boundary data or specify a consistent
	approximation of the compatibility constraint.
	
	For the terminal observation, we reconstruct $\boldsymbol z$ by solving
	the Tikhonov-regularized minimization problem
	\begin{equation}
		\label{eq:two-stage-source-minimization}
		\boldsymbol z^\ast
		=
		\underset{\boldsymbol z\in\mathbb H}{\operatorname{argmin}}
		\left\{
		\frac12
		\|\boldsymbol y(\boldsymbol z)(T)
		-\boldsymbol y_{\mathrm{obs}}(T)\|_{\mathbb H}^2
		+
		\frac{\gamma}{2}\|\boldsymbol z\|_{\mathbb H}^2
		\right\},
	\end{equation}
	where $\gamma>0$ is the regularization parameter and
	$\boldsymbol y(\boldsymbol z)$ solves
	\eqref{eq:abstract-inverseh} with source $\alpha\boldsymbol z$.
	
	In the second stage, we solve the curl--div problem
	\eqref{eq:div-curl} with source $\boldsymbol z^\ast$ and the prescribed
	inflow datum $\boldsymbol g_{\mathrm{obs}}$. This yields the
	reconstructed field $\boldsymbol u^\ast$, provided the reconstructed
	source and boundary datum satisfy the required compatibility condition.
	
	The transport equation involved in the second stage is discretized
	using a SUPG finite-element method; see
	\cite{galeao1988consistent}. The complete procedure is summarized in
	Algorithm~\ref{alg:two-stage}.
	
	\subsection{Direct reconstruction}
	\label{subsec:direct}
	
	In the direct approach, the auxiliary source is eliminated using
	\[
	\boldsymbol z=\nabla\times(\boldsymbol u\times\boldsymbol F),
	\]
	and $\boldsymbol u$ is reconstructed in a single optimization step.
	We solve
	\begin{equation}
		\label{eq:direct-minimization}
		\boldsymbol u^\ast
		=
		\underset{\boldsymbol u\in\mathbb Z}{\operatorname{argmin}}
		\left\{
		\frac12
		\|\boldsymbol b(\boldsymbol u)(T)
		-\boldsymbol b_{\mathrm{obs}}(T)\|_{\mathbb H}^2
		+
		\frac{\gamma}{2}
		\|\boldsymbol u\|_{\mathbb X_{\boldsymbol F}}^2
		\right\},
	\end{equation}
	where $\boldsymbol b(\boldsymbol u)$ solves \eqref{b-system} with
	$\boldsymbol v=\alpha\boldsymbol u$, and
	$\boldsymbol b_{\mathrm{obs}}(T)$ is the measured terminal magnetic
	state. The admissible set is
	\[
	\mathbb Z
	=
	\left\{
	\boldsymbol u\in\mathbb X_{\boldsymbol F}:
	\nabla\cdot\boldsymbol u=0\text{ in }\Omega,\quad
	\gamma_-\boldsymbol u=\boldsymbol g_{\mathrm{obs}}
	\text{ on }\Gamma_{\mathrm{in}}
	\right\}.
	\]
	The minimization is understood over a nonempty admissible set.
	
	The two approaches differ in the variable optimized directly. The
	two-stage method first reconstructs $\boldsymbol z$ and then solves
	the curl--div problem for $\boldsymbol u$, whereas the direct method
	optimizes over $\boldsymbol u$ subject to the divergence and boundary
	constraints. Both methods ultimately produce a reconstruction of
	$\boldsymbol u$.
	
	\begin{algorithm}[H]
		\caption{Two-stage reconstruction of $\boldsymbol u$}
		\label{alg:two-stage}
		\begin{algorithmic}[1]
			\Require Initial state $\boldsymbol y_0$, corrected terminal observation
			$\boldsymbol y_{\mathrm{obs}}(T)$, background field $\boldsymbol F$,
			time profile $\alpha$, inflow datum $\boldsymbol g_{\mathrm{obs}}$,
			regularization parameter $\gamma>0$, tolerance $\varepsilon\geq0$.
			\Ensure Reconstructed velocity field $\boldsymbol u^\ast$.
			
			\State Compute
			$\|\mathcal C(\boldsymbol g_{\mathrm{obs}})\|_
			{L^2(\Gamma_{\mathrm{in}})}$.
			\If{$\|\mathcal C(\boldsymbol g_{\mathrm{obs}})\|_
				{L^2(\Gamma_{\mathrm{in}})}>\varepsilon$}
			\State Reject the boundary datum, or replace it by compatible data
			and use those data consistently in the subsequent steps.
			\EndIf
			
			\State Reconstruct $\boldsymbol z^\ast$ by solving
			\[
			\underset{\boldsymbol z\in\mathbb H}{\operatorname{argmin}}
			\left\{
			\frac12
			\|\boldsymbol y(\boldsymbol z)(T)
			-\boldsymbol y_{\mathrm{obs}}(T)\|_{\mathbb H}^2
			+
			\frac{\gamma}{2}\|\boldsymbol z\|_{\mathbb H}^2
			\right\},
			\]
			where $\boldsymbol y(\boldsymbol z)$ solves
			\eqref{eq:abstract-inverseh}.
			
			\State Check that $\boldsymbol z^\ast$ and the prescribed inflow data
			satisfy the compatibility condition required by
			\eqref{eq:div-curl}.
			
			\State Solve the curl--div problem~\eqref{eq:div-curl} with source
			$\boldsymbol z^\ast$ and inflow datum
			$\boldsymbol g_{\mathrm{obs}}$.
			
			\State \Return $\boldsymbol u^\ast$.
		\end{algorithmic}
	\end{algorithm}

	\subsection{Application to a manufactured solution}
	\label{subsec:manufactured}
	
	We test both reconstruction strategies on the three-dimensional unit
	ball
	\[
	\Omega
	=
	\left\{
	(x_1,x_2,x_3)\in\mathbb R^3:
	x_1^2+x_2^2+x_3^2<1
	\right\},
	\]
	whose outward unit normal is
	$\boldsymbol n=(x_1,x_2,x_3)^T$. We choose the constant background
	field
	\[
	\boldsymbol F=(1,0,0)^T.
	\]
	Thus $\boldsymbol F\in C^2(\overline\Omega)^3$,
	$\nabla\cdot\boldsymbol F=0$, and
	$\Delta\boldsymbol F=\boldsymbol 0$. We also take
	$\alpha\equiv1$. Consequently,
	\[
	\boldsymbol v=\boldsymbol u,\qquad
	\boldsymbol h=\boldsymbol z,\qquad
	\boldsymbol z=\nabla\times(\boldsymbol u\times\boldsymbol F).
	\]
	
	The flow generated by $\boldsymbol F$ is
	\[
	\boldsymbol X(t,\boldsymbol x_0)
	=\boldsymbol x_0+t(1,0,0).
	\]
	Since $\boldsymbol F\cdot\boldsymbol n=x_1$ on the unit sphere, the
	inflow boundary is
	\[
	\Gamma_{\mathrm{in}}
	=
	\left\{
	\boldsymbol x\in\partial\Omega:x_1<0
	\right\}.
	\]
	Every point in $\Omega$ lies on a unique characteristic segment
	entering through $\Gamma_{\mathrm{in}}$. The grazing set $x_1=0$ has
	zero surface measure and does not affect this almost-everywhere
	characteristic parametrization.
	
	We choose the exact velocity field
	\begin{equation}
		\label{u_true}
		\boldsymbol u_{\mathrm{true}}(x_1,x_2,x_3)
		=
		\begin{pmatrix}
			0\\
			x_1x_3\\
			-x_1x_2
		\end{pmatrix},
	\end{equation}
	and prescribe the inflow datum
	\[
	\boldsymbol g=\gamma_-\boldsymbol u_{\mathrm{true}}.
	\]
	
	We verify the properties of the manufactured source. First,
	\[
	\nabla\cdot\boldsymbol u_{\mathrm{true}}
	=
	\partial_2(x_1x_3)+\partial_3(-x_1x_2)=0.
	\]
	Moreover, on $\partial\Omega$,
	\[
	\boldsymbol u_{\mathrm{true}}\cdot\boldsymbol n
	=x_1x_2x_3-x_1x_2x_3=0.
	\]
	Thus the exact velocity is divergence-free and satisfies the
	no-penetration condition.
	
	Since $\boldsymbol F$ is constant and both fields are divergence-free,
	the vector identity for the curl of a cross product gives
	\[
	\boldsymbol z_{\mathrm{true}}
	=\nabla\times
	(\boldsymbol u_{\mathrm{true}}\times\boldsymbol F)
	=\partial_1\boldsymbol u_{\mathrm{true}}
	=
	\begin{pmatrix}
		0\\
		x_3\\
		-x_2
	\end{pmatrix}.
	\]
	It follows that
	\[
	\nabla\cdot\boldsymbol z_{\mathrm{true}}=0.
	\]
	Furthermore, on $\partial\Omega$,
	\[
	\boldsymbol z_{\mathrm{true}}\cdot\boldsymbol n
	=(0,x_3,-x_2)\cdot(x_1,x_2,x_3)
	=x_2x_3-x_2x_3=0.
	\]
	Hence $\boldsymbol z_{\mathrm{true}}\in\mathbb H$, provided
	$\mathbb H$ is the divergence-free space with zero normal trace used
	in the preceding analysis.
	
	The compatibility condition for the manufactured boundary datum
	holds because $\boldsymbol g$ is the inflow trace of the smooth field
	$\boldsymbol u_{\mathrm{true}}$, which satisfies the curl--div system
	with source $\boldsymbol z_{\mathrm{true}}$. Thus the manufactured
	data are compatible with \eqref{eq:div-curl}.

	\subsection{Implementation details}
	\label{subsec:forward-optimization}
	
	The computational mesh is generated using MATLAB's
	\texttt{multisphere(1)} geometry with mesh-size parameter $0.15$,
	yielding a tetrahedral mesh and continuous piecewise linear ($P_1$)
	vector finite elements. The implementation uses the polyhedral
	approximation of the unit ball.
	
	\paragraph{Weak formulation of the $\boldsymbol b$-system.}
	
	Let $\mathcal T_h$ denote the tetrahedral mesh. We use the finite-element
	space
	\[
	\mathbb V_h
	=
	\left\{
	\boldsymbol\varphi_h\in[P_1(\mathcal T_h)]^3:
	\boldsymbol\varphi_h\cdot\boldsymbol n=0
	\text{ on }\partial\Omega_h
	\right\},
	\]
	where the boundary condition is imposed with respect to the normal
	on the polyhedral boundary $\partial\Omega_h$.
	
	Using backward Euler with time step $\Delta t$, we seek
	$\boldsymbol b_h^n\in\mathbb V_h$ such that, for all
	$\boldsymbol\varphi_h\in\mathbb V_h$,
	\begin{equation}
		\label{eq:fe-b-weak}
		\left(
		\frac{\boldsymbol b_h^n-\boldsymbol b_h^{n-1}}{\Delta t},
		\boldsymbol\varphi_h
		\right)
		+
		\left(
		\nabla\times\boldsymbol b_h^n,
		\nabla\times\boldsymbol\varphi_h
		\right)
		=
		\left(
		\boldsymbol h_h^n,\boldsymbol\varphi_h
		\right).
	\end{equation}
	The discrete source is
	\begin{equation}
		\label{eq:h-b-source}
		\boldsymbol h_h^n
		=
		\Delta\boldsymbol F^n
		+
		\nabla\times(\boldsymbol v_h^n\times\boldsymbol F^n)
		=
		\Delta\boldsymbol F^n+\alpha^n\boldsymbol z_h^n,
	\end{equation}
	where
	\[
	\boldsymbol z_h^n
	=\nabla\times(\boldsymbol u_h^n\times\boldsymbol F^n).
	\]
	For the manufactured test,
	$\alpha\equiv1$ and $\Delta\boldsymbol F=\boldsymbol0$, so
	$\boldsymbol h_h^n=\boldsymbol z_h^n$.
	
	The curl--curl term gives the natural boundary condition
	$\boldsymbol n\times(\nabla\times\boldsymbol b)=\boldsymbol0$
	in the corresponding weak formulation, provided the solution has
	sufficient regularity for that boundary condition to be meaningful.
	The constraint
	$\boldsymbol b_h\cdot\boldsymbol n=0$ is imposed through the
	definition of $\mathbb V_h$.
	
	The curl--curl formulation alone does not enforce
	$\nabla\cdot\boldsymbol b_h=0$. If the implementation applies a
	discrete Helmholtz--Leray projection after each time step, that
	projection must be defined with respect to the discrete divergence
	operator and the imposed boundary conditions. The resulting projected
	scheme should be distinguished from the unprojected variational
	problem \eqref{eq:fe-b-weak}.
	
	\paragraph{Finite-element scheme.}
	
	Let $\boldsymbol M_v$ be the vector mass matrix, $\boldsymbol A_h$
	the curl--curl stiffness matrix, and $\boldsymbol H_h^n$ the load
	vector associated with $\boldsymbol h_h^n$. The backward Euler system
	corresponding to \eqref{eq:fe-b-weak} is
	\[
	(\boldsymbol M_v+\Delta t\,\boldsymbol A_h)\boldsymbol b^n
	=
	\boldsymbol M_v\boldsymbol b^{n-1}
	+\Delta t\,\boldsymbol H_h^n.
	\]
	If the code enforces the divergence constraint by projection, the
	computed field is then updated according to
	\[
	\boldsymbol b_h^n\leftarrow\mathbb P\boldsymbol b_h^n,
	\]
	where $\mathbb P$ denotes the discrete projection actually implemented.
	The definition of $\mathbb P$, including its treatment of the normal
	boundary condition, should be specified in the numerical method.
	A reference for projection methods is
	\cite{guermond1998approximation}; the precise projection used here
	must be consistent with the implementation.
	
	For the time discretization, we take
	\[
	T=1,\qquad N_t=100,\qquad
	\Delta t=0.01,\qquad \alpha(t)\equiv1.
	\]
	The regularization parameter is set to $\gamma=10^{-6}$.
	
	To assess sensitivity to measurement noise, we perturb the exact
	terminal observation according to
	\[
	\boldsymbol Y_{\mathrm{obs}}(T)
	=
	\boldsymbol Y_{\mathrm{obs}}^{\mathrm{exact}}(T)
	+\boldsymbol\eta,
	\]
	where the Gaussian noise realization is normalized so that
	\[
	\frac{\|\boldsymbol\eta\|_2}
	{\|\boldsymbol Y_{\mathrm{obs}}^{\mathrm{exact}}(T)\|_2}
	=\delta,
	\qquad
	\delta\in\{0,0.05,0.10,0.20\}.
	\]
	The same realization of the noisy terminal observation is supplied to
	both reconstruction methods at each noise level. The norm used in this
	normalization is the discrete Euclidean norm of the observation vector;
	it should not be confused with the $\mathbb H$-norm unless the
	discretization makes these norms equivalent under the adopted scaling.
	
	We measure the reconstruction error using the relative
	$\mathbb X_{\boldsymbol F}$-norm:
	\[
	E_u
	=
	\frac{
		\|\boldsymbol u_{\mathrm{rec}}-\boldsymbol u_{\mathrm{true}}\|_
		{\mathbb X_{\boldsymbol F}}
	}{
		\|\boldsymbol u_{\mathrm{true}}\|_{\mathbb X_{\boldsymbol F}}
	}.
	\]
	The reported values are specific to the mesh, time step, regularization
	parameter, noise realizations, and discrete norms used in the
	implementation. Although the same numerical value of $\gamma$ is used
	in both methods, the regularization terms involve different norms and
	therefore need not impose comparable regularization strengths.

	\subsection{Results and Discussion}
	\label{subsec:results}
	
	We compare the reconstructed velocity fields with the exact field
	$\boldsymbol u_{\mathrm{true}}$ in \eqref{u_true}. Table
	\ref{tab:error-comparison} reports the relative errors for the two
	methods at the four noise levels.
	
	\begin{table}[htbp]
		\caption{Relative reconstruction error $E_u$ for different terminal
			observation noise levels.}
		\label{tab:error-comparison}
		\centering
		\renewcommand{\arraystretch}{1.2}
		\begin{tabular}{lcc}
			\toprule
			\textbf{Noise level ($\delta$)}
			& \textbf{Two-stage method}
			& \textbf{Direct method}\\
			\midrule
			$0\%$  $(0.00)$ & $0.007$ & $0.015$\\
			$5\%$  $(0.05)$ & $0.011$ & $0.093$\\
			$10\%$ $(0.10)$ & $0.047$ & $0.245$\\
			$20\%$ $(0.20)$ & $0.078$ & $0.482$\\
			\bottomrule
		\end{tabular}
	\end{table}
	
	For the noiseless observation, both methods yield small reconstruction
	errors, with the two-stage method producing the smaller error in this
	test. As the noise level increases, the difference between the reported
	errors becomes more pronounced. At $20\%$ noise, the relative error is
	$0.078$ for the two-stage method and $0.482$ for the direct method.
	
	These results indicate that, for this manufactured example and the
	reported numerical settings, the two-stage method is less sensitive to
	the tested observation perturbations. In the two-stage approach, the
	auxiliary source is reconstructed through a regularized optimization
	problem before the curl--div system is solved. The second stage uses
	the transport structure of the stationary problem; in the
	implementation, the source is integrated along characteristics using
	a $12$-point Gauss--Legendre quadrature.
	
	The direct method instead reconstructs the velocity in a single
	optimization step, with the divergence and inflow constraints imposed
	simultaneously. The larger errors in the table suggest greater
	sensitivity to the tested noise realizations for this discretization
	and regularization choice. The observed errors alone, however, do not
	identify a unique cause for this difference. In particular, the two
	methods use different regularization norms, and their effective
	regularization strengths need not be equal even when the numerical
	parameter $\gamma$ is the same.

	\begin{figure}[h!]
		\centering
		\includegraphics[width=0.24\textwidth]{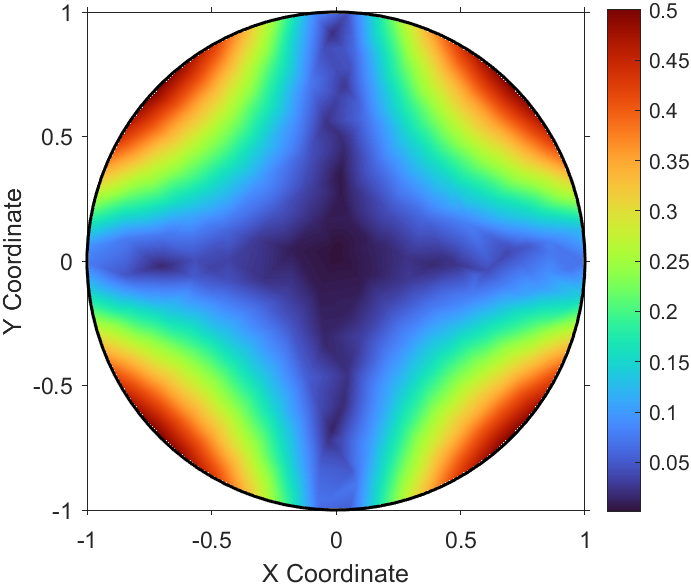}  
		\caption{Slice at $z=0$ of the exact field $\boldsymbol{u}_{\mathrm{true}}$
			defined in \eqref{u_true}.}
		\label{fig:utrue}
	\end{figure}
	
	\begin{figure}[h!]
		\centering
		\begin{subfigure}[b]{0.24\textwidth}
			\centering
			\includegraphics[width=\textwidth]{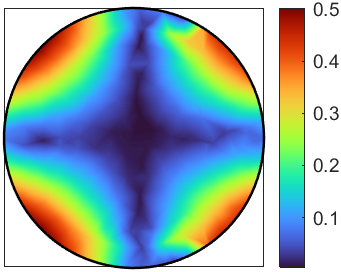}
			\caption{\(0\%\) noise}
			\label{fig:final-two-0}
		\end{subfigure}
		\hfill  
		\begin{subfigure}[b]{0.24\textwidth}
			\centering
			\includegraphics[width=\textwidth]{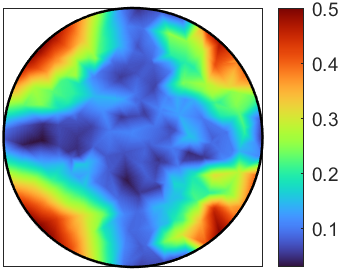}
			\caption{\(5\%\) noise}
			\label{fig:final-two-5}
		\end{subfigure}
		\hfill  
		\begin{subfigure}[b]{0.24\textwidth}
			\centering
			\includegraphics[width=\textwidth]{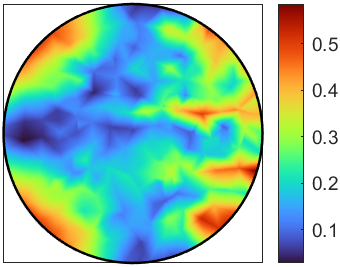}
			\caption{\(10\%\) noise}
			\label{fig:final-two-10}
		\end{subfigure}
		\hfill  
		\begin{subfigure}[b]{0.24\textwidth}
			\centering
			\includegraphics[width=\textwidth]{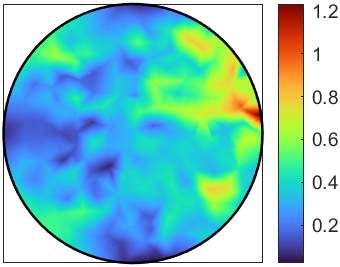}
			\caption{\(20\%\) noise}
			\label{fig:final-two-20}
		\end{subfigure}
		\caption{Slice at $z=0$ of the reconstructed field $\boldsymbol{u}$ obtained
			with the \textbf{two-stage method} from the terminal observation under
			varying noise levels. }
		\label{fig:final-two-stage}
	\end{figure}
	
	\begin{figure}[h!]
		\centering
		\begin{subfigure}[b]{0.24\textwidth}
			\centering
			\includegraphics[width=\textwidth]{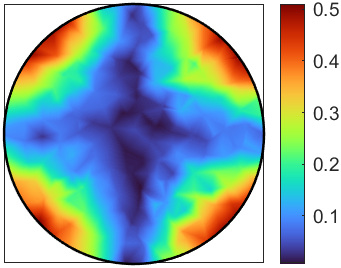}
			\caption{\(0\%\) noise}
			\label{fig:final-dir-0}
		\end{subfigure}
		\hfill  
		\begin{subfigure}[b]{0.24\textwidth}
			\centering
			\includegraphics[width=\textwidth]{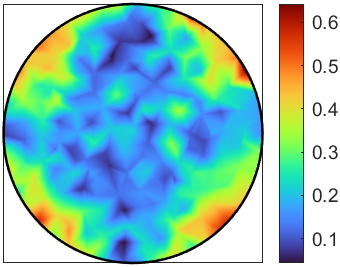}
			\caption{\(5\%\) noise}
			\label{fig:final-dir-5}
		\end{subfigure}
		\hfill  
		\begin{subfigure}[b]{0.24\textwidth}
			\centering
			\includegraphics[width=\textwidth]{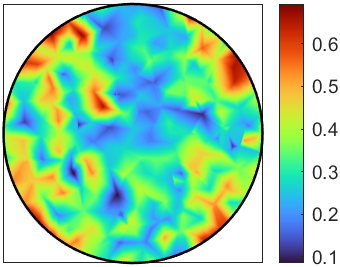}
			\caption{\(10\%\) noise}
			\label{fig:final-dir-10}
		\end{subfigure}
		\hfill  
		\begin{subfigure}[b]{0.24\textwidth}
			\centering
			\includegraphics[width=\textwidth]{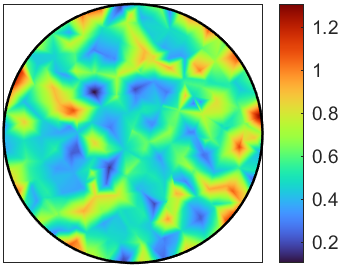}
			\caption{\(20\%\) noise}
			\label{fig:final-dir-20}
		\end{subfigure}
		\caption{Slice at $z=0$ of the reconstructed field $\boldsymbol{u}$ obtained
			with the \textbf{direct method} from the same observation under varying
			noise levels.}
		\label{fig:final-direct}
	\end{figure}
	
	
	
	
	\section{Appendix}
	
	\subsection*{Proof of Proposition~\protect\ref{Proposition2}}
	
	\label{AppendixB: proof curl-div}
	
	\begin{proof}
		We split the argument into four steps.
		
		\paragraph{Step 1: Reduction to a linear transport system.}
		By using \eqref{First-order} and since
		$[D\boldsymbol{F}\,\boldsymbol{u}]_{i}
		=\sum_{j=1}^{3}(\partial_{j}F_{i})u_{j}
		=(\boldsymbol{u}\cdot\nabla)F_{i}$, we have
		$(\boldsymbol{u}\cdot\nabla)\boldsymbol{F}=D\boldsymbol{F}\,\boldsymbol{u}$.
		Likewise $(\boldsymbol{F}\cdot\nabla)\boldsymbol{u}=D\boldsymbol{u}\,\boldsymbol{F}$. Hence
		\eqref{eq:div-curl} is equivalent to the first-order linear transport
		system with zero-order coupling
		\begin{equation}
			\label{eq:transport}
			\begin{cases}
				D\boldsymbol{u}\,\boldsymbol{F}-D\boldsymbol{F}\,\boldsymbol{u}=\boldsymbol{z}
				& \text{in }\Omega,\\[2pt]
				\boldsymbol{u}=\boldsymbol{g} & \text{on }\Gamma_{\mathrm{in}},
			\end{cases}
		\end{equation}
		together with the divergence constraint $\nabla\cdot\boldsymbol{u}=0$ in $\Omega$.
		
		\paragraph{Step 2: Existence and energy estimate.}
		Because $\boldsymbol{F}\in[C^{2}(\overline{\Omega})]^{3}$, the flow
		$\boldsymbol{X}$ is of class $C^{1}$. By the \emph{$\Omega$-filling} assumption according to Definition \ref{Definition-Omega-Filing} and the injectivity of the flow (by Lemma \ref{lem:flow-injective}), for
		a.e.\ $\boldsymbol{x}\in\Omega$ there exists a unique pair
		$(t,\boldsymbol{x}_{0})\in[0,T_{\mathrm{fill}}]\times\Gamma_{\mathrm{in}}$ such
		that $\boldsymbol{x}=\boldsymbol{X}(t,\boldsymbol{x}_{0})$. Let
		$\tau(\boldsymbol{x}_{0})\in(0,T_{\mathrm{fill}}]$ denote the exit time of the
		forward characteristic starting from $\boldsymbol{x}_{0}$. 
		
		For $\boldsymbol{x}_{0}\in\Gamma_{\mathrm{in}}$ let
		$\boldsymbol{R}(t,s;\boldsymbol{x}_{0})$ be the resolvent of the linear
		variational equation along the characteristic through $\boldsymbol{x}_{0}$,
		defined as the unique $C^{1}$ solution of
		\begin{equation}
			\label{eq:resolvent}
			\partial_t\boldsymbol{R}(t,s;\boldsymbol{x}_{0})
			=D\boldsymbol{F}(\boldsymbol{X}(t,\boldsymbol{x}_{0}))\,
			\boldsymbol{R}(t,s;\boldsymbol{x}_{0}),\qquad
			\boldsymbol{R}(s,s;\boldsymbol{x}_{0})=\boldsymbol{I},
		\end{equation}
		for $0\le s,t\le\tau(\boldsymbol{x}_{0})$. Since
		$\boldsymbol{F}\in[C^{2}(\overline{\Omega})]^{3}$, the coefficient
		$D\boldsymbol{F}\circ\boldsymbol{X}$ is continuous and bounded; therefore
		\eqref{eq:resolvent} has a unique $C^{1}$ solution satisfying the uniform bound
		\begin{equation}
			\label{eq:R-bounds}
			\|\boldsymbol{R}(t,s;\boldsymbol{x}_{0})\|
			\le e^{\|D\boldsymbol{F}\|_{\infty}|t-s|},\qquad
			0\le s,t\le\tau(\boldsymbol{x}_{0}),\;\boldsymbol{x}_{0}\in\Gamma_{\mathrm{in}}.
		\end{equation}
		
		Since the space
		$[C_c^{\infty}(\Omega)]^{3}\cap\{\nabla\cdot=0\}$ is dense in $\mathbb{H}$
		\cite[Theorem 1.4]{temam1977navier} and
		$[C^{\infty}(\Gamma_{\mathrm{in}})]^{3}$ is dense in
		$L^{2}(\Gamma_{\mathrm{in}};\tau|\boldsymbol{F}\cdot\boldsymbol{n}|\,\mathrm{d}\sigma)$,
		for smooth data
		$\boldsymbol{z}\in[C_c^{\infty}(\Omega)]^{3}\cap\{\nabla\cdot\boldsymbol{z}=0\}$
		and $\boldsymbol{g}\in[C^{\infty}(\Gamma_{\mathrm{in}})]^{3}$, we define
		$\boldsymbol{u}$ along characteristics by Duhamel's formula:
		\begin{equation}
			\label{eq:duhamel}
			\boldsymbol{u}(\boldsymbol{X}(t,\boldsymbol{x}_{0}))
			:=\boldsymbol{R}(t,0;\boldsymbol{x}_{0})\,\boldsymbol{g}(\boldsymbol{x}_{0})
			+\int_{0}^{t}\boldsymbol{R}(t,s;\boldsymbol{x}_{0})
			\boldsymbol{z}(\boldsymbol{X}(s,\boldsymbol{x}_{0}))\,\mathrm{d}s,
		\end{equation}
		for $\boldsymbol{x}_{0}\in\Gamma_{\mathrm{in}}$ and
		$0\le t\le\tau(\boldsymbol{x}_{0})$. Differentiating \eqref{eq:duhamel} with
		respect to $t$ and using \eqref{eq:resolvent}, we obtain
		\[
		\partial_t\boldsymbol{u}(\boldsymbol{X}(t,\boldsymbol{x}_{0}))
		=D\boldsymbol{F}(\boldsymbol{X}(t,\boldsymbol{x}_{0}))\,
		\boldsymbol{u}(\boldsymbol{X}(t,\boldsymbol{x}_{0}))
		+\boldsymbol{z}(\boldsymbol{X}(t,\boldsymbol{x}_{0})).
		\]
		Since
		$\partial_t\boldsymbol{u}(\boldsymbol{X}(t,\boldsymbol{x}_{0}))
		=D\boldsymbol{u}\,\boldsymbol{F}(\boldsymbol{X}(t,\boldsymbol{x}_{0}))$, this
		shows that $\boldsymbol{u}$ satisfies the transport equation in
		\eqref{eq:transport}. Moreover, setting $t=0$ in \eqref{eq:duhamel} gives
		$\boldsymbol{u}(\boldsymbol{x}_{0})=\boldsymbol{g}(\boldsymbol{x}_{0})$, so the
		boundary condition is satisfied.
		
		To establish the stability estimate, let $\boldsymbol{x}_{0}\in\Gamma_{\mathrm{in}}$
		and $t\in[0,\tau(\boldsymbol{x}_{0})]$. From \eqref{eq:duhamel} and
		\eqref{eq:R-bounds},
		\[
		|\boldsymbol{u}(\boldsymbol{X}(t,\boldsymbol{x}_{0}))|
		\le e^{\|D\boldsymbol{F}\|_{\infty}T_{\mathrm{fill}}}|\boldsymbol{g}(\boldsymbol{x}_{0})|
		+\int_{0}^{t}e^{\|D\boldsymbol{F}\|_{\infty}(t-s)}
		|\boldsymbol{z}(\boldsymbol{X}(s,\boldsymbol{x}_{0}))|\,\mathrm{d}s.
		\]
		Squaring, using $(a+b)^{2}\le 2a^{2}+2b^{2}$ and Cauchy--Schwarz,
		\[
		|\boldsymbol{u}(\boldsymbol{X}(t,\boldsymbol{x}_{0}))|^{2}
		\le 2e^{2\|D\boldsymbol{F}\|_{\infty}T_{\mathrm{fill}}}|\boldsymbol{g}(\boldsymbol{x}_{0})|^{2}
		+2T_{\mathrm{fill}}\,e^{2\|D\boldsymbol{F}\|_{\infty}T_{\mathrm{fill}}}
		\int_{0}^{t}|\boldsymbol{z}(\boldsymbol{X}(s,\boldsymbol{x}_{0}))|^{2}\,
		\mathrm{d}s.
		\]
		Integrating this inequality with respect to $t$ from $0$ to
		$\tau(\boldsymbol{x}_{0})$, and keeping the factor $\tau(\boldsymbol{x}_{0})$
		produced by the first term, we
		obtain
		\[
		\int_{0}^{\tau(\boldsymbol{x}_{0})}
		|\boldsymbol{u}(\boldsymbol{X}(t,\boldsymbol{x}_{0}))|^{2}\,\mathrm{d}t
		\le 2e^{2\|D\boldsymbol{F}\|_{\infty}T_{\mathrm{fill}}}\,
		\tau(\boldsymbol{x}_{0})\,|\boldsymbol{g}(\boldsymbol{x}_{0})|^{2}
		+2T_{\mathrm{fill}}^{2}e^{2\|D\boldsymbol{F}\|_{\infty}T_{\mathrm{fill}}}
		\int_{0}^{\tau(\boldsymbol{x}_{0})}
		|\boldsymbol{z}(\boldsymbol{X}(s,\boldsymbol{x}_{0}))|^{2}\,
		\mathrm{d}s.
		\]
		Multiplying by $|\boldsymbol{F}(\boldsymbol{x}_{0})\cdot\boldsymbol{n}(\boldsymbol{x}_{0})|$
		and integrating over $\Gamma_{\mathrm{in}}$, the change of variables formula
		\eqref{eq:change-of-variables} yields
		\begin{equation}
			\label{eq:L2-estimate}
			\|\boldsymbol{u}\|_{\mathbb{H}}^2
			\le C\bigl(\|\boldsymbol{z}\|_{\mathbb{H}}^2
			+\|\boldsymbol{g}\|_{L^{2}(\Gamma_{\mathrm{in}};\tau|\boldsymbol{F}\cdot\boldsymbol{n}|\,\mathrm{d}\sigma)}^2\bigr),
		\end{equation}
		with $C=C(T_{\mathrm{fill}},\|D\boldsymbol{F}\|_{\infty})>0$. From the
		transport equation \eqref{eq:transport} we have
		$D\boldsymbol{u}\,\boldsymbol{F}=D\boldsymbol{F}\,\boldsymbol{u}+\boldsymbol{z}$,
		whence
		\[
		\|D\boldsymbol{u}\,\boldsymbol{F}\|_{\mathbb{H}}
		\le\|D\boldsymbol{F}\|_{L^{\infty}}\|\boldsymbol{u}\|_{\mathbb{H}}
		+\|\boldsymbol{z}\|_{\mathbb{H}}.
		\]
		Squaring and combining with \eqref{eq:L2-estimate} gives the estimate
		\begin{equation}
			\label{eq:smooth-estimate}
			\|\boldsymbol{u}\|_{\mathbb{H}}^{2}+\|D\boldsymbol{u}\,\boldsymbol{F}\|_{\mathbb{H}}^{2}
			\le C\bigl(\|\boldsymbol{z}\|_{\mathbb{H}}^2
			+\|\boldsymbol{g}\|_{L^{2}(\Gamma_{\mathrm{in}};\tau|\boldsymbol{F}\cdot\boldsymbol{n}|\,\mathrm{d}\sigma)}^2\bigr),
		\end{equation}
		for smooth data.
		\medskip
		
		\noindent\textbf{Density argument: extension to general data.}
		The estimate \eqref{eq:smooth-estimate} was proved for smooth data
		$\boldsymbol{z}\in[C_c^{\infty}(\Omega)]^{3}\cap\{\nabla\cdot =0\}$, and
		$\boldsymbol{g}\in[C^{\infty}(\Gamma_{\mathrm{in}})]^{3}$. We now extend it to
		general
		\[
		\boldsymbol{z}\in\mathbb{H},
		\qquad
		\boldsymbol{g}\in L^{2}(\Gamma_{\mathrm{in}};\tau|\boldsymbol{F}\cdot\boldsymbol{n}|\,\mathrm{d}\sigma).
		\]
		Choose sequences
		$\boldsymbol{z}_{n}\in	\boldsymbol{z}\in[C_c^{\infty}(\Omega)]^{3}\cap\{\nabla\cdot =0\}$,
		$\boldsymbol{g}_{n}\in[C^{\infty}(\Gamma_{\mathrm{in}})]^{3}$ with
		$\boldsymbol{z}_{n}\to\boldsymbol{z}$ in $\mathbb{H}$,
		$\boldsymbol{g}_{n}\to\boldsymbol{g}$ in
		$L^{2}(\Gamma_{\mathrm{in}};\tau|\boldsymbol{F}\cdot\boldsymbol{n}|\,\mathrm{d}\sigma)$.
		For each $n$, let $\boldsymbol{u}_{n}$ be the smooth solution from
		\eqref{eq:duhamel}. Applying \eqref{eq:smooth-estimate} to
		$\boldsymbol{u}_{n}-\boldsymbol{u}_{m}$,
		\[
		\|\boldsymbol{u}_{n}-\boldsymbol{u}_{m}\|_{\mathbb{H}}^{2}
		+\|D(\boldsymbol{u}_{n}-\boldsymbol{u}_{m})\,\boldsymbol{F}\|_{\mathbb{H}}^{2}
		\le
		C\bigl(\|\boldsymbol{z}_{n}-\boldsymbol{z}_{m}\|_{\mathbb{H}}^{2}
		+\|\boldsymbol{g}_{n}-\boldsymbol{g}_{m}\|_{L^{2}(\Gamma_{\mathrm{in}};\tau|\boldsymbol{F}\cdot\boldsymbol{n}|\,\mathrm{d}\sigma)}^{2}\bigr).
		\]
		Hence $(\boldsymbol{u}_{n})$ and $(D\boldsymbol{u}_{n}\,\boldsymbol{F})$ are
		Cauchy in $[L^{2}(\Omega)]^{3}$; let $\boldsymbol{u}$ be the limit of
		$(\boldsymbol{u}_{n})$. Passing to the limit in the transport equation (tested
		against $\varphi\in C^{\infty}_{c}(\Omega)$) gives
		$D\boldsymbol{u}\,\boldsymbol{F}-D\boldsymbol{F}\,\boldsymbol{u}=\boldsymbol{z}$
		in $\mathbb{H}$. The boundary condition is preserved by the continuity of the
		trace operator $\gamma_{-}$ in the weighted space
		$L^{2}(\Gamma_{\mathrm{in}};\tau|\boldsymbol{F}\cdot\boldsymbol{n}|\,\mathrm{d}\sigma)$
		(Proposition~\ref{proposition-trace}): since the $\boldsymbol{u}_{n}$ are not
		divergence free, we use it in the form given by its proof, which only requires
		$\boldsymbol{u}\in[L^{2}(\Omega)]^{3}$ and
		$D\boldsymbol{u}\,\boldsymbol{F}\in[L^{2}(\Omega)]^{3}$:
		\begin{equation}
			\label{eq:trace-graph}
			\int_{\Gamma_{\mathrm{in}}}|\gamma_{-}\boldsymbol{u}|^{2}\,\tau\,
			|\boldsymbol{F}\cdot\boldsymbol{n}|\,\mathrm{d}\sigma
			\le 2\|\boldsymbol{u}\|_{\mathbb{H}}^{2}
			+T_{\mathrm{fill}}^{2}\|D\boldsymbol{u}\,\boldsymbol{F}\|_{\mathbb{H}}^{2}.
		\end{equation}
		Applying \eqref{eq:trace-graph} to $\boldsymbol{u}_{n}-\boldsymbol{u}$ and
		using $\gamma_{-}\boldsymbol{u}_{n}=\boldsymbol{g}_{n}\to\boldsymbol{g}$, we
		get $\gamma_{-}\boldsymbol{u}=\boldsymbol{g}$. Passing to the limit in
		\eqref{eq:smooth-estimate},
		\begin{equation}
			\label{eq:general-estimate}
			\|\boldsymbol{u}\|_{\mathbb{H}}^{2}
			+\|D\boldsymbol{u}\,\boldsymbol{F}\|_{\mathbb{H}}^{2}
			\le
			C\bigl(\|\boldsymbol{z}\|_{\mathbb{H}}^{2}
			+\|\boldsymbol{g}\|_{L^{2}(\Gamma_{\mathrm{in}};\tau|\boldsymbol{F}\cdot\boldsymbol{n}|\,\mathrm{d}\sigma)}^{2}\bigr).
		\end{equation}
		\paragraph{Step 3: Necessity and sufficiency of the compatibility condition.}
		\bigskip
		\textbf{Necessity.}	Assume $\boldsymbol{u}\in\mathbb{X}_{\boldsymbol{F}}$ solves
		\eqref{eq:div-curl}. By Step~1 it solves \eqref{eq:transport} with
		$\nabla\cdot\boldsymbol{u}=0$.
		
		Approximate $(\boldsymbol{z},\boldsymbol{g})$ by smooth data
		$(\boldsymbol{z}_{n},\boldsymbol{g}_{n})$ with
		$\boldsymbol{z}_{n}\in[C^{\infty}(\overline{\Omega})]^{3}\cap\mathbb{H}$,
		$\boldsymbol{g}_{n}\in[C^{\infty}(\Gamma_{\mathrm{in}})]^{3}$,
		$\boldsymbol{z}_{n}\to\boldsymbol{z}$ in $\mathbb{H}$,
		$\boldsymbol{g}_{n}\to\boldsymbol{g}$ in $[H^{1}(\Gamma_{\mathrm{in}})]^{3}$.
		Let $\boldsymbol{u}_{n}$ be the smooth solutions from \eqref{eq:duhamel}. By
		the density used above,
		$\|\boldsymbol{u}_{n}-\boldsymbol{u}\|_{\mathbb{H}}
		+\|D(\boldsymbol{u}_{n}-\boldsymbol{u})\,\boldsymbol{F}\|_{\mathbb{H}}\to 0$,
		and $\gamma_{-}\boldsymbol{u}_{n}=\boldsymbol{g}_{n}\to\boldsymbol{g}$ in
		$L^{2}(\Gamma_{\mathrm{in}};\tau|\boldsymbol{F}\cdot\boldsymbol{n}|\,\mathrm{d}\sigma)$.
		
		For smooth $\boldsymbol{u}_{n}$, decompose $\boldsymbol{F}$ into its tangential
		and normal components:
		\[
		\boldsymbol{F}_{\Gamma}=\boldsymbol{F}-(\boldsymbol{F}\cdot\boldsymbol{n})\boldsymbol{n}.
		\]
		On $\Gamma_{\mathrm{in}}$,
		$D\boldsymbol{u}_{n}\,\boldsymbol{F}
		=D_{\Gamma}\boldsymbol{u}_{n}\,\boldsymbol{F}_{\Gamma}
		+(\boldsymbol{F}\cdot\boldsymbol{n})\,\partial_{\boldsymbol{n}}\boldsymbol{u}_{n}$.
		Taking the scalar product with $\boldsymbol{n}$ and using
		$\boldsymbol{u}_{n}=\boldsymbol{g}_{n}$ on $\Gamma_{\mathrm{in}}$,
		\begin{equation}
			\label{eq:necessity-convective}
			D\boldsymbol{u}_{n}\,\boldsymbol{F}\cdot\boldsymbol{n}
			=\bigl(D_{\Gamma}\boldsymbol{g}_{n}\,\boldsymbol{F}_{\Gamma}\bigr)\cdot\boldsymbol{n}
			+(\boldsymbol{F}\cdot\boldsymbol{n})\bigl(\partial_{\boldsymbol{n}}\boldsymbol{u}_{n}\cdot\boldsymbol{n}\bigr).
		\end{equation}
		From the transport equation \eqref{eq:transport}, we have $D\boldsymbol{u}_{n}\,\boldsymbol{F}-D\boldsymbol{F}\,\boldsymbol{u}_{n}=\boldsymbol{z}_{n}$
		and $\boldsymbol{z}_{n}\cdot\boldsymbol{n}=0$ on $\partial\Omega$,
		\begin{equation}
			\label{eq:necessity-transport}
			D\boldsymbol{u}_{n}\,\boldsymbol{F}\cdot\boldsymbol{n}
			=\bigl(D\boldsymbol{F}\,\boldsymbol{g}_{n}\bigr)\cdot\boldsymbol{n}.
		\end{equation}
		From $\nabla\cdot\boldsymbol{u}_{n}=0$ on the boundary, using \cite[formula (5.19), p.~522]{delfour2011shapes} gives 
		\begin{equation}
			\label{eq:normal-derivative}
			\partial_{\boldsymbol{n}}\boldsymbol{u}_{n}\cdot\boldsymbol{n}
			=-\nabla_{\Gamma_{\mathrm{in}}}\cdot\boldsymbol{g}_{n}.
		\end{equation}
		Substituting \eqref{eq:normal-derivative} into \eqref{eq:necessity-convective}
		and comparing with \eqref{eq:necessity-transport},
		\[
		(\boldsymbol{F}\cdot\boldsymbol{n})\nabla_{\Gamma_{\mathrm{in}}}\cdot\boldsymbol{g}_{n}
		+\bigl(D\boldsymbol{F}\,\boldsymbol{g}_{n}-D_{\Gamma}\boldsymbol{g}_{n}\,\boldsymbol{F}_{\Gamma}\bigr)\cdot\boldsymbol{n}
		=0,
		\]
		i.e.\ $C(\boldsymbol{g}_{n})=0$. Since $C$ is a first-order tangential operator
		with bounded coefficients, $\boldsymbol{g}_{n}\to\boldsymbol{g}$ in
		$[H^{1}(\Gamma_{\mathrm{in}})]^{3}$ implies
		$C(\boldsymbol{g}_{n})\to C(\boldsymbol{g})$ in $L^{2}(\Gamma_{\mathrm{in}})$.
		Hence $C(\boldsymbol{g})=0$ a.e.\ on $\Gamma_{\mathrm{in}}$.
		
		\medskip\noindent
		\textbf{Sufficiency.}
		We assume $C(\boldsymbol{g})=0$ a.e.\ on $\Gamma_{\mathrm{in}}$. Let
		$\boldsymbol{u}$ be the solution of \eqref{eq:transport}. Set $q:=\nabla\cdot\boldsymbol{u}$.
		
		For smooth data, $q$ is a continuous function and the computation below is
		classical. Since $\boldsymbol{z}=D\boldsymbol{u}\,\boldsymbol{F}-D\boldsymbol{F}\,\boldsymbol{u}$
		and $\nabla\cdot\boldsymbol{z}=0$, taking the divergence gives
		$\nabla\cdot(D\boldsymbol{u}\,\boldsymbol{F})-\nabla\cdot(D\boldsymbol{F}\,\boldsymbol{u})=0$.
		Using the identities
		\[
		\nabla\cdot(D\boldsymbol{u}\,\boldsymbol{F})
		=(\boldsymbol{F}\cdot\nabla)q+\sum_{i,j}(\partial_{i}F_{j})(\partial_{j}u_{i}),
		\]
		\[
		\nabla\cdot(D\boldsymbol{F}\,\boldsymbol{u})
		=\boldsymbol{u}\cdot\nabla(\nabla\cdot\boldsymbol{F})
		+\sum_{i,j}(\partial_{j}F_{i})(\partial_{i}u_{j}),
		\]
		and $\nabla\cdot\boldsymbol{F}=0$, the two sums cancel after swapping
		$i\leftrightarrow j$, giving $(\boldsymbol{F}\cdot\nabla)q=0$ in
		$\mathcal{D}'(\Omega)$.
		
		On $\Gamma_{\mathrm{in}}$, the tangential divergence identity
		\cite[formula (5.19), p.~522]{delfour2011shapes} applied to $\boldsymbol{u}$
		gives
		\[
		q=\nabla_{\Gamma_{\mathrm{in}}}\cdot\boldsymbol{g}
		+\partial_{\boldsymbol{n}}\boldsymbol{u}\cdot\boldsymbol{n}.
		\]
		Combined with the boundary decomposition of the transport equation and
		$\boldsymbol{z}\cdot\boldsymbol{n}=0$ on $\partial\Omega$,
		\[
		(\boldsymbol{F}\cdot\boldsymbol{n})\,q|_{\Gamma_{\mathrm{in}}}
		=(\boldsymbol{F}\cdot\boldsymbol{n})\nabla_{\Gamma_{\mathrm{in}}}\cdot\boldsymbol{g}
		+\bigl(D\boldsymbol{F}\,\boldsymbol{g}-D_{\Gamma}\boldsymbol{g}\,\boldsymbol{F}_{\Gamma}\bigr)\cdot\boldsymbol{n}
		=C(\boldsymbol{g}).
		\]
		Since $C(\boldsymbol{g})=0$, we have $q|_{\Gamma_{\mathrm{in}}}=0$ wherever
		$\boldsymbol{F}\cdot\boldsymbol{n}\neq0$. The scalar transport problem
		$(\boldsymbol{F}\cdot\nabla)q=0$ in $\Omega$, $q|_{\Gamma_{\mathrm{in}}}=0$,
		together with the \emph{$\Omega$-filling} property, gives $q\equiv0$. Hence
		$\nabla\cdot\boldsymbol{u}=0$.
		
		For general data, the same conclusion follows by density: approximate
		$(\boldsymbol{z},\boldsymbol{g})$ by smooth data with $C(\boldsymbol{g}_{n})\to0$,
		apply the smooth result to $\boldsymbol{u}_{n}$, and pass to the limit using
		the estimate \eqref{eq:general-estimate} (which controls $q_{n}$ in terms of
		$C(\boldsymbol{g}_{n})$ and $\boldsymbol{z}_{n}$).
		
		Combining necessity and sufficiency, $C(\boldsymbol{g})=0$ is necessary and
		sufficient for the solution of the transport problem to be divergence-free.
		
		\paragraph{Step 4: Uniqueness.}
		Let $\boldsymbol{u}_{1}$ and $\boldsymbol{u}_{2}$ be two solutions of
		\eqref{eq:div-curl} with the same data $(\boldsymbol{z},\boldsymbol{g})$. By
		Step~1 both solve \eqref{eq:transport}. Their difference
		$\boldsymbol{w}:=\boldsymbol{u}_{1}-\boldsymbol{u}_{2}$ solves
		\eqref{eq:transport} with $\boldsymbol{z}=0$ and $\boldsymbol{g}=0$. Applying
		the estimate \eqref{eq:general-estimate} to $\boldsymbol{w}$,
		\[
		\|\boldsymbol{w}\|_{\mathbb{H}}^{2}+\|D\boldsymbol{w}\,\boldsymbol{F}\|_{\mathbb{H}}^{2}
		\le C\bigl(0+0\bigr)=0.
		\]
		Hence $\boldsymbol{w}=0$ in $\Omega$. The solution is unique.
	\end{proof}
	
	\begin{lemma}
		\label{lem:change-of-variables} Let $\Omega\subset\mathbb{R}^{d}$ have $%
		C^{1} $ boundary and let $\boldsymbol{F}\in[C^{2}(\overline{\Omega})]^{d}$
		with $\nabla\cdot\boldsymbol{F}=0$ in $\Omega$. Assume that the flow $(t, 
		\boldsymbol{x}_{0})\mapsto\boldsymbol{X}(t,\boldsymbol{x}_{0})$ generated by 
		$\boldsymbol{F}$ is \emph{$\Omega$-filling} in the sense of Definition~\ref%
		{Definition-Omega-Filing}. Then the map 
		\begin{equation*}
			(t,\boldsymbol{x}_{0})\longmapsto\boldsymbol{X}(t,\boldsymbol{x}_{0}),
			\qquad (t,\boldsymbol{x}_{0})\in\mathcal{D} = \bigl\{(t,\boldsymbol{x}_{0}): 
			\boldsymbol{x}_{0}\in\Gamma_{\mathrm{in}},\ 0<t<\tau(\boldsymbol{x}_{0}) %
			\bigr\},
		\end{equation*}
		is a $C^{1}$-diffeomorphism onto $\Omega\setminus\mathcal{N}$ for some $%
		\mathcal{N}\subset\Omega$ with $|\mathcal{N}|=0$, and for every $f\in
		L^{1}(\Omega)$, 
		\begin{equation}  \label{eq:change-of-variables}
			\int_{\Omega}f(\boldsymbol{x})\,\mathrm{d}\boldsymbol{x} = \int_{\Gamma_{ 
					\mathrm{in}}}\int_{0}^{\tau(\boldsymbol{x}_{0})} f(\boldsymbol{X}(t, 
			\boldsymbol{x}_{0}))\, \bigl|\boldsymbol{F}(\boldsymbol{x}_{0})\cdot 
			\boldsymbol{n}(\boldsymbol{x}_{0})\bigr|\, \mathrm{d}t\,\mathrm{d}\sigma( 
			\boldsymbol{x}_{0}).
		\end{equation}
	\end{lemma}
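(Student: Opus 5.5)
We must prove Lemma~\ref{lem:change-of-variables}: the map $\Phi(t,\boldsymbol{x}_0):=\boldsymbol{X}(t,\boldsymbol{x}_0)$ is a $C^1$-diffeomorphism from $\mathcal{D}$ onto $\Omega\setminus\mathcal{N}$, and it satisfies the change-of-variables formula with Jacobian $|\boldsymbol{F}(\boldsymbol{x}_0)\cdot\boldsymbol{n}(\boldsymbol{x}_0)|$. The plan is to parametrize $\Gamma_{\mathrm{in}}$ locally by $C^1$ charts $\boldsymbol{x}_0=\boldsymbol{\psi}(\boldsymbol{s})$, $\boldsymbol{s}\in U\subset\mathbb{R}^{d-1}$, so that $\Phi$ becomes a map of $d$ real variables $(t,\boldsymbol{s})$. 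We then compute its Jacobian determinant, use Liouville's formula to show the determinant is independent of $t$, and conclude with the area formula together with the injectivity of Lemma~\ref{lem:flow-injective} and the $\Omega$-filling property.

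\textbf{Step 1: regularity and bijectivity.} $\boldsymbol{F}$ is extended to a compactly supported $C^2$ field on $\mathbb{R}^d$, so $\boldsymbol{X}\in C^1$ (indeed $C^2$ in $\boldsymbol{x}_0$ by differentiable dependence), and hence $\Phi\in C^1(\mathcal{D})$. Injectivity on $\mathcal{D}$ is Lemma~\ref{lem:flow-injective}. Surjectivity onto $\Omega\setminus\mathcal{N}$ is exactly the $\Omega$-filling property, after intersecting $\mathcal{N}$ with the image complement. One point must be checked: a point written as $\boldsymbol{X}(\tau',\boldsymbol{x}_0)$ with $\tau'$ as in Definition~\ref{Definition-Omega-Filing} lies in $\mathcal{D}$, i.e.\ $\tau'<\tau(\boldsymbol{x}_0)$. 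This holds because the trajectory stays in $\Omega$ on $(0,\tau']$, and the point itself is in $\Omega$.

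\textbf{Step 2: Jacobian.} For $\partial\Phi/\partial(t,\boldsymbol{s})=[\boldsymbol{F}(\boldsymbol{X}),\,D_{\boldsymbol{x}_0}\boldsymbol{X}\,\partial_{\boldsymbol{s}}\boldsymbol{\psi}]$ we use $\boldsymbol{F}(\boldsymbol{X}(t,\boldsymbol{x}_0))=D_{\boldsymbol{x}_0}\boldsymbol{X}(t,\boldsymbol{x}_0)\boldsymbol{F}(\boldsymbol{x}_0)$, obtained by differentiating $\boldsymbol{X}(t+h,\boldsymbol{x}_0)=\boldsymbol{X}(t,\boldsymbol{X}(h,\boldsymbol{x}_0))$ in $h$. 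This gives
\[
J(t,\boldsymbol{s})=\det D_{\boldsymbol{x}_0}\boldsymbol{X}(t,\boldsymbol{x}_0)\cdot\det[\boldsymbol{F}(\boldsymbol{x}_0),\partial_{s_1}\boldsymbol{\psi},\dots,\partial_{s_{d-1}}\boldsymbol{\psi}].
\]
By Liouville's formula, $\partial_t\det D\boldsymbol{X}=(\nabla\cdot\boldsymbol{F})(\boldsymbol{X})\det D\boldsymbol{X}=0$, so $\det D\boldsymbol{X}\equiv1$. The second determinant equals $(\boldsymbol{F}\cdot\boldsymbol{n})\,|\partial_{\boldsymbol{s}}\boldsymbol{\psi}|_{\wedge}$ up to sign, where $|\partial_{\boldsymbol{s}}\boldsymbol{\psi}|_{\wedge}\,\mathrm{d}\boldsymbol{s}=\mathrm{d}\sigma$ is the surface element. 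Since $\boldsymbol{F}\cdot\boldsymbol{n}<0$ on $\Gamma_{\mathrm{in}}$, we get $J\neq0$. The inverse function theorem then makes $\Phi$ a local $C^1$-diffeomorphism, and with injectivity a global $C^1$-diffeomorphism onto its open image.

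\textbf{Step 3: integration.} $\mathcal{D}$ is covered by countably many chart pieces, and Lebesgue's change of variables is applied in each, first for $f\ge0$ (monotone convergence), then for $f\in L^1$ by splitting $f=f^+-f^-$. Since $|\mathcal{N}|=0$, the left side is unaffected. The main obstacle is measurability and openness of $\mathcal{D}$, i.e.\ lower semicontinuity of the exit time $\tau$ near grazing points where $\boldsymbol{F}\cdot\boldsymbol{n}\to0$. This is handled by noting that $\Gamma_{\mathrm{in}}$ is relatively open, that the set $\{(t,\boldsymbol{x}_0): \boldsymbol{X}(s,\boldsymbol{x}_0)\in\Omega\ \forall s\in(0,t]\}$ is open by continuity of $\boldsymbol{X}$ and compactness of $[\delta,t]$, and that transversality at $\Gamma_{\mathrm{in}}$ controls small $s$.
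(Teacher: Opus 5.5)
Your proposal is correct and follows essentially the same route as the paper: local charts of $\Gamma_{\mathrm{in}}$, then Liouville's formula with $\nabla\cdot\boldsymbol{F}=0$ to show that the Jacobian of $(t,\boldsymbol{s})\mapsto\boldsymbol{X}(t,\boldsymbol{\psi}(\boldsymbol{s}))$ is constant in $t$ and equals $|\boldsymbol{F}\cdot\boldsymbol{n}|$ times the surface element, and finally injectivity plus $\Omega$-filling and a chartwise change of variables. The differences are minor: you factor the Jacobian matrix as $D_{\boldsymbol{x}_0}\boldsymbol{X}(t,\boldsymbol{x}_0)$ times its value at $t=0$, whereas the paper applies Liouville directly to the matrix whose columns all solve the variational equation, and your extra checks (openness of $\mathcal{D}$, and that $\Omega$-filling representations lie in $\mathcal{D}$) supply details the paper leaves implicit.
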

	
	\begin{proof}
		Since the flow $\boldsymbol{X}$ is \emph{$\Omega$-filling}, and it is injective by Lemma \ref{lem:flow-injective}, the flow map
		$\boldsymbol{X}$ is a bijection from $\mathcal{D}$ onto
		$\Omega\setminus\mathcal{N}$ for some $\mathcal{N}\subset\Omega$
		with $|\mathcal{N}|=0$, and
		$\tau(\boldsymbol{x}_{0})\le T_{\mathrm{fill}}$ for
		$\sigma$-a.e.\ $\boldsymbol{x}_{0}\in\Gamma_{\mathrm{in}}$.
		
		Fix a local $C^{1}$ parametrization
		$\boldsymbol{x}_{0}=\boldsymbol{\gamma}(s)$,
		$s\in U\subset\mathbb{R}^{d-1}$, of $\Gamma_{\mathrm{in}}$, and set
		\[
		\boldsymbol{W}(t,s)
		:=
		\nabla_{(t,s)}\boldsymbol{X}(t,\boldsymbol{\gamma}(s))
		=
		\begin{bmatrix}
			\partial_{t}\boldsymbol{X} &
			\partial_{s_{1}}\boldsymbol{X} &
			\cdots &
			\partial_{s_{d-1}}\boldsymbol{X}
		\end{bmatrix},
		\]
		where the derivatives are evaluated at $(t,\boldsymbol{\gamma}(s))$.
		The first column is
		$\partial_{t}\boldsymbol{X}=\boldsymbol{F}(\boldsymbol{X})$.
		Differentiating the flow equation with respect to $s_{i}$ gives
		\[
		\partial_t\partial_{s_{i}}\boldsymbol{X}
		=
		D\boldsymbol{F}(\boldsymbol{X})\,\partial_{s_{i}}\boldsymbol{X},
		\qquad
		i=1,\dots,d-1,
		\]
		and similarly
		\[
		\partial_t\boldsymbol{F}(\boldsymbol{X})
		=
		D\boldsymbol{F}(\boldsymbol{X})\,\boldsymbol{F}(\boldsymbol{X}).
		\]
		Thus every column of $\boldsymbol{W}(t,s)$ satisfies the linear system
		\[
		\boldsymbol{Y}'(t)
		=
		D\boldsymbol{F}(\boldsymbol{X}(t,\boldsymbol{\gamma}(s)))\,\boldsymbol{Y}(t).
		\]
		By Liouville's formula,
		\[
		\partial_t\det\boldsymbol{W}(t,s)
		=
		\operatorname{tr}\!\bigl(D\boldsymbol{F}(\boldsymbol{X}(t,\boldsymbol{\gamma}(s)))\bigr)\,
		\det\boldsymbol{W}(t,s)
		=
		(\nabla\cdot\boldsymbol{F})(\boldsymbol{X}(t,\boldsymbol{\gamma}(s)))\,
		\det\boldsymbol{W}(t,s).
		\]
		Since $\nabla\cdot\boldsymbol{F}=0$ in $\Omega$, by Liouville's Theorem \cite[Proposition 7.1]{majda2006nonlinear}
		\[
		\det\boldsymbol{W}(t,s)=\det\boldsymbol{W}(0,s)
		\qquad\text{for all }t.
		\]
		
		At $t=0$,
		\[
		\boldsymbol{W}(0,s)=
		\begin{bmatrix}
			\boldsymbol{F}(\boldsymbol{\gamma}(s)) &
			\partial_{s_{1}}\boldsymbol{\gamma}(s) &
			\cdots &
			\partial_{s_{d-1}}\boldsymbol{\gamma}(s)
		\end{bmatrix}.
		\]
		Decompose
		$\boldsymbol{F}=\boldsymbol{F}_{\tau}+(\boldsymbol{F}\cdot\boldsymbol{n})\,\boldsymbol{n}$,
		where $\boldsymbol{F}_{\tau}$ is tangent to $\Gamma_{\mathrm{in}}$ at
		$\boldsymbol{\gamma}(s)$.
		By multilinearity of the determinant and the fact that the tangential vectors
		$\partial_{s_{i}}\boldsymbol{\gamma}(s)$ span the tangent space,
		\[
		\bigl|\det\boldsymbol{W}(0,s)\bigr|
		=
		\bigl|\boldsymbol{F}(\boldsymbol{\gamma}(s))\cdot\boldsymbol{n}(\boldsymbol{\gamma}(s))\bigr|\,
		\left|
		\det\begin{bmatrix}
			\boldsymbol{n}(\boldsymbol{\gamma}(s)) &
			\partial_{s_{1}}\boldsymbol{\gamma}(s) &
			\cdots &
			\partial_{s_{d-1}}\boldsymbol{\gamma}(s)
		\end{bmatrix}\right|.
		\]
		Let $\boldsymbol{G}(s)$ be the $(d-1)\times(d-1)$ metric matrix with entries
		$G_{ij}(s)=\partial_{s_{i}}\boldsymbol{\gamma}(s)\cdot\partial_{s_{j}}\boldsymbol{\gamma}(s)$.
		Then
		\[
		\left|
		\det\begin{bmatrix}
			\boldsymbol{n}(\boldsymbol{\gamma}(s)) &
			\partial_{s_{1}}\boldsymbol{\gamma}(s) &
			\cdots &
			\partial_{s_{d-1}}\boldsymbol{\gamma}(s)
		\end{bmatrix}\right|
		=
		\sqrt{\det\boldsymbol{G}(s)}.
		\]
		Hence
		\[
		\bigl|\det\boldsymbol{W}(t,s)\bigr|
		=
		\bigl|\det\boldsymbol{W}(0,s)\bigr|
		=
		\bigl|\boldsymbol{F}(\boldsymbol{\gamma}(s))\cdot\boldsymbol{n}(\boldsymbol{\gamma}(s))\bigr|\,
		\sqrt{\det\boldsymbol{G}(s)}.
		\]
		On $\Gamma_{\mathrm{in}}$ we have $\boldsymbol{F}\cdot\boldsymbol{n}<0$, so
		$|\boldsymbol{F}\cdot\boldsymbol{n}|>0$, and by regularity
		$\det\boldsymbol{G}(s)>0$.
		Therefore $\det\boldsymbol{W}(t,s)\neq 0$ on $\mathcal{D}$.
		
		Thus the flow map is a $C^{1}$ local diffeomorphism at every point of
		$\mathcal{D}$. Since it is a bijection,  it is a global
		$C^{1}$-diffeomorphism from $\mathcal{D}$ onto
		$\Omega\setminus\mathcal{N}$.
		
		Finally, apply the change-of-variables theorem on each boundary chart and use
		$\mathrm{d}\sigma(\boldsymbol{x}_{0})=\sqrt{\det\boldsymbol{G}(s)}\,\mathrm{d}s$,
		then sum over charts (using a finite or countable atlas and a partition of unity).
		Since $|\mathcal{N}|=0$, this yields \eqref{eq:change-of-variables}.
	\end{proof}
	\section{Concluding remarks and open questions}
	
	\label{sec:conclusions}
	
	\begin{itemize}
		
		\item \emph{Boundary conditions.}
		The absolute boundary conditions are not used crucially beyond giving a
		meaning to the divergence-free Hodge--Laplacian space and to the inflow
		trace. Other Hodge-type realizations (relative, mixed, or perfect-conductor
		versus insulating walls) could be considered, provided the curl--div system \eqref{eq:div-curl}
		remains well posed and the inflow trace estimate of
		Lemma~\ref{trace-estimate} holds.
		
		\item \emph{Full nonlinear induction equation.}
		The analysis is restricted to the linearized perturbation problem (system \eqref{b-system}) around
		$(\boldsymbol{b},\boldsymbol{v})=(0,0)$. The full induction equation \eqref{eq:induction} is an
		inverse \emph{coefficient} problem rather than an inverse \emph{source}
		problem, and extending the uniqueness and stability results to the
		nonlinear setting even locally, for small data and short times,	remains open.
		
		\item \emph{Stability for internal observations.}
		The quantitative stability estimate is proved only for the final-time
		observation in Theorem \eqref{thm:lip-stab}. Proving a stability estimate for internal observations on
		$(0,T)\times\omega_{\mathrm{obs}}$, and identifying the sharp modulus
		(Lipschitz, H\"older or logarithmic), is an open problem. Whether several
		partial observations can be combined to improve the modulus is also
		unresolved.
		
		\item \emph{Compatibility and admissible data.}
		The curl--div system is overdetermined and requires
		$\mathcal{C}(\boldsymbol{g})=0$ on $\Gamma_{\mathrm{in}}$ (see \eqref{C(g,h)}). In practice the
		inflow trace is measured and may not satisfy this condition exactly.
		Constructing a stable projection onto the admissible set and quantifying
		the error propagation into the reconstruction is an open problem, closely
		related to the feasibility issue in the two-stage numerical method.

		\item \emph{Role of the inflow trace and the gauge.}
		Magnetic observations alone are blind to the velocity component parallel
		to $\boldsymbol{F}$. The inflow trace fixes this gauge, but it would be
		interesting to determine weaker or alternative data (e.g.\ partial traces,
		flux measurements, or additional magnetic observations) that still remove
		the ambiguity.
		
		\item \emph{The separable ansatz.}
		The analysis assumes $\boldsymbol{v}(t,\boldsymbol{x})
		=\alpha(t)\boldsymbol{u}(\boldsymbol{x})$ with a known nonvanishing
		$\alpha\in L^{2}(0,T)$. Relaxing the separability, or treating the time
		profile as unknown, leads to a genuinely time-dependent inverse problem
		that is not covered here.
		
		\item \emph{Time-dependent background fields.}
		The background $\boldsymbol{F}$ is assumed time-independent. In the same manner, our result can be generalized to the case $\boldsymbol{F}(t,\boldsymbol{x})$. 
		
		\item As in \cite{mokhtari2026velocity}, we can treat the case
		$\Omega=\mathbb{R}^{3}$ in the same strategy used in this paper.

	\end{itemize}

	\bibliographystyle{unsrt}
	\bibliography{biblio}
	
\end{document}